\documentclass[11pt,reqno]{article}

\usepackage[top=2.5cm, bottom=2.5cm, left=2.5cm, right=2.5cm]{geometry}	

\RequirePackage{amsthm,amsmath,amsfonts,amssymb}
\RequirePackage[numbers]{natbib}
\RequirePackage[colorlinks,citecolor=blue,urlcolor=blue]{hyperref}
\RequirePackage{graphicx}



\newtheorem{theorem}{Theorem}
\newtheorem{corollary}{Corollary}

\newtheorem{lemma}{Lemma}

\theoremstyle{remark}

\newtheorem{example}{Example}
\newtheorem{remark}{Remark}


\usepackage{bm} 
\usepackage{color}
\usepackage[dvipsnames]{xcolor}
\usepackage{comment}

\newcommand*\pFq[2]{{}_{#1}F_{#2}}

\renewcommand{\cal}[1]{\mathcal{#1}}

\renewcommand{\r}{\mathbb{R}}

\newcommand{\mmag}[1]{\left|#1\right|}

\newcommand{\Ebb}[1]{\mathbb{E}\left[#1\right]}
\newcommand{\Pbb}[1]{\mathbb{P}\left(#1\right)}

\hypersetup{
  colorlinks   = true, 
  urlcolor     = {green!50!black}, 
  linkcolor    = {green!50!black}, 
  citecolor   = {green!50!black} 
}

\title{Product-form estimators: exploiting independence to scale up Monte Carlo\thanks{JK and AMJ acknowledge support from
the EPSRC (grant \# EP/T004134/1) and the Lloyd's Register Foundation Programme on Data-Centric Engineering at the Alan Turing Institute. FRC acknowledges support from the EPSRC and the MRC OXWASP Centre for Doctoral Training (grant \# EP/L016710/1). FRC and AMJ acknowledge further support  from the EPSRC (grant \#  EP/R034710/1).}}

\author{
  Juan Kuntz\thanks{Department of Statistics, University of Warwick, Coventry, CV4 7AL, UK. Email: $\{$juan.kuntz-nussio, francesca.crucinio, a.m.johansen$\}$@warwick.ac.uk} \thanks{Alan Turing Institute, 96 Euston Road, London, NW1 2DB, UK.}
  \and
  Francesca R. Crucinio\footnotemark[2]
  \and
  Adam M. Johansen\footnotemark[2] \footnotemark[3]
}

\begin{document}

\maketitle

\begin{abstract}

We introduce a class of Monte Carlo estimators that aim to overcome the rapid growth of variance with dimension often observed for standard estimators by exploiting the target's independence structure. 
We identify the most basic incarnations of these estimators with a class of generalized U-statistics, and thus establish their unbiasedness, consistency, and asymptotic normality. 
Moreover, we show that they obtain the minimum possible variance amongst a broad class of estimators; and we investigate their computational cost and delineate the settings in which they are most efficient.  We exemplify the merger of these estimators with other well-known Monte Carlo estimators so as to better adapt the latter to the target's independence structure and improve their performance. We do this via three simple mergers: one with importance sampling, another with importance sampling squared, and a final one with pseudo-marginal Metropolis-Hasting. In all cases, we show that the resulting estimators are well-founded and achieve lower variances than their standard counterparts. Lastly, we illustrate the various variance reductions through several examples.

\end{abstract}

\section{Introduction}\label{sec:intro}
Monte Carlo methods are sometimes said to overcome the curse of dimensionality because, regardless of the target's dimension, their rates of convergence are square root in the number of samples drawn. In practice, however, one encounters several problems when computing high-dimensional integrals using Monte Carlo, prominent among which is the issue that the constants present in the convergence rates typically grow rapidly with the target's dimension. Hence, even if we are able to draw independent samples from a high-dimensional target, the number of  samples necessary to obtain estimates of a satisfactory accuracy is often prohibitively large~\cite{Silverman1986,Snyder2008,Bengtsson2008,Agapiou2017}.  However, many of these targets possess strong independence structures (e.g.\ see \cite{Gelman2006,Gelman2006a,Koller2009,hoffman2013stochastic,Blei2003} and the many references therein). In this paper, we investigate whether the rapid growth of these constants  can be mitigated by exploiting these structures.

Variants of the following toy example are sometimes given to illustrate the issue~(e.g.\ \cite[p.95]{chopin2020}).
Let $\mu$ be a $K$-dimensional  isotropic Gaussian distribution   with unit means and variances, and consider the basic Monte Carlo estimator for the mean ($\mu(\varphi)=1$) of the product ($\varphi(x):=x_1x_2\ldots x_K$) of its components ($x_1,\ldots, x_K$):
\begin{equation}\label{eq:fney8safbay8fa}\mu^N(\varphi):= \frac{1}{N}\sum_{n=1}^N\varphi(X_1^n,\dots ,X_K^n)=\frac{1}{N}\sum_{n=1}^NX_1^n\cdots X_K^n,\end{equation}
where $(X_1^n,\dots,X_K^n)_{n=1}^N$ denote i.i.d.\ samples drawn from $\mu$. Because the estimator's asymptotic variance equals $2^K-1$, the number of samples required to obtain a reasonable estimate of $\mu(\varphi)$   grows exponentially with the target's dimension. Hence, it is impractical to use $\mu^N(\varphi)$ if $K$ is even modestly large. For instance, if $K=20$, we would require $\approx 10^{10}$ samples to obtain an estimate with standard deviation of $0.01=1\%\mu(\varphi)$, reaching the limits of most present-day personal computers, and if $K=30$, we would require $ \approx 10^{13}$ samples, exceeding these limits. 

There is, however, a trivial way of overcoming  the issue for the above example that does not require any knowledge about $\mu$ beyond the fact that it is product-form. 
Because $\mu$ is the product $\mu_1\times\dots\times\mu_K$ of $K$ univariate unit-mean-and-variance Gaussian distributions $\mu_1,$ $\dots,\mu_K$ and $\varphi$ is the product $\varphi_1\cdots\varphi_K$ of $K$ univariate functions $\varphi_1(x_1)=x_1,\dots,\varphi_K(x_K)=x_K$,  we can express $\mu(\varphi)$  as the product $\mu_1(\varphi_1)  \cdots \mu_K(\varphi_K)$ of the corresponding $K$ univariate means $\mu_1(\varphi_1),\dots,\mu_K(\varphi_K)$. As we will see in Section~\ref{sec:kproducts}, estimating each of these means separately and taking the resulting product, we obtain an estimator for $\mu(\varphi)$ whose asymptotic variance is $K$:
\begin{align}\label{eq:intrvar}\mu^N_\times(\varphi):=& \frac{1}{N^K}\sum_{n_1=1}^N\dots\sum_{n_K=1}^N\varphi(X_1^{n_1},\dots,X_K^{n_K})=
  \left(\frac{1}{N}\sum_{n_1=1}^NX_1^{n_1}\right) \cdots\relax  \left(\frac{1}{N}\sum_{n_K=1}^N X_K^{n_K}\right).\end{align}
Consequently, the number of samples necessary for $\mu^N_\times(\varphi)$ to yield a reasonable estimate of $\mu(\varphi)$ only grows  linearly with the dimension, allowing us to practically deal with $K$s in the millions.

The middle term of~\eqref{eq:intrvar} makes sense regardless of whether $\varphi$ is the product of univariate test functions. It defines a type  of (unbiased, consistent, and asymptotically normal) Monte Carlo estimators for general $\varphi$ and product-form $\mu$ which   we refer to as \emph{product-form estimators}. Their salient feature  is that they achieve lower variances than the standard estimator~\eqref{eq:fney8safbay8fa} given the same number of samples from the target. The reason behind the variance reduction is simple:  if $(X^n_1)_{n=1}^N$,$\dots$, $(X^n_K)_{n=1}^N$ are independent sequences of samples respectively drawn  from $\mu_1,\dots,\mu_K$, then every `permutation' of these samples has  law $\mu$, that is,

\begin{equation}\label{eq:permute}(X^{n_1}_1,\dots,X^{n_K}_K)\sim\mu \quad\forall n_1,\dots,n_K\leq N.\end{equation}
Hence, $\mu^N_\times(\varphi)$ in~\eqref{eq:intrvar} averages over $N^K$ tuples with law $\mu$ while its conventional counterpart~\eqref{eq:fney8safbay8fa} only averages over $N$ such tuples. This increase in tuple number leads to a decrease in estimator variance and we say that the product-form estimator is more \emph{statistically efficient} than the standard one. Moreover, obtaining these $N^K$ tuples does not require drawing any further samples from $\mu$ and, in this sense, product-form estimators make the most out of every sample available (indeed, we will show in Theorem~\ref{thrm:mvu} that they are minimum variance unbiased estimators, or MVUEs, for product-form targets). However, in contrast with the tuples in~\eqref{eq:fney8safbay8fa}, those in~\eqref{eq:intrvar} are not independent (the same components are repeated across several tuples). For this reason, product-form estimators achieve the same $\cal{O}(N^{-1/2})$ rate of convergence  that the standard ones do and  the variance reduction materializes only in lower proportionality constants (i.e.\ $\lim_{N\to\infty}\text{Var}(\mu^N_\times(\varphi))/\text{Var}(\mu^N(\varphi))=C$ for some constant $C\leq1$).

The space complexity of product-form estimators scales linearly with dimension: to utilize all $N^K$ permuted tuples in~\eqref{eq:intrvar} we need only store $KN$ numbers ($X^1_1,\ldots, X^N_1; \ldots; X^1_K, \ldots, X^N_K$). However, in the absence of any sort of special structure in the test functions, their time complexity  scales exponentially with dimension: brute-force computation of  the sum in~\eqref{eq:intrvar} requires\footnote{\textbf{On our  $\cal{O}$ notation:} The exact dependence on dimension of the estimators' evaluation costs depends on that of the test function $\varphi$. Hence, when discussing a generic  $\varphi$, we say that the estimator's evaluation cost is $\cal{O}(N^d)$ for some $d$ to mean that it is $\cal{O}(f(K)N^d)$ for some unspecified factor $f(K)$ that accounts for $\varphi$'s evaluation cost. When discussing classes of $\varphi$ for which this factor is clear, we specify it. For example, we say that evaluation cost of the rightmost term in~\eqref{eq:intrvar} is $\cal{O}(KN)$ rather than $\cal{O}(N)$.} $\mathcal{O}(N^K)$ operations. Consequently, the use of product-form estimators for general $\varphi$ proves to be a balancing act in which one must weigh the cost of acquiring new samples from $\mu$ (be it a computational one if the samples are obtained from simulations, or a real-life one if they are obtained from experiments) against the extra overhead required to evaluate these estimators, and it is limited to $K$s no greater than ten.

If, however, the test function $\varphi$ also possesses some `product structure', then $\mu_\times^N(\varphi)$ can often be evaluated in far fewer than $\cal{O}(N^K)$ operations. The most extreme examples of such $\varphi$ are functions that factorize fully and sums thereof (which we refer to as `sums of products' or `SOPs'), for which the evaluation cost is easily lowered to just $\mathcal{O}(KN)$. For instance, in the case of the toy Gaussian example above, we can evaluate the product-form estimator in $\cal{O}(KN)$ operations by expressing it as the product of the component-wise sample averages and computing each average separately (i.e.\ using the final expression in~\eqref{eq:intrvar}). This cheaper approach just amounts to a dimensionality reduction technique: we re-write a high-dimensional integral as a polynomial of low-dimension integrals, estimate each of low-dimension integral separately, and plug the estimates back into the polynomial to obtain an estimate of the original integral. More generally, if the test function can be expressed as a sum of partially-factorized functions, it is often possible to lower the cost to $\cal{O}(N^d)$ where $d< K$ depends on the amount of factorization, and taking this approach also amounts to a type of dimensionality reduction (this time featuring nested integrals).

This paper has two goals. First, to provide a comprehensive theoretical characterization of product-form estimators. Second, to illustrate their use for non-product targets when combined with, or embedded within, other more sophisticated Monte Carlo methodology. It is in these settings, where  product-form estimators are deployed to tackle the aspects of the problem exhibiting product structure or conditional independences, that we believe these estimators find their greatest use.  To avoid unnecessary technical distractions, and in the interest of accessibility, we achieve the second goal using simple examples. While we anticipate that the most useful such combinations or embeddings will not to be so simple, we believe that the underlying ideas and guiding principles will be the same.

\paragraph*{Relation to the literature.}

$\enskip$ In their basic form, product-form estimators~\eqref{eq:intrvar} are a subclass of generalized U-statistics (see \cite{Lee1990,korolyuk2013theory} for comprehensive surveys): multisample U-statistics with `kernels' $\varphi$ that take as arguments a \emph{single} sample per distribution for several distributions $(K>1)$. Even though product-form estimators are unnatural examples of U-statistics because the original unisample U-statistics~\cite{hoeffding1948} fundamentally involve symmetric kernels that take as arguments multiple samples from a single distribution ($K=1$), the methods used to study either of these overlap significantly. The arguments required in the basic product-form case are simpler than those necessary for the most general case (multiple samples from multiple distributions) and, by focusing on the results that are of greatest interest from the Monte Carlo perspective, we are able to present readily accessible, intuitive, and compact proofs for the theoretical properties of~\eqref{eq:intrvar}. This said, whenever a result given here can be extracted from the U-statistics literature, we provide explicit references.

While U-statistics have been extensively studied since Hoeffding's seminal work~\cite{hoeffding1948} and are commonly employed in a variety of statistical tests (e.g.\ independence tests \cite{Hoeffding1948a}, two-sample tests~\cite{Gretton2012}, goodness-of-fit tests~\cite{Liu2016}, and more \cite{Lee1990,Kowalski2007}) and learning tasks (e.g.\ regression~\cite{Kowalski2007}, classification~\cite{Clemencon2008}, clustering~\cite{clemencon2011u}, and more~\cite{Clemencon2008,Clemencon2016}) where they arise as natural estimators, their use in Monte Carlo seems underexplored. Exceptions include \cite{owen2009} which cleverly applies unisample U-statistics  to  make the best possible use of a collection of genuine (and hence expensive to obtain and store) uniform random variables and  \cite{hall1987estimation} that uses them to obtain improved estimates for the integrated squared derivatives of a density.
 
Product-form estimators themselves can be found peppered throughout the Monte Carlo literature, with one exception (see below), always unnamed and specialized to particular contexts.  First off, in the simplest setting of integrating fully-factorized functions with respect to product-form measures, it is of course well-known   that better performance is obtained by separately approximating the marginal integrals and taking their product (although, we have yet to locate full variance expressions quantifying quite how much better,  even for this near-trivial case). Beyond this case, product-form estimators are found not in isolation but combined with other Monte Carlo methodology: \cite{Tran2013} embeds them within therein-defined Importance Sampling$^2$ (IS$^2$) to efficiently infer parameters of structured latent variable models, \cite{Schmon2021} employs them within pseudo-marginal MCMC to estimate intractable acceptance probabilities for similar models, \cite{lindsten2017divide,Kuntz2021} study their use within Sequential Monte Carlo (SMC), and~\cite{Aitchison2019} builds on them to obtain Tensor Monte Carlo (TMC), an extension of importance weighted variational autoencoders~\cite{Burda2016}. The latter article is the aforementioned exception: its author defines the estimators in general and refers to them as `TMC estimators', but does not study them theoretically. To the best of our knowledge, there has been no previous systematic exploration of the estimators~\eqref{eq:intrvar}, their theoretical properties, and uses, a gap we intend to fill here. 
Furthermore, while in simple situations with fully, or almost-fully, factorized test functions (e.g.\ those in \cite{Tran2013,Schmon2021}) it might be clear to most practitioners that employing a product-form estimator is the right thing to do, it may not be quite so immediately obvious how much of a difference this can make and that, in rather precise ways (cf.\ Theorems~\ref{thrm:mvu}~and~\ref{thrm:mvuppf}), judiciously using product-form estimators is the best thing one can do within Monte Carlo when tackling models with known independence structure but unknown conditional distributions (a common situation in practice). We aim to underscore these points through our analysis and examples.
 
Lastly, we remark that product-form estimators are reminiscent of classical product cubature rules~\cite{Stroud1971}. These are obtained by taking products of quadrature rules and, consequently, require computing sums over $N^K$ points much like for product-form estimators (except for fully, or partially, factorized test functions $\varphi$ where the cost can be similarly lowered, e.g.\ \cite[p.24]{Stroud1971}). In fact, the high computational cost incurred by these rules for general $\varphi$ partly motivated the development of more modern numerical integration techniques such as Quasi Monte Carlo~\cite{Dick2013}, spare grid methods~\cite{gerstner1998numerical,Gerstner2003}, and, of course, Monte Carlo itself. That said, we believe that these rules can be used to great effect if one is strategic in their application and the advent of the more modern methods has created many opportunities for such applications, something we aim to exemplify here using their Monte Carlo analogues: product-form estimators. 

\paragraph*{Paper structure.} $\enskip$
This paper is divided into two main parts (Sections~\ref{sec:motivation}~and~\ref{sec:extend}), each corresponding to one of our two aims, and a discussion of our results, future research directions, and potential applications (Section~\ref{sec:discussion}). 

Section~\ref{sec:motivation} studies product-form estimators and their theoretical properties. We show that the estimators are strongly consistent, unbiased, and asymptotically normal, and we give expressions for their finite sample and asymptotic variances (Section~\ref{sec:kproducts}). We argue that they are more  statistically efficient than their conventional counterparts in the sense that they achieve lower variances given the same number of samples (Section~\ref{sec:statseff}). Lastly, we consider their computational cost (Section~\ref{sec:compeff}) and explore the circumstances in which they prove most computationally efficient (Section~\ref{sec:lincost}). 

Section~\ref{sec:extend} gives simple examples illustrating how one may embed product-form estimators within standard Monte Carlo methodology and extend their use beyond product-form targets. In particular, we combine them with importance sampling and obtain estimators applicable to targets that are absolutely continuous with respect to product-form distributions (Section~\ref{sec:is}) and partially-factorized distributions (Section~\ref{sec:ppf}), and we consider their use within pseudo-marginal MCMC (Section~\ref{sec:pMH}). We then examine the numerical performance of these extensions on a simple hierarchical model (Section~\ref{sec:numex}).

The paper has six appendices. The first five contain proofs: Appendix~\ref{app:classicalproof} those for the basic properties of product-form estimators, Appendix~\ref{app:mvu} that for their MVUE property, Appendix~\ref{app:classicalK2} those for the basic properties of the `partially product-form' estimators introduced in Section~\ref{sec:ppf}, Appendix~\ref{app:ppfmvu} that for the latter's MVUE property, and Appendix~\ref{app:pmh} that for the statistical efficiency of the product-form pseudo-marginal MCMC estimators (vis-\`a-vis their non-product counterparts) in Section~\ref{sec:pMH}.
Appendix~\ref{sec:mop} contains an additional, simple extension of product-form estimators (to targets that are mixture of product-form distributions), omitted from the main text in the interest of brevity.

\section{Product-form estimators}\label{sec:motivation}

Consider the basic Monte Carlo problem: given  a probability distribution $\mu$ on a measurable space $(S,\cal{S})$ and a function $\varphi$ belonging to the space $L^2_\mu$ of square $\mu$-integrable real-valued functions on $S$, estimate the average
$$\mu(\varphi):=\int\varphi(x)\mu(dx).$$
Throughout this section, we focus on the question `by exploiting the product-form structure of a target $\mu$, can we design estimators of  $\mu(\varphi)$ that are more efficient than the usual ones?'. By product-form, we mean that $\mu$ is the product of $K>1$ distributions $\mu_1,\dots,\mu_K$ on measurable spaces $(S_1,\cal{S}_1),\dots,(S_K,\cal{S}_K)$ satisfying $S=S_1\times\dots\times S_K$ and $\cal{S}=\cal{S}_1\times\dots\times \cal{S}_K$, where the latter denotes the product sigma-algebra. Furthermore, if $A$ is a non-empty subset of $[K]:=\{1,\dots,K\}$, then we use
$\mu_A:=\prod_{k\in A}\mu_k$
to denote the product of the $\mu_k$s indexed by $k$s in $A$ and  $\mu_A(\varphi)$ to denote the measurable function on $\prod_{k\not\in A}S_k$ obtained by integrating the arguments of $\varphi$ indexed by $k$s in $A$ with respect to $\mu_A$:
$$\mu_A(\varphi)(x_{A^c}):=\int\varphi(x_A,x_{A^c})\mu_A(dx_A)\quad\forall x_{A^c}\in \prod_{k \in A^c}S_k,$$
under the assumption that this integral is well-defined for all $x_{A^c}$ in $\prod_{k\not\in A}S_k$, where $A^c:=[K]\backslash A$ denotes $A$'s complement. If $A$ is empty, we set $\mu_A(\varphi):=\varphi$.
\subsection{Theoretical characterisation}\label{sec:kproducts} \sloppy Suppose that we have at our disposal $N$ i.i.d.\ samples $X^1,\dots,X^N$ drawn from $\mu$. We can view these samples as $N$ tuples 
$$(X^1_1,\dots,X^1_K),\enskip \dots,\enskip (X^N_1,\dots,X^N_K)$$
of i.i.d.\ samples $X^1_1,\dots, X^N_1$, $\dots$, $X^1_K,\dots, X^N_K$ independently and respectively drawn from $\mu_1,\dots,\mu_K$. As we will see in Section~\ref{sec:statseff},  the \emph{product-form estimator},
\begin{equation}
\label{eq:prodestK}\mu_\times^N(\varphi):=\frac{1}{N^K}\sum_{\bm{n}\in[N]^K}\varphi(X^{\bm{n}})
\end{equation}
where $X^{\bm{n}}$ with $\bm{n}=(n_1,\dots,n_K)$ denotes the `permuted' tuple $(X^{n_1}_1,\dots,X^{n_K}_K)$ (i.e. a tuple obtained as one of the $N^K$ component-wise permutations of the original samples),  is a better estimator for $\mu(\varphi)$ than the conventional choice using the same samples,
\begin{equation}\label{eq:basicest}\mu^N(\varphi):=\frac{1}{N}\sum_{n=1}^N\varphi(X^n),\end{equation}
regardless of whether the test function $\varphi$ possesses any sort of product structure. The reason behind this is as follows: while the conventional estimator directly approximates the target with the samples' empirical distribution,
\begin{equation}\label{eq:upn}\mu\approx\frac{1}{N}\sum_{n=1}^N\delta_{X^n}=:\mu^N,\end{equation}
the product-form estimator instead first approximates the marginals $\mu_1,\dots,\mu_K$ of the target with the corresponding component-wise empirical distributions,
$$\mu^N_1:=\frac{1}{N}\sum_{n=1}^N\delta_{X^n_1},\enskip\dots,\enskip\mu^N_K:=\frac{1}{N}\sum_{n=1}^N\delta_{X^n_K},$$
and then takes the product of these to obtain an approximation of $\mu$,
\begin{align}\mu\approx&\prod_{k=1}^K\left(\frac{1}{N}\sum_{n=1}^N\delta_{X^n_k}\right)=\frac{1}{N^K}\sum_{\bm{n}\in[N]^K}\delta_{X^{\bm{n}}} =:\mu^N_\times.\label{eq:upnx}\end{align}
The built-in product structure in $\mu^N_\times$ makes it a better suited approximation to the product-form target $\mu$ than the non-product $\mu^N$. Before pursuing this further, we take a moment to show that $\mu_\times^N(\varphi)$ is a well-founded estimator for $\mu(\varphi)$ and obtain expressions for its variance.
\begin{theorem}
\label{thrm:classicalK}
If $\varphi$ is $\mu$-integrable, then $\mu^N_\times(\varphi)$ in~\eqref{eq:prodestK} is unbiased:
$$\Ebb{\mu^N_\times(\varphi)}=\mu(\varphi)\quad\forall N>0.$$
If, furthermore, $\varphi$ belongs to $L^2_\mu$, then $\mu_{A^c}(\varphi)$ belongs to $L^2_{\mu_{A}}$ for all subsets $A$ of $[K]$. The estimator's variance is given by
\begin{align}
&\text{Var}(\mu^N_\times(\varphi))=\sum_{\emptyset\neq A\subseteq [K]}\frac{1}{N^{\mmag{A}}}\sum_{B\subseteq A}(-1)^{\mmag{A}-\mmag{B}}\sigma_{A,B}^2(\mu_{A^c}(\varphi)),\label{eq:vark}
\end{align}
for every $N>0$, where $\mmag{A}$ and $\mmag{B}$ denote the cardinalities of $A$ and $B$ and
\begin{equation}\label{eq:sigab}\sigma_{A,B}^2(\psi):=\mu_B([\mu_{A\backslash B}(\psi)-\mu_A(\psi)]^2)\end{equation}
for all  $\psi$ in $L^2_{\mu_A}$ and $B\subseteq A\subseteq[K]$. Furthermore, $\mu^N_\times(\varphi)$ is strongly consistent and asymptotically normal: 
\begin{align}\lim_{N\to\infty}\mu^N_\times(\varphi)=& \mu(\varphi)\enskip\textrm{almost surely,}\label{eq:lln}\\
N^{1/2}[\mu^N_\times(\varphi)-\mu(\varphi)]\Rightarrow& \cal{N}(0,\sigma^2_{\times}(\varphi))\enskip\text{as $N\to\infty$},\label{eq:clt}\end{align}
where $\Rightarrow$ denotes convergence in distribution and $\sigma_{\times}^2(\varphi):=\sum_{k=1}^K\sigma^2_k(\varphi)$ with 
$$\sigma^2_k(\varphi):=\mu_k([\mu_{\{k\}^c}(\varphi)-\mu(\varphi)]^2)\quad\forall k\in[N].$$
\end{theorem}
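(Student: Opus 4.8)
The plan is to treat the five assertions in turn, in each case leaning on the fact that every permuted tuple $X^{\bm n}=(X^{n_1}_1,\dots,X^{n_K}_K)$ has law $\mu$, since its components are independent with $X^{n_k}_k\sim\mu_k$.

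\emph{Unbiasedness and $L^2$-membership.} Unbiasedness is immediate from linearity and this permutation-invariance: $\Ebb{\mu^N_\times(\varphi)}=N^{-K}\sum_{\bm n\in[N]^K}\Ebb{\varphi(X^{\bm n})}=N^{-K}\cdot N^K\mu(\varphi)=\mu(\varphi)$, with $\mu$-integrability guaranteeing each term is finite. For the $L^2$ claim, I would note $\varphi\in L^2_\mu\subseteq L^1_\mu$, so Fubini--Tonelli makes $\mu_{A^c}(\varphi)(x_A)$ well-defined for $\mu_A$-a.e.\ $x_A$; conditional Jensen then gives $\mmag{\mu_{A^c}(\varphi)(x_A)}^2\leq \mu_{A^c}(\varphi^2)(x_A)$, and integrating against $\mu_A$ yields $\norm{\mu_{A^c}(\varphi)}_{L^2_{\mu_A}}^2\leq\mu_A(\mu_{A^c}(\varphi^2))=\mu(\varphi^2)<\infty$.

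\emph{Variance.} This is the heart of the argument. I would expand $\text{Var}(\mu^N_\times(\varphi))=N^{-2K}\sum_{\bm n,\bm m}\text{Cov}(\varphi(X^{\bm n}),\varphi(X^{\bm m}))$ and observe that the covariance depends on the pair only through the \emph{coincidence set} $A:=\{k:n_k=m_k\}$. Conditioning on the components shared by the two tuples (those indexed by $A$), the remaining components split into two independent blocks drawn from $\mu_{A^c}$, whence $\Ebb{\varphi(X^{\bm n})\varphi(X^{\bm m})\mid X_A}=\mu_{A^c}(\varphi)(X_A)^2$ and the covariance equals $\mu_A([\mu_{A^c}(\varphi)-\mu(\varphi)]^2)$, finite by the previous step. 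Counting pairs with a prescribed coincidence set $A$ gives exactly $N^{\mmag A}(N(N-1))^{K-\mmag A}=N^K(N-1)^{K-\mmag A}$, which assembles the variance into the single-sum form $N^{-K}\sum_{A}(N-1)^{K-\mmag A}\mu_A([\mu_{A^c}(\varphi)-\mu(\varphi)]^2)$. To reconcile this with the stated~\eqref{eq:vark}, I would swap the order of summation in the double sum there, use $\mu_{A\backslash B}(\mu_{A^c}(\varphi))=\mu_{B^c}(\varphi)$ to recognise that $\sigma^2_{A,B}(\mu_{A^c}(\varphi))$ depends only on $B$, and collapse the resulting alternating sum via the identity $\sum_{C\subseteq B^c}(-1/N)^{\mmag C}=(1-1/N)^{K-\mmag B}$ to match the powers of $N$. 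I expect this bookkeeping, rather than any conceptual point, to be the main obstacle.

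\emph{Strong consistency and asymptotic normality.} Since the tuples are not independent, the elementary SLLN does not apply; instead I would use that $\mu^N_\times(\varphi)$ is a (multisample) U-statistic and hence a reverse martingale with respect to the decreasing family of symmetric $\sigma$-algebras, so that the reverse-martingale convergence theorem yields a.s.\ convergence and a Hewitt--Savage zero--one argument identifies the limit as $\mu(\varphi)$. For the CLT I would pass to the H\'ajek projection: the projection of $\mu^N_\times(\varphi)$ onto sums of single-coordinate functions is $\mu(\varphi)+\sum_{k=1}^K N^{-1}\sum_{n=1}^N[\mu_{\{k\}^c}(\varphi)(X^n_k)-\mu(\varphi)]$, a sum of $K$ independent centred averages whose rescaling converges to $\cal{N}(0,\sigma^2_\times(\varphi))$ with $\sigma^2_\times(\varphi)=\sum_k\sigma^2_k(\varphi)$ by the classical CLT. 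The variance formula then shows that the residual gathers only the coincidence sets with $\mmag A\geq 2$, whose contribution to the variance is $\cal{O}(N^{-2})$; hence the residual is $o_p(N^{-1/2})$ and Slutsky's theorem completes the proof.
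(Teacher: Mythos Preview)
Your proposal is correct; the main departures from the paper lie in the variance and consistency steps. For the variance, the paper does not expand the covariance by coincidence set; instead it writes $\mu^N_\times-\mu=\sum_{\emptyset\neq A}\bigl(\prod_{k\in A}(\mu^N_k-\mu_k)\bigr)\times\mu_{A^c}$ (a Hoeffding/ANOVA decomposition of the error), proves via a short conditioning argument that these summands are pairwise orthogonal, and then computes the $L^2$ norm of each using Lemma~\ref{lem:iidl2}. This yields \eqref{eq:vark} directly with no combinatorial reconciliation, and the same decomposition then does double duty: splitting off the $|A|=1$ terms gives exactly the sum of the $K$ independent marginal averages (your H\'ajek projection), while the remainder $R_N$ collects the $|A|\ge 2$ pieces, whose $L^2$ norm is $\cal{O}(N^{-1})$; Markov plus Borel--Cantelli then gives both \eqref{eq:lln} and the $o_p(N^{-1/2})$ bound needed for \eqref{eq:clt} in one stroke. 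Your route is more elementary for the variance---the coincidence-set counting avoids the orthogonality lemma altogether, and your observation that $\sigma^2_{A,B}(\mu_{A^c}(\varphi))$ depends only on $B$ makes the bookkeeping short---but it does not by itself isolate the remainder, so you reach for separate machinery (reverse-martingale SLLN, H\'ajek projection) for the limit theorems. Both are standard U-statistics devices and entirely sound; the paper's approach is slightly more economical in that a single structural decomposition drives all three conclusions.
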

As mentioned in Section~\ref{sec:intro}, product-form estimators are special cases of multisample U-statistics and Theorem~\ref{thrm:classicalK} can be pieced together from  results in the U-statistics literature, e.g.\ see  \cite[p.35]{korolyuk2013theory} for the unbiasedness, \cite[p.38]{korolyuk2013theory}  for the variance expressions,  \cite[Theorem 3.2.1]{korolyuk2013theory} for the consistency (which also holds for $\mu$-integrable $\varphi$), \cite[Theorem 4.5.1]{korolyuk2013theory} for the asymptotic normality. To keep the paper self-contained we include a simple proof of Theorem~\ref{thrm:classicalK}, specially adapted for product-form estimators, in Appendix~\ref{app:classicalproof}. It has two key ingredients, the first being the following decomposition expressing  the `global approximation error' $\mu^N_\times-\mu$ as a sum of products of `marginal approximation errors' $\mu_1^N-\mu_1,\dots,\mu_K^N-\mu_K$:
\begin{align}
\mu^N_\times-\mu=&\prod_{k=1}^K\mu^N_{k}-\mu=\prod_{k=1}^K[(\mu^N_{k}-\mu_k)+\mu_k]-\mu=\sum_{\emptyset\neq A\subseteq [K]}\left(\prod_{k\in A}[\mu^N_{k}-\mu_k]\right)\times \mu_{A^c},\label{eq:decomp}
\end{align}
The other is the following expression  for the $L^2$ norm of a generic product of  marginal errors (see~\cite[p.152]{korolyuk2013theory} for its multisample U-statistics analogue). It tells us that the product of  $l$ of these errors has $\cal{O}(N^{-l/2})$ norm, as one would expect given that the errors are independent and that classical theory (e.g.\ \cite[p.168]{chopin2020}) tells us that the norm of each is $\cal{O}(N^{-1/2})$.
\begin{lemma}
\label{lem:iidl2}If $A$ is a non-empty subset of $[K]$, $ \psi$ belongs to $ L^2_{\mu_A}$, and $\sigma_{A,B}^2(\psi)$ is as in~\eqref{eq:sigab}, then
\begin{align*}
&\Ebb{\left[\left(\prod_{k\in A}[\mu_k^N-\mu_k]\right)(\psi)\right]^2}= \frac{1}{N^{\mmag{A}}}\sum_{B\subseteq A}(-1)^{\mmag{A}-\mmag{B}}\sigma_{A,B}^2(\psi)\quad\forall N>0.
\end{align*}
\end{lemma}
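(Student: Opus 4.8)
The plan is to expand the empirical signed measures $\mu_k^N-\mu_k$ into averages of centred atoms, exploit the independence of the sample sequences across the $K$ coordinates to factorise the expected square into a product over $k\in A$ of per-coordinate ``second-moment measures'', and then recombine. First I would write $\mu_k^N-\mu_k=\tfrac1N\sum_{n=1}^N(\delta_{X_k^n}-\mu_k)$ and expand the square of the integral as a double integral,
\[
\left[\left(\prod_{k\in A}[\mu_k^N-\mu_k]\right)(\psi)\right]^2=\int\!\!\int \psi(x_A)\psi(y_A)\prod_{k\in A}(\mu_k^N-\mu_k)(dx_k)\,(\mu_k^N-\mu_k)(dy_k).
\]
Because the sample sequences indexed by distinct $k$ are independent, taking $\Ebb{\cdot}$ passes through the product and acts coordinatewise, so the whole expectation factorises as $\int\!\int \psi(x_A)\psi(y_A)\prod_{k\in A}\Gamma_k(dx_k,dy_k)$, where $\Gamma_k(dx_k,dy_k):=\Ebb{(\mu_k^N-\mu_k)(dx_k)(\mu_k^N-\mu_k)(dy_k)}$.

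Next I would compute $\Gamma_k$ explicitly. Expanding the empirical measure and using that distinct atoms are independent and each $\delta_{X_k^n}-\mu_k$ is centred, all off-diagonal ($n\neq n'$) cross-terms vanish, leaving $\Gamma_k=\tfrac1N(\Delta_k-\mu_k\otimes\mu_k)$, where $\Delta_k$ is the ``diagonal'' measure determined by $\int f(x,y)\Delta_k(dx,dy)=\int f(x,x)\mu_k(dx)$. This extracts the factor $N^{-\mmag{A}}$ at once and reduces the claim to the deterministic identity $\int\!\int\psi(x_A)\psi(y_A)\prod_{k\in A}(\Delta_k-\mu_k\otimes\mu_k)(dx_k,dy_k)=\sum_{B\subseteq A}(-1)^{\mmag{A}-\mmag{B}}\sigma_{A,B}^2(\psi)$.

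To finish I would expand the product $\prod_{k\in A}(\Delta_k-\mu_k\otimes\mu_k)$ over subsets $B\subseteq A$ recording the coordinates on which the diagonal factor $\Delta_k$ is chosen, picking up a sign $(-1)^{\mmag{A}-\mmag{B}}$. For a fixed $B$, the coordinates in $A\backslash B$ are integrated independently in $x$ and in $y$, producing two copies of $\mu_{A\backslash B}(\psi)$, while the diagonal on $B$ identifies the two copies and integrates them against $\mu_B$; the $B$-term is therefore $\mu_B([\mu_{A\backslash B}(\psi)]^2)$. This already matches the claimed sum up to replacing $\sigma_{A,B}^2(\psi)$ by $\mu_B([\mu_{A\backslash B}(\psi)]^2)$; the two differ by the constant $\mu_A(\psi)^2$ (using $\mu_B(\mu_{A\backslash B}(\psi))=\mu_A(\psi)$ and expanding the square in~\eqref{eq:sigab}), and this discrepancy cancels because $\sum_{B\subseteq A}(-1)^{\mmag{A}-\mmag{B}}=0$ for non-empty $A$.

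I expect the main obstacle to be the measure-theoretic bookkeeping of the first two steps rather than the combinatorics: justifying that $\Ebb{\cdot}$ may be taken inside the double integral and distributed across coordinates (Fubini/Tonelli, valid once $\psi\in L^2_{\mu_A}$ secures absolute integrability via Cauchy--Schwarz against each $\Gamma_k$), and handling the singular diagonal measure $\Delta_k$ cleanly. A tidy alternative that sidesteps $\Delta_k$ is to note directly that $\Ebb{Z_{\bm n}Z_{\bm n'}}=0$ unless $\bm n=\bm n'$ (any coordinate with $n_k\neq n_k'$ contributes an isolated centred factor of mean zero, which kills the expectation), reducing the second moment to $N^{-\mmag{A}}\Ebb{Z^2}$ for the single-sample variable $Z:=(\prod_{k\in A}[\delta_{X_k}-\mu_k])(\psi)$; expanding $Z=\sum_{B\subseteq A}(-1)^{\mmag{A}-\mmag{B}}[\mu_{A\backslash B}(\psi)](X_B)$ before squaring and invoking the same binomial cancellation then completes the identification.
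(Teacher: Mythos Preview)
Your proposal is correct. Both your main route and your ``tidy alternative'' reach the same identity, but only the alternative matches the paper's argument. The paper first rewrites $\left(\prod_{k\in A}[\mu_k^N-\mu_k]\right)(\psi)=\mu_A^N(\psi_A)$ with $\psi_A:=\sum_{B\subseteq A}(-1)^{\mmag{A}-\mmag{B}}\mu_{A\backslash B}(\psi)$, proves the degeneracy $\mu_k(\psi_A)=0$ for every $k\in A$, and uses this to kill all cross-terms $\Ebb{\psi_A(X^{\bm i})\psi_A(X^{\bm j})}$ with $\bm i\neq\bm j$; the remaining $\mu_A(\psi_A^2)/N^{\mmag{A}}$ is then expanded via the inclusion--exclusion identity $\sum_{B\subseteq A}(-1)^{\mmag{B}}=1_{\{A=\emptyset\}}$. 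Your main approach instead factorises the expected square across coordinates into the product of per-coordinate covariance measures $\Gamma_k=\tfrac1N(\Delta_k-\mu_k\otimes\mu_k)$ and expands $\prod_{k\in A}(\Delta_k-\mu_k\otimes\mu_k)$ over subsets $B$, never introducing $\psi_A$ explicitly. This is slightly more direct for the lemma in isolation and makes the $N^{-\mmag{A}}$ factor appear immediately; the price is the measure-theoretic bookkeeping you flag (the diagonal $\Delta_k$ and the Fubini justification). The paper's route buys reusable machinery: the decomposition $\psi_A$ and its degeneracy~\eqref{eq:special} are invoked again in the proof of Theorem~\ref{thrm:classicalK} to show the orthogonality $\Ebb{\mu_A^N(\psi_A)\mu_{A'}^N(\psi_{A'})}=0$ for $A\neq A'$, so within the paper that detour pays for itself.
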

\begin{proof}This lemma follows from the equation
\begin{align}
&\left(\prod_{k\in A}[\mu^N_{k}-\mu_k]\right)(\psi)=\sum_{B\subseteq A}(-1)^{\mmag{A}-\mmag{B}}\mu_B^N(\mu_{A\backslash B}(\psi))=\mu_A^N\left(\sum_{B\subseteq A}(-1)^{\mmag{A}-\mmag{B}}\mu_{A\backslash B}(\psi)\right)=:\mu_A^N(\psi_A)\label{eq:proderr}
\end{align}
which, together with~\eqref{eq:decomp}, is known as \emph{Hoeffding's canonical decomposition} in the U-statistics literature~\cite[p.38]{korolyuk2013theory} and \emph{ANOVA-like} elsewhere~\cite{Efron1981} (similar decomposition are commonplace in the Quasi Monte Carlo literature, e.g.\ \cite[Appendix~A]{Owen2013}). 
See Appendix~\ref{app:classicalproof} for the details.
\end{proof}
\subsection{Statistical efficiency}\label{sec:statseff}
The product-form estimator $\mu^N_\times(\varphi)$ in~\eqref{eq:prodestK} yields the best  unbiased estimates of $\mu(\varphi)$ that can be achieved using only the knowledge that $\mu$ is product-form and $N$ i.i.d.\ samples drawn from $\mu$:
\begin{theorem}
\label{thrm:mvu}For any given measurable real-valued function $\varphi$ on $(S,\cal{S})$, $\mu_\times^N(\varphi)$ is an MVUE for $\mu(\varphi)$: if $f$ is a measurable real-valued function on $(S^N,\cal{S}^N)$ such that
$$\Ebb{f(X^1,\dots,X^N)}=\mu(\varphi)\enskip\text{with}\enskip X^1,\dots,X^N\sim\mu\enskip\text{i.i.d.}$$
for all product-form $\mu$ on $(S,\cal{S})$ satisfying $\mu(\mmag{\varphi})<\infty$, then 
$$\text{Var}(f(X^1,\dots,X^N))\geq\text{Var}(\mu^N_\times(\varphi)).$$
\end{theorem}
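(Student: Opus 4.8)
The plan is to establish the minimum-variance property by the Lehmann--Scheff\'e route: exhibit a complete sufficient statistic for the family of product-form laws, observe that $\mu^N_\times(\varphi)$ is an unbiased function of it, and conclude by Rao--Blackwellisation. This is the generalised (multisample) analogue of Halmos's classical argument that a U-statistic is the unique MVUE for an estimable functional in the i.i.d. setting; see \cite{Lee1990,korolyuk2013theory}. Throughout I may assume $\text{Var}(f(X^1,\dots,X^N))<\infty$, since otherwise the asserted inequality is vacuous.

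First I would introduce the relevant symmetry. Viewing the data as a $K\times N$ array whose $k$-th row $(X^1_k,\dots,X^N_k)$ is i.i.d.\ from $\mu_k$, let the product group $G$ of $K$ independent permutations act by $\Pi\cdot(X^n_k):=(X^{\pi_k(n)}_k)$, where $\Pi=(\pi_1,\dots,\pi_K)$ and each $\pi_k$ permutes $[N]$. Because the samples within each row are i.i.d., the law of the array is $G$-invariant for every product-form $\mu$, and the natural sufficient statistic is the tuple $T:=(\mu^N_1,\dots,\mu^N_K)$ of component-wise empirical measures (equivalently, the unordered samples in each row). I would then symmetrise the competitor: set $\bar f:=(N!)^{-K}\sum_{\Pi\in G}f(\Pi\cdot(X^n_k))$. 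Since conditioning on $T$ renders all $(N!)^K$ within-row orderings equally likely (exchangeability within each row), one has $\bar f=\Ebb{f\mid T}$; hence $\bar f$ remains unbiased for every admissible $\mu$, and the conditional-variance (Rao--Blackwell) inequality yields $\text{Var}(\bar f)\leq\text{Var}(f)$.

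Next I would note that $\mu^N_\times(\varphi)$ is itself $G$-invariant, being a symmetric function of each row built from $(\mu^N_1,\dots,\mu^N_K)$, and, by Theorem~\ref{thrm:classicalK}, unbiased for all admissible $\mu$. Thus the difference $d:=\bar f-\mu^N_\times(\varphi)$ is a $\sigma(T)$-measurable function with $\Ebb{d}=0$ for every product-form $\mu$ satisfying $\mu(\mmag{\varphi})<\infty$. The final step invokes completeness: the tuple $T$ of component-wise order statistics is a complete sufficient statistic for the family of product-form distributions, so $\Ebb{d}=0$ across this family forces $d=0$ almost surely. Consequently $\mu^N_\times(\varphi)=\bar f$ a.s., whence $\text{Var}(\mu^N_\times(\varphi))=\text{Var}(\bar f)\leq\text{Var}(f)$, as claimed.

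The main obstacle is the completeness statement. For a single sample, completeness of the order statistic for the family of all laws on a sufficiently rich space is classical, and the $K$-fold version follows by a Fubini/tensorisation argument; the care required is (i) to treat general measurable $(S_k,\cal{S}_k)$ rather than only $\r$, and (ii) to check that restricting to product-form $\mu$ with $\mu(\mmag{\varphi})<\infty$ still leaves the family rich enough to force vanishing. The latter holds because the admissible family contains a complete subfamily (e.g.\ product measures whose factors are finitely supported on regions where $\varphi$ is bounded), and a family whose closure under these operations is complete forces $d=0$. I would therefore isolate and prove this completeness lemma first, and then assemble the three steps above.
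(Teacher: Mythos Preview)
Your proposal is correct and follows essentially the same route as the paper. The paper symmetrises the competitor $f$ under the group $G$ of coordinate-wise permutations, proves a completeness lemma (its Lemma~\ref{lem:sym}: any $G$-symmetric function with zero mean under all admissible product-form $\mu$ vanishes identically) via exactly the finitely-supported-measure/polynomial argument you sketch, and then concludes that the symmetrised competitor coincides with $\mu^N_\times(\varphi)$. The only cosmetic difference is that the paper obtains the variance inequality by expanding the symmetrised estimator and applying Cauchy--Schwarz to the covariances, whereas you invoke the conditional-variance (Rao--Blackwell) inequality directly via $\bar f=\Ebb{f\mid T}$; your formulation is arguably cleaner but the content is the same.
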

\begin{proof}See Appendix~\ref{app:mvu}.\end{proof}
While it is well-known that unisample U-statistics are  MVUEs (e.g.\ \cite{Clemencon2016}), we have been unable to locate an explicit proof that covers the general multisample case and, in particular, that of product-form estimators. Instead, we adapt the argument given in~\cite[Chap.\ 1]{Lee1990} (whose origins trace back to~\cite{Halmos1946}) for unisample U-statistics  and prove Theorem~\ref{thrm:mvu} in  Appendix~\ref{app:mvu}. 

Theorem~\ref{thrm:mvu} implies that product-form estimators achieve lower variances than their conventional counterparts:
\begin{corollary}
\label{cor:varbound}If $\varphi$ belongs to $L^2_\mu$ and $\sigma^2(\varphi):=\mu([\varphi-\mu(\varphi)]^2)$ denotes $\mu^N(\varphi)$'s asymptotic variance,
\begin{align*}
\text{Var}(\mu^N_\times(\varphi))\leq
\frac{\sigma^2(\varphi)}{N}=\text{Var}(\mu^N(\varphi))\quad\forall N>0,\qquad
\sigma_{\times}^2(\varphi)\leq \sigma^2(\varphi).
\end{align*}
\end{corollary}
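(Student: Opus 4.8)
The plan is to derive both inequalities directly from the MVUE property (Theorem~\ref{thrm:mvu}) together with the finite-sample and asymptotic variance information in Theorem~\ref{thrm:classicalK}, so that essentially no fresh computation is required.

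First I would establish the finite-sample bound. The standard estimator is of the form $f(X^1,\dots,X^N):=\frac1N\sum_{n=1}^N\varphi(X^n)$, a measurable function on $(S^N,\cal{S}^N)$ that is unbiased for $\mu(\varphi)$ for \emph{every} product-form $\mu$ with $\mu(\mmag{\varphi})<\infty$, and whose variance equals $\sigma^2(\varphi)/N$ whenever $\varphi\in L^2_\mu$. Thus $f$ meets the hypotheses of Theorem~\ref{thrm:mvu}, and the MVUE property immediately yields $\text{Var}(\mu^N_\times(\varphi))\leq\text{Var}(\mu^N(\varphi))=\sigma^2(\varphi)/N$ for all $N>0$, which is the first claimed inequality.

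Next I would obtain the asymptotic inequality by rescaling and passing to the limit. Multiplying the finite-sample bound by $N$ gives $N\,\text{Var}(\mu^N_\times(\varphi))\leq\sigma^2(\varphi)$ for every $N$. From the variance formula~\eqref{eq:vark}, the leading-order contribution as $N\to\infty$ comes only from the singletons $A=\{k\}$: the $B=\emptyset$ summand vanishes because $\sigma^2_{\{k\},\emptyset}\equiv0$, while the $B=\{k\}$ summand (with sign $(-1)^{0}=+1$) equals $\sigma^2_{\{k\},\{k\}}(\mu_{\{k\}^c}(\varphi))=\mu_k([\mu_{\{k\}^c}(\varphi)-\mu(\varphi)]^2)=\sigma^2_k(\varphi)$, where I use $\mu_k(\mu_{\{k\}^c}(\varphi))=\mu(\varphi)$; all terms with $\mmag{A}\geq2$ are $\cal{O}(N^{-2})$. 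Hence $N\,\text{Var}(\mu^N_\times(\varphi))\to\sum_{k=1}^K\sigma^2_k(\varphi)=\sigma^2_\times(\varphi)$, consistently with the CLT~\eqref{eq:clt}. Taking $N\to\infty$ in the rescaled bound then gives $\sigma^2_\times(\varphi)\leq\sigma^2(\varphi)$.

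I expect no genuine obstacle here, as the corollary is a direct consequence of the two preceding theorems; the only point requiring a little care is the identification of the leading $\mmag{A}=1$ term of~\eqref{eq:vark} with $\sigma^2_\times(\varphi)$, which the short computation above settles. An alternative, self-contained route to the second inequality that bypasses the limit would be to read it off the Hoeffding/ANOVA decomposition underlying Lemma~\ref{lem:iidl2}, observing that $\sigma^2_\times(\varphi)$ collects only the first-order ($\mmag{A}=1$) ANOVA components while $\sigma^2(\varphi)$ is the sum of all of them; but the rescaling argument is shorter given what is already available.
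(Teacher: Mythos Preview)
Your argument is correct. For the finite-sample bound you proceed exactly as the paper does: invoke Theorem~\ref{thrm:mvu} with the standard estimator $\mu^N(\varphi)$ as the competing unbiased $f$.

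For the asymptotic bound, however, you take a genuinely different route. The paper proves $\sigma^2_\times(\varphi)\leq\sigma^2(\varphi)$ directly and self-containedly, without appealing to Theorem~\ref{thrm:mvu} or to the variance formula~\eqref{eq:vark}: it iterates Jensen's inequality, showing first that $\sigma_1^2(\varphi)+\sigma_2^2(\varphi)\leq\mu_{[2]}([\mu_{[3:K]}(\varphi)-\mu(\varphi)]^2)$ and then absorbing one coordinate at a time until the right-hand side becomes $\mu([\varphi-\mu(\varphi)]^2)$. Your approach instead multiplies the already-established finite-sample bound by $N$ and identifies the limit of $N\,\text{Var}(\mu^N_\times(\varphi))$ with $\sigma^2_\times(\varphi)$ via the $\mmag{A}=1$ terms of~\eqref{eq:vark}. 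This is shorter and entirely legitimate given what is available; the paper's argument buys independence from Theorem~\ref{thrm:mvu} (so the asymptotic inequality stands on its own, using only Jensen) and makes transparent exactly which inequality is being applied at each step. Your closing remark about the ANOVA decomposition is also a valid third route and is in the same spirit as the paper's direct computation.
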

\begin{proof}See Appendix~\ref{app:mvu}.\end{proof}
In other words, product-form estimators are more statistically efficient than their standard counterparts: using the same number of independent samples drawn from the target, $\mu^N_\times(\varphi)$ achieves lower variance than $\mu^N(\varphi)$. The reason behind this variance reduction was outlined in Section~\ref{sec:intro}: the product-form estimator uses the empirical distribution of the collection $(X^{\bm{n}})_{\bm{n}\in[N]^K}$ of permuted tuples as an approximation to $\mu$. Because $\mu$ is product-form, each of these permuted tuples is as much a sample drawn from $\mu$ as any of the original unpermuted tuples $(X^n)_{n=1}^N$. Hence, product-form estimators transform $N$ samples drawn from $\mu$ into $N^K$ samples and, consequently, lower the variance. However, the permuted tuples are not independent and we get a diminishing returns effect: the more permutations we make, the greater the correlations among them, and the less `new information' each new permutation affords us. For this reason, the estimator variance remains $\cal{O}(N^{-1})$, cf.\ \eqref{eq:vark}, instead of $\cal{O}(N^{-K})$ as would be the case for the standard estimator using $N^K$ independent samples. As we discuss in Section~\ref{sec:discussion}, there is also a pragmatic middle ground here: use $N<M<N^K$ permutations instead of all $N^K$ possible ones. In particular, by choosing these $M$ permutations to be as uncorrelated as possible (e.g. so that they have few overlapping entries), it might be possible to retain most of the variance reduction while avoiding the full $\cal{O}(N^K)$ cost (cf.\ \cite{kong2020design} and references therein for similar feats in the U-statistics literature).
 
Given that the variances of both estimators are (asymptotically) proportional to each other, we are now faced with the question `how large might the proportionality constant be?'. If the test function is linear or constant, e.g.\ $S_1=\dots=S_K=\r$ and
\begin{equation}
\label{eq:lintest}\varphi(x)=\sum_{k=1}^Kx_k,
\end{equation}
then the two estimators trivially coincide, no variance reduction is achieved, and the constant is one. However, these are the cases in which the standard estimator performs well (e.g.\ for \eqref{eq:lintest}, $\mu^N(\varphi)$'s variance breaks down into a sum of $K$ univariate integrals and, consequently, grows slowly with the dimension $K$).  However, if the test function includes dependencies between the components, then the proportionality constant can be arbitrarily large and the variance reduction unbounded as the following example illustrates.
\begin{example}\label{ex:iidsamplers}
If $K=2$,  $\mu_1=\mu_2=\cal{N}(0,1)$,  and  $\varphi(x):=1_{\{\min(x_1,x_2)\geq \alpha\}}(x)$, then
\begin{align*}
\mu(\varphi) = \mu(\varphi^2)= \left[1-\Phi\left( \alpha\right)\right]^2, \qquad \mu_1(\varphi)(x_2)=1_{\{x_2\geq\alpha\}}[1-\Phi(\alpha)],
\end{align*}
where $\Phi$ denotes the CDF of a standard normal, and similarly for $\mu_2(\varphi)(x_1)$. In addition,
\begin{align*}
\mu_1(\mu_2(\varphi)^2)=\mu_2(\mu_1(\varphi)^2)= \left[1-\Phi\left( \alpha\right)\right]^3.
\end{align*}
It then follows that
$$\frac{\sigma^2(\varphi)}{\sigma^2_{\times}(\varphi)}=\frac{ 2-\Phi\left( \alpha\right)}{2\left[1-\Phi\left( \alpha\right)\right]}\to\infty\quad\text{as}\quad\alpha\to\infty.$$

It is not difficult to glean some intuition as to why the product-form estimator yields far more accurate estimates than its standard counterpart for large $\alpha$. In these cases, unpermuted tuples with \emph{both} components greater than $\alpha$ are extremely rare (they occur with probability $[1-\Phi(\alpha)]^2$) and, until one arises, the standard estimator is stuck at zero (a relative error of $100\%$). On the other hand, for the product-form estimator to return a non-zero estimate, we only require  unpermuted tuples with a single component greater than $\alpha$, which are generated much more frequently (with probability $[1-\Phi(\alpha)]$).
\end{example}

Of particular interest is the case of high-dimensional targets (i.e.\ large $K$) for which obtaining accurate estimates of $\mu(\varphi)$ proves challenging.   Even though the exact manner in which the variance reduction achieved by the product-form estimator scales with dimension of course depends on the precise target and test function, it is straightforward to gain some insight by revisiting our starting example:
\begin{example}\label{ex:K}Suppose that
\begin{align*}
& S_1=\dots=S_K,\quad \cal{S}_1=\dots=\cal{S}_K, \quad \mu_1=\dots=\mu_K=\rho,\\
& \varphi=\prod_{k=1}^K\varphi_k,\quad \varphi_1=\dots=\varphi_K=\psi,
\end{align*}
for some univariate distribution $\rho$ and test function $\psi$ satisfying $\rho(\psi)\neq0$. In this case,
\begin{align}
\sigma^2(\varphi)&=\mu(\varphi^2)-\mu(\varphi)^2=\rho(\psi^2)^K-\rho(\psi)^{2K},\notag\\
\sigma^2_{\times}(\varphi)&=K\rho([\rho(\psi)^{K-1}[\psi-\rho(\psi)]]^2)=K\rho(\psi)^{2(K-1)}\rho([\psi-\rho(\psi)]^2)
=CV^2K\rho(\psi)^{2K},\label{eq:mfeu9anguiynguea}
\end{align}
where $CV:= {\sqrt{\rho([\psi-\rho(\psi)]^2)}}\big/{\rho(\psi)}$ denotes the coefficient of variation of $\psi$ w.r.t.\ $\rho$. Hence,
\begin{align}
&\frac{\sigma^2(\varphi)}{\sigma^2_{\times}(\varphi)}=\frac{(\rho(\psi^2)/\rho(\psi)^2)^K-1}{CV^2K}=\frac{(1+CV^2)^K-1}{CV^2K}=\frac{1}{K}\sum_{k=0}^{K-1}\binom{K}{k+1}CV^{2k},\label{eq:mfeu9anguiynguea2}
\end{align}
and we see that 
the reduction in variance grows exponentially with the dimension $K$.

At first glance,~\eqref{eq:mfeu9anguiynguea} might appear  to imply that the number of samples required for $\mu^N_\times(\varphi)$ to yield a reasonable estimate of $\mu(\varphi)$ grows exponentially with $K$ if $\mmag{\rho(\psi)}>1$. However, what we deem  a `reasonable estimate' should take into account the magnitude of the average $\mu(\varphi)$ we are estimating. In particular, it is natural to ask for the standard deviation of our estimates to be $\varepsilon\mmag{\mu(\varphi)}$ for some prescribed relative tolerance $\varepsilon>0$. In this case, we find that the number of samples required by the product-form estimator is approximately $$\sigma^2_{\times}(\varphi)/(\varepsilon^2\mu(\varphi)^2)=CV^2K\varepsilon^{-2}.$$ In the case of the conventional estimator $\mu^N(\varphi)$, the number required to achieve the same accuracy is instead $$\sigma^2(\varphi)/(\varepsilon^2\mu(\varphi)^2)=\varepsilon^{-2}((1+CV^2)^K-1).$$ That is, the number of samples necessary to obtain a reasonable estimate grows linearly with dimension for $\mu^N_\times(\varphi)$ and exponentially for $\mu^N(\varphi)$. 
\end{example}

\begin{figure*}[h!]
    \begin{center}
    \includegraphics[width=0.8\textwidth]{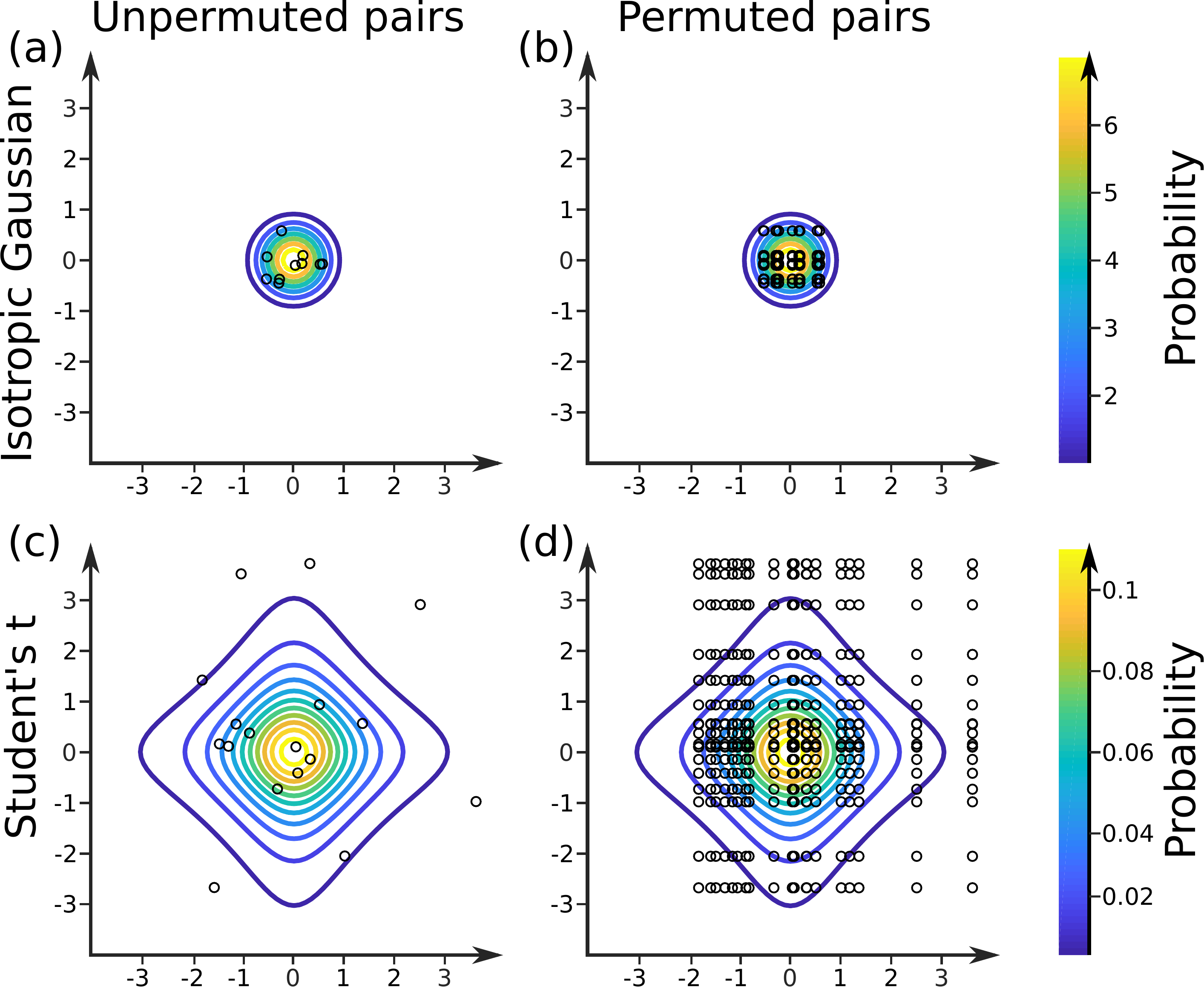}
    \vspace{-20pt}
    \end{center}
\caption{\textbf{Ensembles of unpermuted (a,c) and permuted (b,d) pairs for a peaked target (a,b) and heavy tailed one (c,d).} \textbf{(a)} $10$ pairs (dots) independently drawn from a two-dimensional isotropic Gaussian (contours) with mean zero and variance $0.1$. \textbf{(b)} The $100$ pairs (dots) obtained by permuting the pairs in (a). \textbf{(c)} $20$ pairs (dots) independently draw from the product of two student-t distributions (contours) with $1.5$ degrees of freedom. \textbf{(d)} The $400$ permuted pairs (dots) obtained by permuting the pairs in (c). }\label{fig:ldlike}
\end{figure*}

Notice that the univariate coefficient of variation $CV$ features heavily in Example~\ref{ex:K}'s analysis: the greater it is, the greater the variance reduction, and the difference gets amplified exponentially with the dimension $K$. This observation might be explained as follows: if $\mu$ is highly peaked (so that the coefficient is close to zero), then the unpermuted tuples are clumped together around the peak (Fig.\ \ref{fig:ldlike}a), permuting their entries only yields further tuples around the peak (Fig.\ \ref{fig:ldlike}b), and the empirical average changes little. If, on the other hand, $\mu$ is spread out (so that the coefficient is large), then the unpermuted pairs are scattered across the space (Fig.\ \ref{fig:ldlike}c), permuting their entries reveals unexplored regions of the space (Fig.\ \ref{fig:ldlike}d), and the estimates improve. Of course, how spread out the target is must be measured in terms of the test function and we end up with the coefficient of variation in~\eqref{eq:mfeu9anguiynguea2}.
\subsection{Computational efficiency}\label{sec:compeff}
As shown in the previous section, product-form estimators are always at least as statistically efficient  as their conventional counterparts: the variances of the former are bounded above by those of the latter. These gains in statistical efficiency come at a computational cost: even though both conventional and product-form estimators share the same $\cal{O}(N)$ memory needs, the latter requires evaluating the test function $N^K$ times, while the former requires only  $N$ evaluations. For this reason, the question of whether product-form estimators are more \emph{computationally efficient} than their conventional counterparts (i.e.\ achieve smaller errors given the same computational budget) is not as straightforward. In short, sometimes but not always. 

One way to answer  the computational efficiency question is to compare the cost incurred by each estimator in order to achieve a desired given variance $\sigma^2$. To do so, we approximate the variance of $\mu^N_\times(\varphi)$ with its asymptotic variance divided by the sample number (as justified by Theorem~\ref{thrm:classicalK}). The number of samples required for the variance to equal $\sigma^2$ is $N:=\sigma^2(\varphi)/\sigma^2$ for the conventional estimator and (approximately) $N_\times:=\sigma^2_{\times}(\varphi)/\sigma^2$ for the product-form one. The costs of evaluating the former with $N$ samples and the latter with $N_\times$ samples are $NC_{\varphi} +N C_X +N$  and $N^K_\times C_\varphi+N_\times C_X+N^K_\times$,
respectively, where $C_\varphi$ and $C_X$ are the costs, relative to that of a single elementary arithmetic operation, of evaluating $\varphi$ and generating a sample from $\mu$, respectively, and the rightmost $N$ and $N^K_\times$ terms account for the cost of computing the corresponding sample average once all evaluations of $\varphi$ are carried out. It follows that $\mu^N_\times(\varphi)$ is (asymptotically) at least as computationally efficient as $\mu^N(\varphi)$ if and only if the ratio of their respective costs is no smaller than one or, after some re-arranging,
\begin{equation}\label{eq:perform}\frac{\sigma^2(\varphi)}{\sigma^2_{\times}(\varphi)}\geq\frac{(\sigma^2_{\times}(\varphi)/{\sigma^2})^{K-1}C_r+1}{C_r+1},\end{equation}
where $C_r:=(C_\varphi+1)/C_X$ denotes the relative cost of evaluating the test function and drawing samples. Our first observation here is that, because $\sigma^2(\varphi)\geq \sigma^2_{\times}(\varphi)$ (Corollary~\ref{cor:varbound}), the above is always satisfied in the limit $C_r\to0$. This corresponds the case where the cost of acquiring the samples dwarfs the overhead of evaluating the sample averages (for instance, if the samples are obtained from long simulations or real-life experiments). If so, we do really want to make the most of the samples we have and product-form estimators help us to do so. Conversely, if samples are cheap to generate and the test function is expensive to evaluate (i.e. $C_r\to\infty$), then we are better off using the basic estimator.

To investigate the case where the costs of generating samples and evaluating the test function are comparable ($C_r\approx 1$), note that the variance approximation $\text{Var}(\mu^{N_\times}_\times(\varphi))\approx \sigma^2_\times(\varphi)/N_\times$ and, consequently, \eqref{eq:perform} are  valid only if $\sigma^2_{\times}(\varphi)>\sigma^2$. Otherwise, $N_\times=1$ and the product-form estimator simply equals $\varphi(X^1)$ with variance $\sigma^2(\varphi)$. In the high-dimensional (i.e.\ large $K$) case which is of particular interest, ~\eqref{eq:perform} then (approximately) reduces  to
\begin{equation}\label{eq:perform2}\frac{\sigma^2(\varphi)}{\sigma^2_{\times}(\varphi)}\geq\frac{1}{2}\left(\frac{\sigma^2_{\times}(\varphi)}{\sigma^2}\right)^{K-1}.\end{equation}
To gain insight into whether it is reasonable to expect the above to hold, we revisit Example~\ref{ex:K}.
\begin{example}\label{ex:perform}Setting once again our desired standard deviation to be proportional to the magnitude of the target average (i.e.\ $\sigma=\varepsilon\mmag{\mu(\varphi)}=\varepsilon|\rho(\psi)|^K$) and calling on~(\ref{eq:mfeu9anguiynguea},\ref{eq:mfeu9anguiynguea2}), we re-write~\eqref{eq:perform2} as
\begin{align*}
\frac{(1+CV^2)^K-1}{CV^2K}\geq \frac{(CV^2K\varepsilon^{-2})^{K-1}}{2}
 \Leftrightarrow  \frac{(1+CV^2)^K-1}{CV^{2K}}\geq \frac{\varepsilon^2}{2}\left(\frac{K}{\varepsilon^{2}}\right)^K.
\end{align*}
The expression shows that, in this full $\cal{O}(N^K)$ cost case, $\mu^N(\varphi)$ outperforms $\mu^N_\times(\varphi)$ in computational terms for large dimensions $K$ (and, even more so, for small relative tolerances $\varepsilon$). 
\end{example}

In summary, unless the cost of generating samples is significantly larger than that of evaluating $\varphi$, we expect the basic estimator to outperform the product-form one. Simply put, independent samples are more valuable for estimation than correlated permutations thereof. Hence, if independent samples are cheap to generate, then we are better off drawing further independent samples instead of permuting the ones we have. 

That is, unless one can find a way to evaluate the product-form estimator that does not require summing over all $N^K$ permutations. Indeed, the above analysis is out of place for Example~\ref{ex:perform} because, in this case, we can express the product-form estimator as the product 
\begin{equation}\label{eq:trick}\mu^N_\times(\varphi)=\prod_{k=1}^K\left(\frac{1}{N}\sum_{n=1}^N\psi(X_k^n)\right)=\prod_{k=1}^K\mu^N_k(\psi)\end{equation}
of the univariate sample averages $\mu^N_1(\psi),\dots,\mu^N_K(\psi)$ and evaluate each of these separately at a total $\cal{O}(KN)$ cost. 
Given that the number of samples required for $\mu^N_\times(\varphi)$ to yield a reasonable estimate scales linearly with dimension (Example~\ref{ex:K}), it follows that the cost incurred by computing such an estimate scales quadratically with dimension. In the case of $\mu^N(\varphi)$, the number of samples required, and hence the cost, scales exponentially with dimension; making the product-form estimator the clear choice for this simple case. This type of trick significantly expands the usefulness of product-form estimators, as we see in the following section.

\subsection{Efficient computation}\label{sec:lincost}
Recall our starting example from Section~\ref{sec:intro}. In that case, the product-form estimator trivially breaks down into the product of $K$ sample averages~\eqref{eq:intrvar} and, consequently, we can evaluate it in $\cal{O}(KN)$ operations. We can exploit this trick whenever the test function possesses product-like structure: if $\varphi$ is a  sum
\begin{equation}\label{eq:soptest}\varphi=\sum_{j=1}^J\varphi^j\enskip\text{of products}\enskip \varphi^j:=\prod_{k=1}^{K}\varphi_k^j\end{equation}
of univariate functions $(\varphi_k^j:S_k\to\r)_{j\in[J],k\in[K]}$, the product-form estimator decomposes into a sum of products (SOP) of univariate averages, 
$$\mu_\times^N(\varphi)=\sum_{j=1}^J\prod_{k=1}^K\mu_k^N(\varphi_k^j),$$
where
$$\mu_k^N(\varphi_k^j):=\frac{1}{N}\sum_{n=1}^N\varphi_k^j(X^n_k)\enskip\forall j\in[J],\enskip k\in [K],$$
and we are able to evaluate $\mu_\times^N(\varphi)$ in $\cal{O}(KN)$ operations. (Of course, `univariate' need not mean that the function is defined on $\r$ and we can be strategic in our choice of component spaces $S_1,\dots,S_K$; e.g.\ if $\varphi(x_1,x_2,x_3)=f(x_1,x_2)g(x_3)$ for some functions $f:\r^2\to\r$ and $g:\r\to\r$, we could pick $K:=2$, $S_1:=\r^2$, and $S_2:=\r$.)
In these cases,  the use of product-form estimators amounts to nothing more than a dimensionality-reduction technique: we exploit the independence of the target to express our $K$-dimensional integral  in terms of an SOP of one-dimensional integrals,
$$\mu(\varphi)=\sum_{j=1}^J\prod_{k=1}^K\mu_k(\varphi_k^j)=:\text{sop}(\{\mu_k(\varphi^j_k)\}_{j\in [J],k\in [ K]}),$$
estimate each of these separately,
$$\mu_k(\varphi_k^j)\approx\mu_k^N(\varphi_k^j)\quad\forall j\in [J],\enskip k\in [ K],$$
and replace the one-dimensional integrals in the SOP  with their estimates to obtain an estimate for the $K$-dimensional integral:
$$\mu(\varphi)\approx \text{sop}(\{\mu_k^N(\varphi^j_k)\}_{j\in [J],k\in [ K]})=\mu^N_\times(\varphi).$$
By so exploiting the structure in $\mu$ and $\varphi$, the product-form estimator achieves a lower variance than the standard estimator (Corollary~\ref{cor:varbound}). Moreover, evaluating each  univariate sample average $\mu_k^N(\varphi_k^j)$ requires only $\cal{O}(N)$ operations and, consequently the computational complexity of $\mu^N_\times(\varphi)$ is $\cal{O}(KN)$. 
The running time can be further reduced by calculating the univariate sample averages in parallel.

Similar considerations apply if the test function $\varphi$ is a  product of  low-dimensional functions (and sums thereof) instead of univariate ones, e.g.\ $\varphi(x)=\prod_{i=1}^I\varphi_i((x_k)_{k\in A_i})$ for a collection of factors $\varphi_1,\dots,\varphi_I$ with arguments indexed by subsets $A_1,\dots,A_I$ of $[K]$. As with the SOP case, one should aim to swap as many summation and product signs in
$$\mu_\times^N(\varphi)=\frac{1}{N^K}\sum_{n_1=1}^N\dots\sum_{n_K=1}^N\prod_{i=1}^I\varphi_i((X_{k}^{n_k})_{k\in A_i})$$
as the factors permit. Exactly how best to do this is obvious for simple situations such as that in Example~\ref{ex:hier} in Section~\ref{sec:is}. For more complicated ones, we advice using the `variable elimination' algorithm (e.g.\ \cite[Chapter~9]{Koller2009}) commonly employed for inference in discrete graphical models. The complexity of the resulting procedure essentially depends on the order in which one attempts the swapping (however, it is easy to find bounds thereon, for instance, it is bounded below by both the maximum cardinality of $A_1,\dots,A_I$ and half the length of the longest cycle in $\varphi$'s factor graph). While finding the ordering with lowest complexity for general partially-factorized $\varphi$ itself proves to be a problem whose worst-case complexity is exponential in $K$, good suboptimal orderings can often be found using cheap heuristics (cf.\ \cite[Section~9.4.3]{Koller2009}).

For general   $\varphi$ lacking any sort of product structure, we are sometimes able to extend the  linear-cost approach by approximating $\varphi$ with SOPs (e.g.\ using truncated Taylor expansions for analytic $\varphi$). The idea is that, if $\varphi\approx \varphi_{sop}$ for some SOP $\varphi_{sop}$, then 
$$\mu(\varphi)\approx\mu(\varphi_{sop}),\quad \text{Var}(\mu_\times^N(\varphi))\approx \text{Var}(\mu_\times^N(\varphi_{sop})),$$
and we can use $\mu^N_\times(\varphi_{sop})\approx \mu(\varphi_{sop})$ as a linear-cost estimator for $\mu(\varphi)$ without significantly affecting the variance reduction. This of course comes at the expense of introducing a bias in our estimates, albeit one that can often be made arbitrarily small by using more and more refined approximations (these biases may in principle be removed using multi-level randomization~\cite{Mcleish2010,Rhee2015}). The choice of approximation quality itself proves to be a balancing act as more refined approximations typically incur higher evaluation costs. If these costs are high enough, then any potential computational gains afforded by the reduction in variance are lost. In summary, this SOP approximation approach is most beneficial for test functions   (a) that are effectively approximated by SOP functions (so that the bias is low), (b) whose SOP approximations are relatively cheap to compute (so that the cost is low), and (c) that have a high-dimensional  product-form component to them (so that the variance reduction is large, cf.\ Section~\ref{sec:statseff}). In these cases, the gains in performance can be substantial as illustrated by the following toy example.
\begin{example}\label{ex:taylor}
Let $\mu_1, \dots, \mu_K$ be uniform distributions on the interval $[0, a]$ of length $a>1$ and consider the test function $\varphi(x):=e^{x_{1}\dots x_K}$. The integral can be expressed in terms of the generalized hypergeometric function $\pFq{p}{q}$,
\begin{align*}
\mu(\varphi) &= \sum_{j=0}^\infty\frac{\mu_1(x_1^j)\dots \mu_K(x_K^j)}{j!}= \sum_{j=0}^\infty\frac{1}{j!}\left[ \frac{a^j}{(j+1)}\right]^K=\pFq{K}{K}(1,\dots, 1; 2, \dots, 2; a^K),
\end{align*}
and grows super-exponentially with the dimension $K$ (see Fig.\ \ref{fig:taylor}a).  Because
\begin{align*}\varphi(x)=e^{x_{1}\dots x_K}&\approx\sum_{j=0}^J\frac{[x_{1}\dots x_K]^j}{j!}=1+\sum_{j=1}^J\frac{x_1^j\dots x_K^j}{j!}=:\varphi_J(x)\end{align*}
for large enough truncation cutoffs $J$, we have that
$$\mu^N_{\times}(\varphi)\approx\mu^N_{ \times}(\varphi_J)=1+\sum_{j=1}^{J}\frac{\mu_1^N(x_1^{j})\dots\mu_K^N(x_K^{j})}{j!}.$$
Using $\mu^N_{\times}(\varphi_J)$ instead of $\mu^N_{\times}(\varphi)$ as an estimator for $\mu(\varphi)$, we lower the computational cost from $\cal{O}(N^K)$ to $\cal{O}(K N)$. In exchange, we introduce a bias:
\begin{align*}\Ebb{\mu^N_{\times}(\varphi_J)}-\mu(\varphi)&=\mu(\varphi_J)-\mu(\varphi)=\mu(\varphi_J-\varphi)=\mu\left(\sum_{j=J+1}^\infty\frac{x_1^j\dots x_K^j}{j!}\right)\\
&=\sum_{j=J+1}^\infty\frac{\mu_1(x_1^j)\dots\mu_K(x_K^j)}{j!}=\sum_{j=J+1}^\infty\frac{1}{j!}\left[\frac{a^j}{j+1}\right]^K.
\end{align*}
As $\sum_{j=J+1}^\infty\frac{a^{jK}}{j!}=o(a^{JK}/J!)$, the bias decays super-exponentially with the cutoff $J$, at least for sufficiently large $J$. In practice, we found it to be significant for $J$s smaller than $0.8a^K$ and negligible for $J$s larger than $1.2a^K$ (Fig.\ \ref{fig:taylor}b). In particular, the cutoff $J$ necessary for $\mu^N_{\times}(\varphi_J)$ to yield estimates with small bias grows exponentially with the dimension $K$.

\begin{figure*}
    \begin{center}
    \includegraphics[width=1\textwidth]{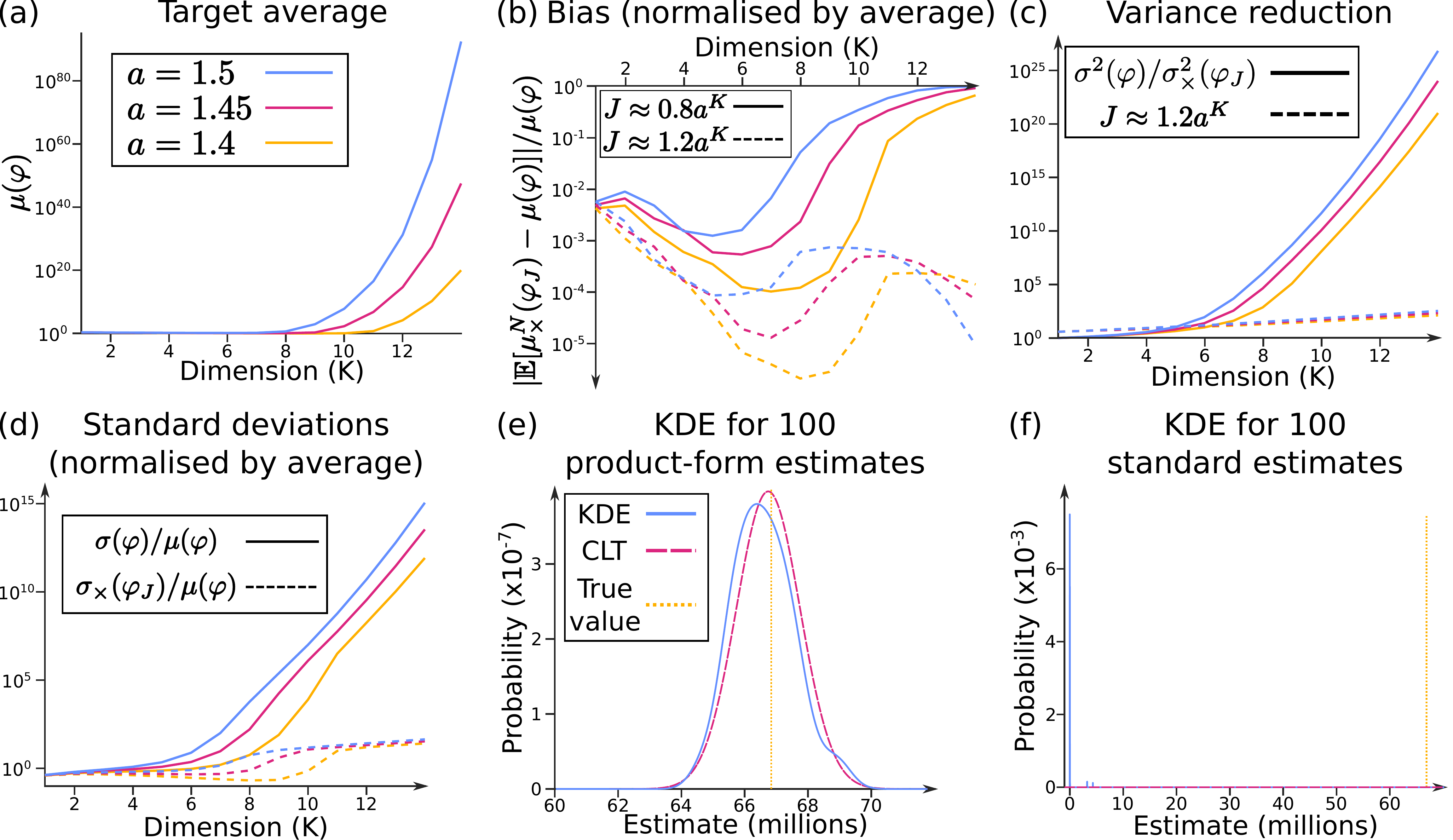}
    \vspace{-30pt}
    \end{center}

\caption{\textbf{(a--d)} Plots generated for three values of $a$:  $a=1.4$ (blue), $a=1.45$ (magenta), and $a=1.5$ (yellow). \textbf{(a)} Target average $\mu(\varphi)$ as a function of dimension $K$. \textbf{(b)} Bias of product-form estimator (normalized by target average) as a function of  $K$ with truncation cut-offs $J=\lceil 0.8a^K\rceil+2$ (solid) and $J=\lceil 1.2a^K\rceil+2$ (dashed). We added the $+2$ to avoid trivial cutoffs for low values of $a^K$.  \textbf{(c)} Ratio of asymptotic variances $\sigma^2(\varphi)/\sigma^2_\times(\varphi_J)$ (solid) with $J=\lceil 1.2a^K\rceil+2$ (dashed) as a function of $K$.  \textbf{(d)} Asymptotic standard deviation (normalized by target average) for conventional (solid) and biased product-form (dashed, with $J=\lceil 1.2a^K\rceil+2$) estimators as a function of $K$. \textbf{(e)} Kernel density estimator with plug-in bandwidth~\cite{Wand1994}  (blue) obtained with $a=1.5$, $K=10$, $J=70$, and $100$ repeats of  $\mu^N_\times(\varphi_J)$   each involving $N=10^6$ samples is a good match to the corresponding sampling distribution (magenta) predicted by the CLT in Theorem~\ref{thrm:classicalK}. Comparing with the target average (yellow), we find a mean absolute error across repeats of $6.73\times 10^5\approx 1\%\mu(\varphi)$. \textbf{(f)} As in (e) but for $\mu^N(\varphi)$. This time, the predicted sampling distribution is extremely wide (with a standard deviation of $6.7\times10^{14}$) and a poor match to the kernel density estimator (almost a Dirac delta close to zero). The mean absolute error is $6.67\times 10^7\approx \mu(\varphi)$. The estimator's failure stems from the extreme rarity of samples $X^n$ achieving very large values of $\varphi(X^n)$ (i.e.\ those with  components that are all close to $a$). Because the components  are independent, they are extremely unlikely to simultaneously be close to $a$ and the aforementioned samples are not observed for realistic ensemble sizes $N$. The product-form estimator avoids this issue by averaging over each component separately.}\label{fig:taylor}
\end{figure*}

However, similar manipulations to those above reveal that
\begin{align*}
   \sigma^2(\varphi) =& \pFq{K}{K}\left(1,\dots, 1; 2, \dots, 2; 2a^{K}\right)- \pFq{K}{K}(1,\dots, 1; 2, \dots, 2; a^K)^2\\
   \sigma_{\times}^2(\varphi_J) =& K\sum_{i=0}^J\sum_{j=0}^J \frac{1}{i!j!}\frac{ij}{i+j+1}\left( \frac{a^{i+j}}{(i+1)(j+1)}\right)^K
\end{align*}
and we find that the variance reduction achieved by $\mu^N_\times(\varphi_J)$ far outpaces the growth in  $K$ of the cutoff (and, thus, the computational cost of $\mu^N_\times(\varphi_J)$) necessary to achieve a small bias (Fig.\ \ref{fig:taylor}c).
Indeed, the asymptotic-standard-deviation-to-mean ratio, $\sigma(\varphi)/\mu(\varphi)$, rapidly diverges with $K$ in the case of the standard estimator (Fig.\ \ref{fig:taylor}d, solid). In that of the biased product-form estimator, the ratio, $\sigma_\times(\varphi_J)/\mu(\varphi)$, also diverges with $K$ but at a much slower rate (Fig.\ \ref{fig:taylor}d, dashed). For this reason,  the number of samples necessary for obtain a, say, $1\%$ accuracy estimate of $\mu(\varphi)$ using $\mu^N_\times(\varphi_J)$ remains manageable for a substantially larger range of $a$s and $K$s than in the case of $\mu^N(\varphi)$, even after factoring in the extra cost required to evaluate $\mu^N_\times(\varphi_J)$ for $J$'s large enough that the bias is insignificant. For instance, with an interval length of $1.5$, ten dimensions, a cutoff of seventy, one million samples, and less than one minute of computation time suffices for $\mu^N_\times(\varphi_J)$ to produce a $1\%$ accuracy estimate of $\mu(\varphi)\approx 6.68\times 10^7$ (Fig.\ \ref{fig:taylor}e). Using the same one million samples and the standard estimator, we obtain very poor estimates (Fig.\ \ref{fig:taylor}f). Indeed, its asymptotic variance  equals $4.45\times 10^{29}$ and, so, we would need approximately $ 10^{18}$ samples to obtain $1\%$ accuracy estimates using $\mu^N(\varphi)$, something far beyond current computational capabilities.
\end{example}
\section{Extensions to non-product-form targets}\label{sec:extend} 
While interesting product-form distributions can be found throughout the applied probability literature (ranging from the stationary distributions of Jackson queues~\cite{Jackson1957,Kelly1979} and complex-balanced stochastic reaction networks~\cite{Anderson2010,Cappelletti2016} to the mean-field approximations used in variational inference~\cite{Ranganath2014,Blei2017}), most target distributions encountered in practice are not product-form. In this section, we demonstrate how to combine product-form estimators with other Monte Carlo methodology and expand their utility beyond the product-form case. 

We consider three simple extensions: one to targets that are absolutely continuous with respect to fully-factorized distributions (Section~\ref{sec:is}), resulting in a product-form variant of importance sampling (e.g.\ \cite[Chapter~8]{chopin2020}); another to targets that are absolutely continuous with respect to partially-factorized distributions (Section~\ref{sec:ppf}), resulting in a product-form version of importance sampling squared~\cite{Tran2013}; and a final one to targets with intractable densities arising from latent variable models (Section~\ref{sec:pMH}), resulting in a product-form variant of pseudo-marginal MCMC~\cite{Schmon2021}. In all cases, we show theoretically that the product-form estimators achieve smaller variances than their standard counterparts. Then, we investigate their  performance numerically by applying them to a simple hierarchical model (Section~\ref{sec:numex}).

Lastly, we mention here that a further extension, this time to targets that are mixtures of product-form distributions, can be found in  Appendix~\ref{sec:mop}. Because many distributions may be approximated with these mixtures, this extension potentially opens the door to tackling still more complicated targets (at the expense of introducing some bias). 
\subsection{Importance sampling}\label{sec:is}
Suppose that we are given an unnormalized (but finite) unsigned target measure $\gamma$ that is absolutely continuous with respect to the product-form distribution $\mu$ in Section~\ref{sec:motivation}, and let $w:=d\gamma/d\mu$ be the corresponding Radon-Nikodym derivative.
Instead of the usual important sampling (IS) estimator~\cite[Chapter~8]{chopin2020}, $\gamma^N(\varphi):=\mu^N(w\varphi)$ with $\mu^N$ as in~\eqref{eq:upn},   for $\gamma(\varphi)$, we consider its product-form variant, $\gamma^N_\times(\varphi):=\mu_\times^N(w\varphi)$ with $\mu^N_\times$ as in~\eqref{eq:upnx}. 
The results of  Section~\ref{sec:motivation} immediately give us the following:
\begin{corollary}\label{cor:is}If $\varphi$ is $\gamma$-integrable, then $\gamma^N_\times(\varphi)$ is an unbiased estimator for $\gamma(\varphi)$. If, furthermore, $w\varphi$ belongs to $L^2_\mu$ , then $\gamma^N_\times(\varphi)$ is strongly consistent, asymptotically normal,  and its finite sample and asymptotic variances are bounded above by those of $\gamma^N(\varphi)$: 
\begin{align*}
\text{Var}(\gamma^N_{\times}(\varphi))=\text{Var}(\mu^{N}_\times(w\varphi))&\leq \text{Var}(\mu^{N}(w\varphi))=\text{Var}(\gamma^N(\varphi))\quad\forall N>0,\\
\sigma^2_{\gamma,\times}(\varphi)=\sigma^2_{\times}(w\varphi)&\leq\sigma^2(w\varphi)=\sigma^2_{\gamma}(\varphi),
\end{align*}
where $\text{Var}(\mu^{N}_\times(w\varphi))$ and $\sigma^2_{\times}(w\varphi)$ are as in Theorem~\ref{thrm:classicalK}.
\end{corollary}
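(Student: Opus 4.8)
The plan is to observe that the product-form IS estimator $\gamma^N_\times(\varphi)=\mu^N_\times(w\varphi)$ is nothing but the product-form estimator of Section~\ref{sec:kproducts} applied to the modified test function $w\varphi$, and then to invoke Theorem~\ref{thrm:classicalK} and Corollary~\ref{cor:varbound} essentially verbatim. The starting point is the identity $\gamma(\varphi)=\mu(w\varphi)$, which holds because $w=d\gamma/d\mu$. Thus estimating $\gamma(\varphi)$ with target $\gamma$ is the \emph{same} problem as estimating $\mu(w\varphi)$ with the product-form target $\mu$ and test function $\psi:=w\varphi$, and all the work has already been done.

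First I would translate the integrability hypotheses. Since $\gamma$ is unsigned, $w\geq 0$ holds $\mu$-almost everywhere, so $\mmag{w\varphi}=w\mmag{\varphi}$, and $\gamma$-integrability of $\varphi$ (i.e.\ $\int\mmag{\varphi}\,d\gamma<\infty$) is exactly $\mu$-integrability of $w\varphi$ (i.e.\ $\mu(\mmag{w\varphi})<\infty$). With this equivalence in hand, the unbiasedness claim of Theorem~\ref{thrm:classicalK}, applied to $\psi=w\varphi$, immediately yields $\Ebb{\gamma^N_\times(\varphi)}=\Ebb{\mu^N_\times(w\varphi)}=\mu(w\varphi)=\gamma(\varphi)$ for all $N>0$.

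Next, assuming $w\varphi\in L^2_\mu$, I would invoke the remaining conclusions of Theorem~\ref{thrm:classicalK} for the test function $w\varphi$: these give the strong consistency~\eqref{eq:lln} and the central limit theorem~\eqref{eq:clt} for $\mu^N_\times(w\varphi)=\gamma^N_\times(\varphi)$, with limit $\mu(w\varphi)=\gamma(\varphi)$ and asymptotic variance $\sigma^2_\times(w\varphi)$, which is precisely $\sigma^2_{\gamma,\times}(\varphi)$ by definition. The variance bounds then follow from Corollary~\ref{cor:varbound} applied once more to $w\varphi$: its finite-sample inequality gives $\text{Var}(\mu^N_\times(w\varphi))\leq\text{Var}(\mu^N(w\varphi))$ and its asymptotic inequality gives $\sigma^2_\times(w\varphi)\leq\sigma^2(w\varphi)$. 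These are exactly the two displayed inequalities of the corollary once the definitions $\gamma^N(\varphi)=\mu^N(w\varphi)$, $\gamma^N_\times(\varphi)=\mu^N_\times(w\varphi)$, $\sigma^2_\gamma(\varphi)=\sigma^2(w\varphi)$, and $\sigma^2_{\gamma,\times}(\varphi)=\sigma^2_\times(w\varphi)$ are substituted in.

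Since the result is a direct specialization, I do not anticipate any genuine obstacle; the only point requiring care is the bookkeeping of the integrability hypotheses and the nonnegativity of $w$ (which rests on $\gamma$ being unsigned). Everything else amounts to substituting $w\varphi$ for $\varphi$ in the already-established theorems of Section~\ref{sec:motivation}.
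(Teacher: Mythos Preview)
Your proposal is correct and follows exactly the paper's own approach: the paper's proof is the single sentence ``Replace $\varphi$ with $w\varphi$ in Theorem~\ref{thrm:classicalK} and Corollary~\ref{cor:varbound},'' and you have simply spelled out the bookkeeping (the equivalence of $\gamma$-integrability of $\varphi$ and $\mu$-integrability of $w\varphi$ via $w\geq 0$) that makes this substitution licit.
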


\begin{proof}Replace $\varphi$ with $w\varphi$  in Theorem~\ref{thrm:classicalK} and Corollary~\ref{cor:varbound}.\end{proof}
Corollary~\ref{cor:is} tells us that $\gamma^N_\times(\varphi)$ is more statistically efficient than the conventional IS estimator $\gamma^N(\varphi)$ regardless of whether the target $\gamma$ is product-form or not. In a nutshell, $\mu_\times^N$  is a better approximation to the proposal $\mu$ than $\mu^N$   and, consequently, $\gamma^N_\times(dx)=w(x)\mu_\times^N(dx)$ is a better approximation to $\gamma(dx)=w(x)\mu(dx)$ than $\gamma^N(dx)=w(x)\mu^N(dx)$. Indeed, by constructing all $N^K$ permutations of the tuples $X^1,\dots,X^N$, we reveal other areas of similar $\mu$ probability and further explore the state space. This can be particularly useful when the proposal and target are mismatched as it can  amplify the number of tuples landing in the target's high probability regions (i.e.\ achieving high weights $w$) and, consequently, substantially improve the quality of the finite sample approximation (Figure~\ref{fig:ldlike2}).

\begin{figure*}[h!]
    \begin{center}
    \includegraphics[width=1\textwidth]{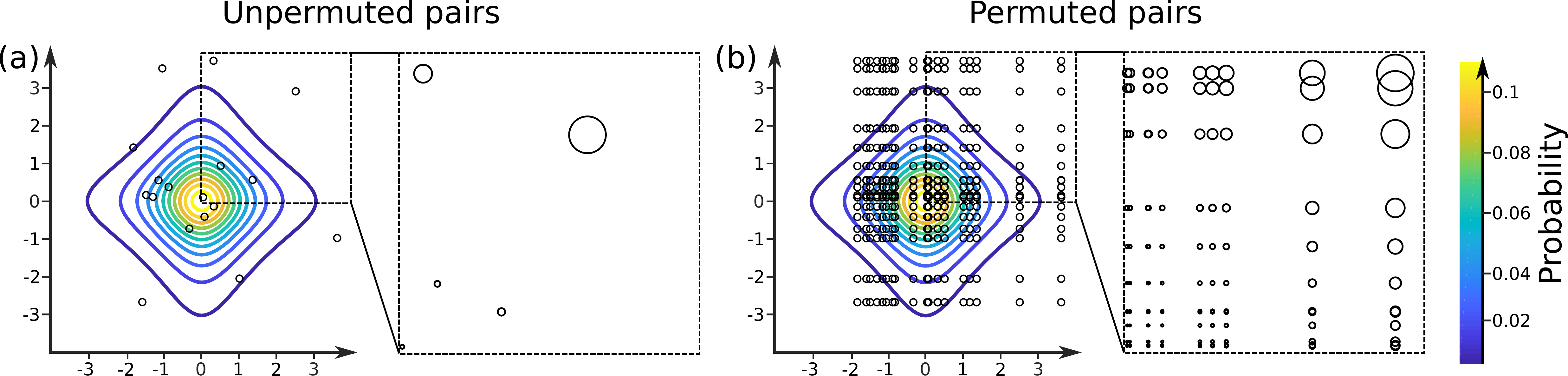}
    \vspace{-30pt}
    \end{center}
\caption{\textbf{Product-form approximations improve state space exploration.} The target, a uniform distribution on $[0,4]^2$ (dashed square), and the proposal, the product of two student-t distributions  with $1.5$ degrees of freedom (contours), are mismatched. Consequently, only $5$ of $20$ pairs independently drawn from the proposal land within the target's support  (\textbf{a}) and the corresponding weighted sample approximation (\textbf{a, inset,} dot diameter proportional to sample weight) is poor. By permuting these pairs, we improve the coverage of the state space (\textbf{b}), increase the number of pairs lying within the target's support, and obtain a much better weighted sample approximation (\textbf{b, inset,} dot diameter proportional to sample weight).}\label{fig:ldlike2}
\end{figure*}

\sloppy Similarly, the self-normalized version $\pi^N_\times(\varphi):=\gamma^N_\times(\varphi)/\gamma^N_\times(S)$
of the product-form IS estimator $\gamma^N_\times(\varphi)$ is a consistent and asymptotically normal estimator for averages $\pi(\varphi)$ with respect to the normalized target $\pi:=\gamma/\gamma(S)$. As in the case of the standard self-normalized importance sampling (SNIS) estimator $\pi^N(\varphi):=\gamma^N(\varphi)/\gamma^N(S)$, the ratio in $\pi^N_\times(\varphi)$'s definition introduces an $\cal{O}(N^{-1})$ bias 
and stops us from obtaining   analytical expression for the finite sample variance (that the bias is $\cal{O}(N^{-1})$ follows from an argument similar to that given for standard SNIS  in~\cite[p.35]{liu2001monte} and requires making assumptions on the higher moments of $\varphi(X^1)$). Otherwise, $\pi^N_\times(\varphi)$'s theoretical properties are analogous to those of the product-form estimator  $\mu^N_\times(\varphi)$ and its importance sampling extension $\gamma^N_\times(\varphi)$:

\begin{corollary}\label{cor:snis}
If $w\varphi$ belongs to $L^2_\mu$, then $\pi^N_\times(\varphi)$ is strongly consistent, asymptotically normal,  and its asymptotic variance is bounded above by that of $\pi^N(\varphi)$: 
\begin{align*}
\sigma^2_{\pi,\times}(\varphi)&=\sigma^2_{\times}(w^\pi[\varphi-\pi(\varphi)])\leq\sigma^2(w^\pi[\varphi-\pi(\varphi)])=\sigma^2_{\pi}(\varphi),
\end{align*}
where $w^\pi$ denotes the normalized weight function $w/\gamma(S)$ and $\sigma^2_{\times}(w^\pi[\varphi-\pi(\varphi)])$ is as in Theorem~\ref{thrm:classicalK}.
\end{corollary}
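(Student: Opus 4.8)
The plan is to treat $\pi^N_\times(\varphi)$ as a ratio of two product-form estimators and to lean on the properties of the latter established in Theorem~\ref{thrm:classicalK} and Corollary~\ref{cor:varbound}. Writing the numerator as $\gamma^N_\times(\varphi)=\mu^N_\times(w\varphi)$ and the denominator as $\gamma^N_\times(S)=\mu^N_\times(w)$, we have $\pi^N_\times(\varphi)=\mu^N_\times(w\varphi)/\mu^N_\times(w)$. For strong consistency I would invoke Theorem~\ref{thrm:classicalK} (equivalently Corollary~\ref{cor:is}) to get $\mu^N_\times(w\varphi)\to\mu(w\varphi)=\gamma(\varphi)$ and $\mu^N_\times(w)\to\mu(w)=\gamma(S)$ almost surely (note $w\varphi\in L^2_\mu\subseteq L^1_\mu$ and $w\in L^1_\mu$ since $\mu(w)=\gamma(S)<\infty$). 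As $\gamma(S)>0$, the continuous mapping theorem applied to $(a,b)\mapsto a/b$ then yields $\pi^N_\times(\varphi)\to\gamma(\varphi)/\gamma(S)=\pi(\varphi)$ almost surely.

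For the central limit theorem the key device is the linearisation
$$\pi^N_\times(\varphi)-\pi(\varphi)=\frac{\mu^N_\times(w\varphi)-\pi(\varphi)\mu^N_\times(w)}{\mu^N_\times(w)}=\frac{\mu^N_\times(w[\varphi-\pi(\varphi)])}{\mu^N_\times(w)},$$
which reduces everything to a single product-form estimator in the numerator. Since $\mu(w[\varphi-\pi(\varphi)])=\gamma(\varphi)-\pi(\varphi)\gamma(S)=0$, applying the CLT of Theorem~\ref{thrm:classicalK} to the fixed, $\mu$-centred test function $\psi:=w[\varphi-\pi(\varphi)]$ gives $N^{1/2}\mu^N_\times(\psi)\Rightarrow\cal{N}(0,\sigma^2_{\times}(\psi))$, while the denominator converges almost surely to $\gamma(S)$. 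Slutsky's theorem then delivers
$$N^{1/2}[\pi^N_\times(\varphi)-\pi(\varphi)]\Rightarrow\cal{N}\!\left(0,\frac{\sigma^2_{\times}(\psi)}{\gamma(S)^2}\right),$$
and, since $\sigma^2_{\times}$ scales quadratically when its argument is multiplied by a constant, $\sigma^2_{\times}(\psi)/\gamma(S)^2=\sigma^2_{\times}(w^\pi[\varphi-\pi(\varphi)])=\sigma^2_{\pi,\times}(\varphi)$, as claimed.

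The variance bound is then immediate: applying the asymptotic inequality of Corollary~\ref{cor:varbound} to $w^\pi[\varphi-\pi(\varphi)]$ gives $\sigma^2_{\times}(w^\pi[\varphi-\pi(\varphi)])\leq\sigma^2(w^\pi[\varphi-\pi(\varphi)])$, and the same linearisation applied to the standard SNIS estimator identifies the right-hand side as its asymptotic variance $\sigma^2_{\pi}(\varphi)$. The one hypothesis to verify before invoking these results is $\psi\in L^2_\mu$; this follows from $w\varphi\in L^2_\mu$ together with $w\in L^2_\mu$ (the latter being needed in any case for $\sigma^2_{\pi}(\varphi)$ to be finite). I expect the only real subtlety to be the handling of the ratio: recognising the linearisation above so that asymptotic normality follows from the single-function CLT of Theorem~\ref{thrm:classicalK} combined with Slutsky's theorem, rather than having to develop a bespoke joint CLT and delta-method argument for the pair $(\mu^N_\times(w\varphi),\mu^N_\times(w))$.
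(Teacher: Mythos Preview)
Your proposal is correct and follows essentially the same route as the paper: the identical linearisation of $\pi^N_\times(\varphi)-\pi(\varphi)$ into a ratio with $\mu^N_\times(w[\varphi-\pi(\varphi)])$ in the numerator, followed by Theorem~\ref{thrm:classicalK} for the CLT, Slutsky's theorem for the denominator, and Corollary~\ref{cor:varbound} for the variance bound. Your observation that $w\in L^2_\mu$ is implicitly required (for $\sigma^2_\pi(\varphi)$ to be finite) is a nice clarification the paper leaves unstated.
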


\begin{proof}\sloppy Given Theorem~\ref{thrm:classicalK} and Corollary~\ref{cor:varbound}, the arguments here follow closely those for standard SNIS. In particular, because  $\pi^N_\times(\varphi)=\gamma^N_\times(\varphi)/\gamma^N_\times(S)=\mu^N_\times(w\varphi)/\mu^N_\times(w)$ and  $\mu(w)=\gamma(S)$,
\begin{align*}\pi^N_\times(\varphi)-\pi(\varphi)&=\frac{\mu^N_\times(w\varphi)}{\mu^N_\times(w)}-\pi(\varphi)=\frac{\mu(w)}{\mu^N_\times(w)}\mu^N_\times\left(\frac{w[\varphi-\pi(\varphi)]}{\gamma(S)}\right)=\frac{\mu(w)}{\mu^N_\times(w)}\mu^N_\times(w^\pi[\varphi-\pi(\varphi)]).\end{align*}
Given that $\mu^N_\times(w)$ tends to $\mu(w)$ almost surely  (and, hence, in probability) as $N$ approaches infinity (Theorem~\ref{thrm:classicalK}), the strong consistency and asymptotic normality of $\pi^N_\times(\varphi)$ then follow from those of $\mu^N_\times(w^\pi[\varphi-\pi(\varphi)])$ (Theorem~\ref{thrm:classicalK})  and   Slutsky's theorem. The asymptotic variance bound follows from that in Corollary~\ref{cor:varbound}.
\end{proof}

This type of approach is best suited for targets $\pi$ possessing at least some product structure. The structure manifest itself in partially-factorized weight functions $w$ and substantially lowers the evaluation costs of $\gamma^N_\times(\varphi)$ and $\pi^N_\times(\varphi)$ for simple test functions $\varphi$, as the following example illustrates.
\begin{example}[A simple hierarchical model]\label{ex:hier}
Consider the following basic hierarchical model:
\begin{align}\label{eq:hierarchical}
Y_{k} \sim\cal{N}(X_k,1)\quad  X_{k} \sim\cal{N}(0,\theta),\quad \forall k\in[K].
\end{align}
It has a single unknown parameter, the variance $\theta$ of the latent variables $X_1,\dots,X_K$, which we infer using a Bayesian approach. That is, we choose a prior $p(d\theta)$ on  $\theta$ and draw inferences from the corresponding posterior,
\begin{equation}\label{eq:picid}\pi(d\theta,dx):=p(d\theta,dx|y)\propto p(d\theta)\prod_{k=1}^K\cal{N}(y_k;x_k,1)\cal{N}(dx_k;0,\theta)=:\gamma(d\theta,dx),\end{equation}
where $y=(y_1,\dots,y_K)$ denotes the vector of observations. For most priors, no analytic expressions for the normalizing constant can be found and we are forced to proceed  numerically. One option is to choose the proposal
\begin{equation}\label{eq:ISprop}\mu(d\theta,dx):= p(d\theta)\prod_{k=1}^K\cal{N}(dx_k;0,1),\end{equation}
in which case
$$w_{IS}(\theta,x):=\frac{d\gamma}{d\mu}(\theta,x)=\prod_{k=1}^K\frac{\cal{N}(y_k;x_k,1)\cal{N}(x_k;0,\theta)}{\cal{N}(x_k;0,1)}.$$
(Note that, were we to be using standard IS instead of product-form variant, the proposal 
\begin{equation}\label{eq:naturalprop}\mu(d\theta,dx):=p(d\theta)\prod_{k=1}^K\cal{N}(dx_k;0,\theta)\end{equation} 
would be the natural choice, a point we return to after the example.) Hence, to estimate the normalizing constant  or any integral w.r.t.\ to a univariate marginal of the posterior, we need to draw samples from $\mu$ and evaluate the product-form estimator $\mu^N_\times(\varphi)$ for a test function of the form $\varphi(\theta,x)=f(\theta)\prod_{k=1}^Kg_k(\theta,x_k)$, the cost of which totals $\cal{O}(KN^2)$ operations because
$$\mu^N_\times(\varphi)=\frac{1}{N}\sum_{m=1}^Nf(\theta^m)\prod_{k=1}^K\left(\frac{1}{N}\sum_{n_k=1}^Ng_k(\theta^m,x_k^{n_k})\right).$$

We return to this in Section~\ref{sec:numex}, where we will make use of the following expression for the (unnormalized) posteriors's $\theta$-marginal available due to the Gaussianity in~\eqref{eq:hierarchical}: 
\begin{equation}\label{eq:inverse-gamma} \gamma(d\theta)=p(d\theta)\prod_{k=1}^K\cal{N}(y_k;0,\theta+1).\end{equation}
Clearly, the above expression opens the door to simpler and more effective methods for  computing integrals with respect to this marginal than estimators targeting the full posterior.  
However, the estimators we discuss can be applied analogously to the many commonplace hierarchical models (e.g.\ see \cite{Gelman2006,Gelman2006a,Koller2009,hoffman2013stochastic,Blei2003} and the many references therein) for which such expressions are not available.
\end{example}

When applying IS, or extensions thereof like SMC, one should choose the proposal to be as close as possible to the target (e.g.\ see~\cite{Agapiou2017}). In this regard, the product-form IS approach is not entirely satisfactory for the above example: by definition, the  proposal must be fully factorized while the target, $\pi$ in~\eqref{eq:picid}, is only partially so (the latent variables are independent only when conditioned on the parameter variable). As we show in the next section, it is straightforward to adapt this product-form IS approach to match such partially-factorized targets.

\subsection{Partially-factorized targets and proposals}\label{sec:ppf}
Consider a target or proposal $\mu$ over a product space $(\Theta\times S,\cal{T}\times\cal{S})$ with the same partial product structure as the target in Example~\ref{ex:hier}:
\begin{equation}\label{eq:cidmu}\mu(d\theta,dx_1,\dots,dx_K)=\mu_0(d\theta)\prod_{k=1}^K\cal{M}_k(\theta,dx_k),\end{equation}
where, for each $k$ in $[K]$,  $\theta\mapsto \cal{M}_k(\theta,dx_k)$ denotes a Markov kernel mapping from $(\Theta,\cal{T})$ to $(S_k,\cal{S}_k)$. Suppose that we are given $M$ i.i.d.\ samples $\theta^1,\dots,\theta^M$ drawn from $\mu_0$ and, for each of these,  $N$ (conditionally) i.i.d.\ samples $X^{m,1},\dots,X^{m,N}$ drawn from the product kernel $\cal{M}(\theta,dx):=\prod_{k=1}^K\cal{M}_k(\theta,dx_k)$ evaluated at $\theta^m$.
Given a test function $\varphi$ on $\Theta\times S$, consider the following `partially product-form' estimator for $\mu(\varphi)$:
\begin{equation}\label{eq:ppf}\mu_{\times}^{M,N}(\varphi):=\frac{1}{M}\sum_{m=1}^M\left(\frac{1}{N^K}\sum_{\bm{n}\in[N]^K}\varphi(\theta^m,X^{m,\bm{n}})\right)=\frac{1}{MN^K}\sum_{m=1}^M\sum_{\bm{n}\in[N]^K}\varphi(\theta^m,X^{m,\bm{n}})\end{equation}
for all $M,N>0$. It is well-founded (for simplicity, we only consider the estimator's asymptotics as $M\to\infty$ with $N$ fixed, but other limits can be studied by combining the approaches in Appendix~\ref{app:classicalproof} with those in the proof of Theorem~\ref{thrm:classicalK2}):
\begin{theorem}
\label{thrm:classicalK2}
If $\varphi$ is $\mu$-integrable with $\mu$ as in~\eqref{eq:cidmu}, then $\mu^{M,N}_\times(\varphi)$ in~\eqref{eq:ppf} is unbiased and strongly consistent: for all $N>0$,
\begin{align*}\Ebb{\mu^{M,N}_\times(\varphi)}=\mu(\varphi)\quad\forall M>0,\quad 
\lim_{M\to\infty}\mu^{M,N}_\times(\varphi)= \mu(\varphi)\enskip\textrm{almost surely.}\end{align*}
If, furthermore, $\varphi$ belongs to $L^2_\mu$, then $\cal{M}_{[K]\backslash A}(\varphi)$ belongs to $L^2_{\mu_0\otimes\cal{M}_A}$ for all subsets $A$ of $[K]$, where $\cal{M}_A(\theta,dx_A):=\prod_{k\in A}\cal{M}_k(\theta,dx_k)$, and the estimator is asymptotically normal: 
\begin{equation}\label{eq:ppfclt}
M^{1/2}[\mu^{M,N}_\times(\varphi)-\mu(\varphi)]\Rightarrow \cal{N}(0,\sigma^2_{\times,N}(\varphi))\enskip\text{as $M\to\infty$,}\quad\forall N>0,\end{equation}
where  $\Rightarrow$ denotes convergence in distribution and
$$\sigma_{\times,N}^2(\varphi):=\mu_0([\cal{M}\varphi-\mu(\varphi)]^2)+\sum_{\emptyset\neq A\subseteq [K]}\frac{1}{N^{\mmag{A}}}\sum_{B\subseteq A}(-1)^{\mmag{A}-\mmag{B}}\mu_0(\cal{M}_B[\cal{M}_{[K]\backslash B}\varphi-\cal{M}\varphi]^2).$$
For any $N,M>0$, the estimator's variance is given by $\text{Var}(\mu^{M,N}_\times(\varphi))=\sigma^2_{\times,N}(\varphi)/M$.
\end{theorem}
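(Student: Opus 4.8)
The plan is to exploit that the outer average over $m$ in \eqref{eq:ppf} is an empirical average of independent and identically distributed summands. Writing $Z_m:=N^{-K}\sum_{\bm{n}\in[N]^K}\varphi(\theta^m,X^{m,\bm{n}})$, I observe that each $Z_m$ is a measurable function of the block $(\theta^m,X^{m,1},\dots,X^{m,N})$ alone and that, by construction, these blocks are i.i.d.\ across $m$. Hence $Z_1,\dots,Z_M$ are i.i.d.\ and $\mu^{M,N}_\times(\varphi)=M^{-1}\sum_{m=1}^MZ_m$ is a plain average of i.i.d.\ variables. Everything then reduces to conditioning on $\theta^m$ and invoking the product-form results of Theorem~\ref{thrm:classicalK} for the inner sum.

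First I would pin down the mean. Conditioned on $\theta^m$, the samples $X^{m,1},\dots,X^{m,N}$ are i.i.d.\ draws from the product measure $\cal{M}(\theta^m,\cdot)=\prod_k\cal{M}_k(\theta^m,\cdot)$, so each permuted tuple $X^{m,\bm{n}}$ again has law $\cal{M}(\theta^m,\cdot)$ — this is precisely the permutation identity \eqref{eq:permute} applied conditionally. The unbiasedness half of Theorem~\ref{thrm:classicalK} then gives $\Ebb{Z_m\mid\theta^m}=\cal{M}\varphi(\theta^m)$, and integrating over $\theta^m\sim\mu_0$ yields $\Ebb{Z_m}=\mu_0(\cal{M}\varphi)=\mu(\varphi)$; the same computation with $\mmag{\varphi}$ in place of $\varphi$ shows $Z_m$ is integrable whenever $\varphi$ is $\mu$-integrable. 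Unbiasedness of $\mu^{M,N}_\times(\varphi)$ is then immediate, and strong consistency follows from the strong law of large numbers applied to the i.i.d.\ sequence $(Z_m)$. The claim $\cal{M}_{[K]\backslash A}(\varphi)\in L^2_{\mu_0\otimes\cal{M}_A}$ I would obtain from conditional Jensen: $\cal{M}_{[K]\backslash A}\varphi$ is a conditional expectation of $\varphi$ given $(\theta,x_A)$, whence $\mu_0\otimes\cal{M}_A((\cal{M}_{[K]\backslash A}\varphi)^2)\leq\mu(\varphi^2)<\infty$. The same convexity bound, $Z_1^2\leq N^{-K}\sum_{\bm{n}}\varphi(\theta^1,X^{1,\bm{n}})^2$, shows $Z_1\in L^2$ and hence that $\text{Var}(Z_1)$ is finite.

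The heart of the argument is the variance, which I would compute via the law of total variance, $\text{Var}(Z_1)=\text{Var}(\Ebb{Z_1\mid\theta^1})+\Ebb{\text{Var}(Z_1\mid\theta^1)}$. The first term is $\text{Var}(\cal{M}\varphi(\theta^1))=\mu_0([\cal{M}\varphi-\mu(\varphi)]^2)$, the leading term of $\sigma^2_{\times,N}(\varphi)$. For the second, conditioned on $\theta^1$ the inner sum is exactly a product-form estimator with marginals $\cal{M}_k(\theta^1,\cdot)$ and test function $\varphi(\theta^1,\cdot)$, so the variance formula \eqref{eq:vark} of Theorem~\ref{thrm:classicalK} applies verbatim. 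The bookkeeping to carry out is the identity $\mu_{A\backslash B}(\mu_{A^c}(\varphi))=\mu_{[K]\backslash B}(\varphi)$, valid since $(A\backslash B)\cup A^c=[K]\backslash B$ when $B\subseteq A$, together with $\mu_A(\mu_{A^c}(\varphi))=\cal{M}\varphi$; these rewrite each conditional $\sigma^2_{A,B}$ as $\cal{M}_B(\theta^1,[\cal{M}_{[K]\backslash B}\varphi-\cal{M}\varphi]^2)$. Taking $\mu_0$-expectations reproduces the summation term of $\sigma^2_{\times,N}(\varphi)$ exactly.

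I expect this index-matching step to be the main obstacle — not because it is deep, but because one must carefully track which coordinates are integrated against the kernel and which remain, and verify that the conditional marginals $\mu_{A^c}(\varphi)$, $\mu_{A\backslash B}(\psi)$ of Theorem~\ref{thrm:classicalK} line up with the stated $\cal{M}_B$, $\cal{M}_{[K]\backslash B}$ notation. Once the conditional variance is matched, the identity $\text{Var}(Z_1)=\sigma^2_{\times,N}(\varphi)$ follows, giving $\text{Var}(\mu^{M,N}_\times(\varphi))=\text{Var}(Z_1)/M=\sigma^2_{\times,N}(\varphi)/M$ by independence; and since the $Z_m$ are i.i.d.\ with this finite variance, the classical central limit theorem delivers \eqref{eq:ppfclt}.
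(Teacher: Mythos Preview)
Your proposal is correct and follows essentially the same approach as the paper: both reduce to writing $\mu^{M,N}_\times(\varphi)=M^{-1}\sum_m Z_m$ as an average of i.i.d.\ summands, condition on $\theta^m$ to invoke Theorem~\ref{thrm:classicalK} for the inner product-form average, and then combine the LLN, CLT, Jensen's bound $\Ebb{Z_1^2}\leq\mu(\varphi^2)$, and the law of total variance. Your index-matching step (rewriting $\sigma^2_{A,B}(\mu_{A^c}(\varphi))$ via $(A\backslash B)\cup A^c=[K]\backslash B$) is in fact slightly more explicit than the paper, which simply defines the conditional variance as ``the RHS of~\eqref{eq:vark} with $\cal{M}(\theta,dx)$ replacing $\mu(dx)$'' and leaves that identification implicit.
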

\begin{proof}See Appendix~\ref{app:classicalK2}.
\end{proof}
The partially product-form estimator $\mu^{M,N}_\times(\varphi)$ is more statistically efficient than its standard counterpart:\
\begin{corollary}\label{cor:ppf}For any  $\varphi$ belonging to $L^2_\mu$ and $N>0$,
$$\text{Var}(\mu_\times^{M,N}(\varphi))\leq \text{Var}(\mu^{M,N}(\varphi))\enskip \forall M>0,\quad \sigma_{\times,N}^2(\varphi)\leq \sigma_{N}^2(\varphi),$$
where $\mu^{M,N}(\varphi):=\frac{1}{MN}\sum_{m=1}^M\sum_{n=1}^N\varphi(\theta^n,X^{m,n})$  and $\sigma_{N}^2(\varphi)$ denotes its asymptotic (in $M$) variance.
\end{corollary}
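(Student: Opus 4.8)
The plan is to mirror the structure of Corollary~\ref{cor:varbound}, which established the analogous inequality in the fully product-form case, and to exploit the fact that the partially product-form estimator is, conditional on the outer samples $\theta^1,\dots,\theta^M$, built from fully product-form estimators. The key observation is that the asymptotic-variance claim $\sigma_{\times,N}^2(\varphi)\leq\sigma_N^2(\varphi)$ and the finite-sample claim $\text{Var}(\mu_\times^{M,N}(\varphi))\leq\text{Var}(\mu^{M,N}(\varphi))$ differ only by the factor $1/M$ (both estimators are outer averages of $M$ i.i.d.\ blocks, each attached to a single $\theta^m$), so it suffices to prove the per-block inequality and then divide by $M$.

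First I would condition on a single parameter draw $\theta$. Given $\theta$, the inner average $\frac{1}{N^K}\sum_{\bm n}\varphi(\theta,X^{\bm n})$ is precisely a fully product-form estimator $\mu_\times^N$ of $\cal{M}\varphi(\theta)=\int\varphi(\theta,x)\cal{M}(\theta,dx)$ with respect to the product kernel $\cal{M}(\theta,\cdot)=\prod_k\cal{M}_k(\theta,\cdot)$, while the standard inner average $\frac{1}{N}\sum_n\varphi(\theta,X^{m,n})$ is the corresponding conventional estimator $\mu^N$. Applying Corollary~\ref{cor:varbound} pointwise in $\theta$ gives
\begin{equation*}
\text{Var}\big(\mu_\times^N(\varphi(\theta,\cdot))\,\big|\,\theta\big)\leq\text{Var}\big(\mu^N(\varphi(\theta,\cdot))\,\big|\,\theta\big)\quad\text{for every }\theta.
\end{equation*}
Both conditional estimators are conditionally unbiased for the same quantity $\cal{M}\varphi(\theta)$, so they share the same conditional mean.

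Next I would assemble the unconditional variances via the law of total variance. Writing $V_\times(\theta):=\text{Var}(\mu_\times^N(\varphi)\mid\theta)$ and $V(\theta):=\text{Var}(\mu^N(\varphi)\mid\theta)$, and noting that the conditional mean $\cal{M}\varphi(\theta)$ is identical for both estimators, the total-variance decomposition yields
\begin{equation*}
\text{Var}\Big(\tfrac{1}{N^K}\textstyle\sum_{\bm n}\varphi(\theta,X^{\bm n})\Big)=\mu_0(V_\times)+\mu_0\big([\cal{M}\varphi-\mu(\varphi)]^2\big),
\end{equation*}
and likewise for the standard version with $V_\times$ replaced by $V$. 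Since the ``between-$\theta$'' term $\mu_0([\cal{M}\varphi-\mu(\varphi)]^2)$ is common to both (it does not depend on the inner sampling scheme) and $\mu_0(V_\times)\leq\mu_0(V)$ by integrating the pointwise bound against $\mu_0$, the single-block variance inequality follows. Dividing by $M$ gives the finite-sample statement, and inspecting the $N\to\infty$ constant in Theorem~\ref{thrm:classicalK2} (the between-term $\mu_0([\cal{M}\varphi-\mu(\varphi)]^2)$ survives while the within-term is the $\mu_0$-average of the product-form asymptotic variances) gives $\sigma_{\times,N}^2(\varphi)\leq\sigma_N^2(\varphi)$.

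The main obstacle I anticipate is a bookkeeping one rather than a conceptual one: I must verify carefully that the ``between-$\theta$'' contribution really is identical for the two estimators, i.e.\ that permuting the inner samples does not perturb the conditional mean $\cal{M}\varphi(\theta)$. This is true because both inner estimators are conditionally unbiased for $\cal{M}\varphi(\theta)$ by Theorem~\ref{thrm:classicalK} applied to the kernel $\cal{M}(\theta,\cdot)$, but it requires checking that the integrability hypothesis ($\varphi\in L^2_\mu$) transfers to conditional square-integrability of $\varphi(\theta,\cdot)$ under $\cal{M}(\theta,\cdot)$ for $\mu_0$-almost every $\theta$ — which is exactly the statement, already asserted in Theorem~\ref{thrm:classicalK2}, that $\cal{M}_{[K]\backslash A}(\varphi)\in L^2_{\mu_0\otimes\cal{M}_A}$. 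With that in hand the pointwise application of Corollary~\ref{cor:varbound} is legitimate and the rest is the routine total-variance assembly sketched above; I would defer the detailed computation to Appendix~\ref{app:ppfmvu}.
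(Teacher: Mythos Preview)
Your proposal is correct and follows essentially the same route as the paper's proof: decompose the single-block variance via the law of total variance, observe that the between-$\theta$ term $\mu_0([\cal{M}\varphi-\mu(\varphi)]^2)$ is common to both estimators, and bound the within-$\theta$ term pointwise by applying Corollary~\ref{cor:varbound} to the product kernel $\cal{M}(\theta,\cdot)$, then divide by $M$. One small slip: in your final sentence you refer to ``the $N\to\infty$ constant'', but the asymptotic variance here is in $M$, not $N$; indeed the single-block variance you have already bounded \emph{is} $\sigma^2_{\times,N}(\varphi)$, so no separate inspection is needed (also, the relevant appendix is~\ref{app:classicalK2}, not~\ref{app:ppfmvu}).
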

\begin{proof}See Appendix~\ref{app:classicalK2}.
\end{proof}
In fact, modulo a small caveat (cf.\ Remark~\ref{rem:ppfcaveat} below), $\mu^{M,N}_\times(\varphi)$ yields  the best  unbiased estimates of $\mu(\varphi)$ achievable using only the knowledge that  $\mu$ is partially-factorized and $M$ i.i.d.\ samples drawn from $\mu_0\otimes\cal{M}^N$: a perhaps unsurprising fact given that it is the composition of two minimum variance unbiased estimators (Theorem~\ref{thrm:mvu}).
\begin{theorem}
\label{thrm:mvuppf}Suppose that $\cal{T}$ contains all singleton sets (i.e.\ $\{\theta\}$ for all $\theta$ in $\Theta$). For any given measurable real-valued function $\varphi$ on $\Theta\times S$, $\mu_\times^{M,N}(\varphi)$ is a minimum variance unbiased estimator for $\mu(\varphi)$: if $f$ is a measurable real-valued function on $(\Theta \times S^N)^M$ such that
$$\Ebb{f((\theta^{m},X^{m,1},\dots,X^{m,N})_{m=1}^M)}=\mu(\varphi)$$
whenever $(\theta^{m},X^{m,1},\dots,X^{m,N})_{m=1}^M$ is an i.i.d.\ sequence drawn from $\mu_0\otimes \cal{M}^N$, 
for all partially-factorized $\mu=\mu_0\otimes \cal{M}$ on $\Theta\times S$ satisfying $\mu(\mmag{\varphi})<\infty$ and  
\begin{equation}\label{eq:continuous}
\mu_0(\{\theta\})=0\quad\forall \theta\in\Theta,
\end{equation} 
then 
$$\text{Var}(f((\theta^{m},X^{m,1},\dots,X^{m,N})_{m=1}^M))\geq\text{Var}(\mu^{M,N}_\times(\varphi)).$$
\end{theorem}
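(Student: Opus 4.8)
The plan is to establish the result by a two-level Lehmann--Scheff\'e argument that mirrors the compositional structure of $\mu_\times^{M,N}(\varphi)$, adapting the Halmos-style reasoning already used for Theorem~\ref{thrm:mvu}. Writing $Z^m:=(\theta^m,X^{m,1},\dots,X^{m,N})$ for the $m$-th block, the blocks $Z^1,\dots,Z^M$ are i.i.d.\ draws from $\nu:=\mu_0\otimes\cal{M}^N$, and $\mu_\times^{M,N}(\varphi)=\frac1M\sum_{m=1}^M h(Z^m)$ with $h(Z^m):=N^{-K}\sum_{\bm{n}\in[N]^K}\varphi(\theta^m,X^{m,\bm{n}})$. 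It therefore suffices to exhibit a statistic $T$ of the data that is (i) sufficient and (ii) complete for the family $\{\mu_0\otimes\cal{M}^N\}$ ranging over non-atomic $\mu_0$ and product kernels $\cal{M}=\prod_k\cal{M}_k$, and then to note that $\mu_\times^{M,N}(\varphi)$ is a function of $T$ which is unbiased by Theorem~\ref{thrm:classicalK2}. Lehmann--Scheff\'e will then identify it as the unique unbiased function of $T$, hence the MVUE.

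For sufficiency I would identify the group $G$ of data transformations leaving $\nu^{\otimes M}$ invariant: permutations of the $M$ blocks together with, \emph{within each block}, independent permutations of each of the $K$ component-columns $(X^{m,1}_k,\dots,X^{m,N}_k)$. Invariance under block permutations is immediate; invariance under the component-wise permutations is exactly the product-form property $\cal{M}(\theta,\cdot)=\prod_k\cal{M}_k(\theta,\cdot)$, which is also what makes $h$, and hence $\mu_\times^{M,N}(\varphi)$, $G$-invariant. The maximal invariant is the unordered collection $T=\{(\theta^m,\mathrm{ord}^m_1,\dots,\mathrm{ord}^m_K)\}_{m\in[M]}$, where $\mathrm{ord}^m_k$ is the empirical measure (order statistic) of the $k$-th column of block $m$. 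Sufficiency of $T$ follows from a factorization argument: conditionally on the $\theta^m$, the likelihood of the full data factorizes over blocks and over components and depends only on the column empirical measures, so the cross-component pairing within a block is ancillary. Rao--Blackwellizing an arbitrary unbiased $f$ by conditioning on $T$ then yields a $T$-measurable unbiased estimator of no larger variance.

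The main obstacle is completeness of $T$, which is where assumption~\eqref{eq:continuous} and the measurability of singletons enter. My plan is to reduce it to two completeness facts that are already available: the \emph{inner} one, that the vector of component-wise order statistics is complete for the family of product kernels (precisely the statement underpinning Theorem~\ref{thrm:mvu}, applied conditionally on $\theta$), and the \emph{outer} one, the classical fact that the empirical measure of i.i.d.\ draws is complete for the family of non-atomic distributions on $(\Theta,\cal{T})$. Non-atomicity of $\mu_0$ guarantees that the $\theta^m$ are almost surely distinct, which is what lets me disintegrate the zero-expectation condition over the observed $\theta$-values and invoke the inner completeness separately at each, while singleton-measurability lets me localize in $\theta$. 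Concretely, I expect to assume $\Ebb{\psi(T)}=0$ for every admissible $\nu$, expand it as an integral against $\mu_0^{\otimes M}$ of a function built from the inner expectations, and---by varying $\mu_0$ over non-atomic measures and the $\cal{M}_k$ over arbitrary kernels and matching coefficients in a polynomial-in-masses expansion \`a la Halmos---force $\psi\equiv0$. Interleaving the two levels (rather than treating them in isolation) and discharging the attendant measure-theoretic technicalities is the delicate part; the non-atomicity hypothesis is exactly what rules out the improvement one could otherwise obtain by pooling the inner samples of distinct blocks that happen to share a common $\theta$-value, and is thus the source of the caveat in Remark~\ref{rem:ppfcaveat}.

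With $T$ shown to be complete and sufficient, the conclusion is immediate: the Rao--Blackwellized version of any competing unbiased estimator $f$ is a $T$-measurable unbiased estimator, and by completeness it coincides almost surely with the (also $T$-measurable and unbiased) estimator $\mu_\times^{M,N}(\varphi)$; the Rao--Blackwell inequality then gives $\text{Var}(f)\geq\text{Var}(\mu_\times^{M,N}(\varphi))$, as claimed.
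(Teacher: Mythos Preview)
Your proposal is correct and is essentially the same approach as the paper's, recast in Lehmann--Scheff\'e language: the paper's Lemma~\ref{lem:symppf} is precisely the completeness of your statistic $T$ (proved via the very Halmos-style polynomial-in-masses expansion you describe, with the two levels interleaved into a single polynomial in the $w_0^{1:M}$ and $w_{1:K}^{1:M,1:N}$ weights), and the paper's symmetrization-plus-Cauchy--Schwarz step is exactly your Rao--Blackwellization onto $T$. The only difference is packaging; the paper carries out the equivalent computations by hand rather than invoking the abstract framework.
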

\begin{proof}See Appendix~\ref{app:ppfmvu}.\end{proof}
\begin{remark}[The importance of~\eqref{eq:continuous}]\label{rem:ppfcaveat} Consider the extreme scenario that $\mu_0$ is a Dirac delta at some $\theta^*$, so that $\theta^1=\dots=\theta^M=\theta^*$ with probability one and
$$\mu_{\times}^{M,N}(\varphi)=\frac{1}{M}\sum_{m=1}^M\frac{1}{N^K}\sum_{\bm{n}\in[N]^K}\varphi(\theta^*,X^{m,\bm{n}})\quad\text{almost surely}.$$
In this case, we are  clearly better off (at least in terms estimator variance) stacking all of our $X$ samples into one big ensemble and replacing the partially product-form estimator with the  (fully) product-form estimator,
$$\mu_{\times}^{MN}(\varphi)=\frac{1}{(MN)^K}\sum_{\bm{l}\in[MN]^K}\varphi(\theta^*,\tilde{X}^{\bm{l}}),$$
where $(\tilde{X}^{l})_{l\in[MN]}$ denotes $(X^{m,n})_{m\in[M],n\in[N]}$ in vectorized form (indeed Theorem~\ref{thrm:mvu} implies that $\mu_{\times}^{MN}(\varphi)$ is a minimum variance unbiased estimator in this situation). More generally, note that, because
\begin{align*}\mu^2_0(\{\theta^1=\theta^2\})=\int 1_{\{\theta^1=\theta^2\}}\mu^2_0(d\theta^1,d\theta^2)=\int\left(\int 1_{\{\theta^1=\theta^2\}}\mu_0(d\theta^1)\right)\mu_0(d\theta^2)=\int\mu_0(\{\theta\})\mu_0(d\theta),\end{align*}
$\mu_0$ not possessing atoms, i.e.\ \eqref{eq:continuous}, is equivalent to $\mu^2_0(\{\theta^1=\theta^2\})=0$. It is then straightforward to argue that~\eqref{eq:continuous} is equivalent to the impossibility of several $\theta^m$ coinciding or, in other words, to
\begin{equation}\label{eq:continuous1}
\mu_0^M(\{\theta^i\neq\theta^j\enskip\forall i\neq j\})=1.
\end{equation} 
Were this not to be the case, the estimator in~\eqref{eq:ppf} would not posses the MVU property. To recover it, we would need to amend the estimator as follows: `if several $\theta^m$s take the same value, first stack their corresponding $X^{m,1},\dots,X^{m,N}$ samples, and then apply a product-form estimator to the stacked samples'. However, to not overly complicate this section's exposition and Theorem~\ref{thrm:mvuppf}'s proof, we restricted ourselves to distributions satisfying~\eqref{eq:continuous}. 
\end{remark}
We are now in a position to revisit Example~\ref{ex:hier} and better adapt the proposal to the target. This leads to a special case of an algorithm
  known as `importance sampling squared' or `IS$^2$'~\cite{Tran2013}:
\begin{example}[A simple hierarchical model, revisited]\label{ex:hier1} Consider again the model in Example~\ref{ex:hier}. Recall that our previous choice of proposal did not quite capture the conditional independence structure in the target $\pi$: the former was fully factorized while the latter is only partially so. It seems more natural to instead use the proposal in~\eqref{eq:naturalprop}, which is also easy to sample from but both mirrors $\pi$'s independence structure  and leads to further cancellations in the weight function  (in particular, it no longer depends on $\theta$):
$$w_{IS^2}(x):=\prod_{k=1}^K\cal{N}(y_k;x_k,1)=\frac{d\gamma}{d\mu}(\theta,x).$$
It follows that, to estimate the normalizing constant  or any integral w.r.t.\ to a univariate marginal of the posterior, we need to draw samples from $\mu$ and evaluate the partially product-form estimator $\mu^{M,N}_\times(\varphi)$ for a test function of the form $\varphi(\theta,x)=f(\theta)\prod_{k=1}^Kg_k(x_k)$. The total cost then reduces to  $\cal{O}(KMN)$, because
$$\mu^{M,N}_\times(\varphi)=\frac{1}{M}\sum_{m=1}^Mf(\theta^m)\prod_{k=1}^K\left(\frac{1}{N}\sum_{n_k=1}^Ng_k(X_k^{m,n_k})\right).$$
We also return to this is Section~\ref{sec:numex}.
\end{example}
\subsection{Grouped independence Metropolis-Hastings}\label{sec:pMH}
As a further example of how one may embed product-form estimators within more sophisticated Monte Carlo methodology and exploit the independence structure present in the problem, we revisit Beaumont's Grouped Independence Metropolis-Hastings (GIMH,~\cite{Beaumont2003}),  a simple and well-known pseudo-marginal MCMC sampler~\cite{Andrieu2009}. Like many of these samplers, it is intended to tackle targets whose densities cannot be evaluated pointwise but are marginals of higher-dimensional distributions   whose densities can be  evaluated pointwise.  Our inability to evaluate the target's density precludes us from directly applying the Metropolis-Hastings algorithm (MH, e.g.\ see~\cite[Chap.\ XIII]{Asmussen2007}) as we cannot compute the necessary  acceptance probabilities. For instance, in the case of a target $\pi(d\theta)$ on a space $(\Theta,\cal{T})$ and an MH proposal $Q(\theta,d\tilde{\theta})$ with respective densities $\pi(\theta)$ and  $Q(\theta,\tilde{\theta})$, we would need to evaluate 
$$1\wedge \frac{\pi(\tilde{\theta})Q(\theta,\tilde{\theta})}{\pi(\theta)Q(\tilde{\theta},\theta)}$$
where $\theta$ denotes the chain's current state and $\tilde{\theta}\sim Q(\theta,\cdot)$ the proposed move. GIMH instead replaces the intractable $\pi(\theta)$ and $\pi(\tilde{\theta})$ in the above with importance sampling estimates thereof: if $\pi(\theta,x)$  denotes the density of the higher-dimensional distribution $\pi(d\theta,dx)$ whose marginal is $\pi(d\theta)$, and $w(\theta,x):=\pi(\theta,x)/\cal{M}(\theta,x)$ for a given Markov kernel $\cal{M}(\theta,dx)$ with density $\cal{M}(\theta,x)$,
\begin{equation}\label{eq:acceptest}
\pi^N(\theta)=\frac{1}{N}\sum_{n=1}^Nw(\theta,X^n),\quad \pi^N(\tilde{\theta})=\frac{1}{N}\sum_{n=1}^Nw(\tilde{\theta},\tilde{X}^n),
\end{equation}
where  $X^{1},\dots,X^N$ and $\tilde{X}^1,\dots,\tilde{X}^N$ are i.i.d.\ samples respectively drawn from $\cal{M}(\theta,\cdot)$ and $\cal{M}(\tilde{\theta},\cdot)$. Key in Beaumont's approach is that the samples are recycled from one iteration to another: if $Z^1,\dots,Z^N$ and $\tilde{Z}^1,\dots\tilde{Z}^N$ denote the i.i.d.\ samples used in the previous iteration, then $(X^1,\dots,X^N):=(Z^1,\dots,Z^N)$ if the previous move was rejected and $(X^1,\dots,X^M):=(\tilde{Z}^1,\dots,\tilde{Z}^N)$ if it was accepted.

As explained in~\cite{Andrieu2009,Andrieu2015}, the algorithm's correctness does not require the density estimates to be generated by~\eqref{eq:acceptest}, only for them to be unbiased. In particular, if the estimates are unbiased, GIMH may be interpreted as an MH algorithm on an expanded state space with an extension of $\pi(d\theta)$ as its invariant distribution. Consequently, provided that the density estimator is suitably well-behaved, GIMH returns consistent and asymptotically normal estimates of the target under conditions comparable to those for standard MH algorithms (e.g.\ the GIMH chain is uniformly ergodic whenever the associated `marginal' chain is and the estimator is uniformly bounded~\cite{Andrieu2009}; see \cite{Andrieu2015} for further refinements). Consequently, if the kernel is product-form (i.e.\ $\cal{M}(\theta,dx)$ is product-form for each $\theta$), we may replace~\eqref{eq:acceptest} with their product-form counterparts:
\begin{equation}\label{eq:acceptestpf}
\pi^N_\times(\theta)=\frac{1}{N^K}\sum_{\bm{n}\in[N]^K}w(\theta,X^{\bm{n}}),\quad \pi^N_\times(\tilde{\theta})=\frac{1}{N^K}\sum_{\bm{n}\in[N]^K}w(\tilde{\theta},\tilde{X}^{\bm{n}}),
\end{equation}
where $K$ denotes the dimensionality of the $x$-variables (the unbiasedness follows from $X^{\bm{n}}$ and $\tilde{X}^{\bm{n}}$ having respective laws $\cal{M}(\theta,dx)$ and $\cal{M}(\tilde{\theta},d\tilde{x})$ for any $\bm{n}$ in  $[N]^K$). Thanks to the results in \cite{Andrieu2016}, it is straightforward to show that this choice leads to lower estimator variances, at least asymptotically:
\begin{corollary}\label{cor:pmh}Let $(\theta^{m,N})_{m=1}^\infty$ and $(\theta_\times^{m,N})_{m=1}^\infty$ be the GIMH chains generated using~\eqref{eq:acceptest} and \eqref{eq:acceptestpf}, respectively, and the same proposal $Q(\theta,d\theta)$. If $\varphi$ belongs to $L^2_\pi$, then 
$$\lim_{M\to\infty}\textrm{Var}\left(\frac{1}{\sqrt{M}}\sum_{m=1}^M\varphi(\theta^{m,N}_\times)\right)\leq \lim_{M\to\infty}\textrm{Var}\left(\frac{1}{\sqrt{M}}\sum_{m=1}^M\varphi(\theta^{m,N})\right)\quad\forall N>0.$$
\end{corollary}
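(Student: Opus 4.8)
The plan is to reduce the claim to a single comparison between the two GIMH chains via the result of Andrieu and Vihola~\cite{Andrieu2016}, which orders the asymptotic variances of pseudo-marginal chains according to the convex order of their multiplicative noise in the density estimates. The key structural observation is that both chains share exactly the same proposal $Q(\theta,d\tilde\theta)$, the same marginal target $\pi(d\theta)$, and the same recycling mechanism; they differ only in how the intractable densities $\pi(\theta)$ and $\pi(\tilde\theta)$ are estimated, namely via the standard average~\eqref{eq:acceptest} versus its product-form counterpart~\eqref{eq:acceptestpf}. Since both estimators are unbiased (as noted in the text, $X^{\bm n}$ and $\tilde X^{\bm n}$ have the correct laws $\cal{M}(\theta,dx)$ and $\cal{M}(\tilde\theta,d\tilde x)$ for every $\bm n$), both chains admit the same invariant distribution on the expanded state space and fall squarely within the pseudo-marginal framework to which~\cite{Andrieu2016} applies.

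First I would recall the precise criterion from~\cite{Andrieu2016}: writing the density estimate at $\theta$ as $\pi(\theta)W_\theta$ for a nonnegative unit-mean noise variable $W_\theta$, the asymptotic variance of $\frac1{\sqrt M}\sum_m\varphi(\theta^{m,N})$ is monotone in the convex order of the conditional laws of the noise, i.e.\ if $W_\theta^{\times}\preceq_{cx}W_\theta$ for $\pi$-almost every $\theta$, then the product-form chain has no larger asymptotic variance for every $\varphi\in L^2_\pi$. Thus the whole statement will follow once I establish, for each fixed $\theta$, the convex ordering
$$
\frac{\pi^N_\times(\theta)}{\pi(\theta)}\preceq_{cx}\frac{\pi^N(\theta)}{\pi(\theta)},
$$
equivalently $\Ebb{h(\pi^N_\times(\theta))}\le\Ebb{h(\pi^N(\theta))}$ for every convex $h$, since both estimators share the mean $\pi(\theta)$.

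The central step, and the one I expect to be the main obstacle, is proving this per-$\theta$ convex-order inequality between the product-form weight average $\pi^N_\times(\theta)=\mu^N_\times(w(\theta,\cdot))$ and the standard average $\pi^N(\theta)=\mu^N(w(\theta,\cdot))$, where the target being averaged is the product-form measure $\cal{M}(\theta,dx)=\prod_{k}\cal{M}_k(\theta,dx_k)$. Here I would hold $\theta$ fixed and apply the machinery of Section~\ref{sec:motivation}: the product-form estimator is the $L^2$-optimal (MVU) estimator built from the same samples, which immediately yields the variance inequality $\text{Var}(\pi^N_\times(\theta))\le\text{Var}(\pi^N(\theta))$ from Corollary~\ref{cor:varbound}, but convex order is strictly stronger than variance order and must be argued separately. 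The cleanest route is to exhibit $\pi^N(\theta)$ as a conditional expectation—or more precisely a suitable Rao--Blackwellization—of a richer object whose symmetrization produces $\pi^N_\times(\theta)$, so that $\pi^N_\times(\theta)=\Ebb{\pi^N(\theta)\mid \cal{G}}$ for an appropriate sub-$\sigma$-algebra $\cal{G}$ (for instance the $\sigma$-algebra generated by the unordered component samples). Conditional Jensen then gives $\Ebb{h(\pi^N_\times(\theta))}=\Ebb{h(\Ebb{\pi^N(\theta)\mid\cal{G}})}\le\Ebb{\Ebb{h(\pi^N(\theta))\mid\cal{G}}}=\Ebb{h(\pi^N(\theta))}$ for every convex $h$, which is exactly the required convex ordering; this is the standard way U-statistic-type averages dominate i.i.d.\ averages in convex order. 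Verifying that the product-form average really is such a conditional expectation of the plain average—rather than merely sharing its mean—is the delicate point, and I would devote the bulk of the argument to constructing the coupling or projection that makes this identity hold. Once the convex ordering is in place for $\pi$-almost every $\theta$ (and likewise at $\tilde\theta$), the conclusion for all $N>0$ follows by invoking the monotonicity theorem of~\cite{Andrieu2016} verbatim.
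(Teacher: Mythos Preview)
Your proposal is correct and follows the same overall strategy as the paper: reduce the asymptotic-variance comparison to a per-$\theta$ convex ordering of the density estimates, then invoke Theorem~10 (and Remark~12) of \cite{Andrieu2016}.

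The only difference is in how the convex ordering $\Ebb{h(\pi^N_\times(\theta))}\le\Ebb{h(\pi^N(\theta))}$ is established. You propose the conditional-expectation route $\pi^N_\times(\theta)=\Ebb{\pi^N(\theta)\mid\cal{G}}$ with $\cal{G}$ generated by the unordered per-coordinate samples; this identity does hold and is not as delicate as you suggest (exchangeability of $(X_k^1,\dots,X_k^N)$ for each $k$ makes the conditional law uniform over $K$-tuples of permutations, and averaging $\pi^N(\theta)$ over these recovers $\pi^N_\times(\theta)$ exactly). The paper instead writes $\pi^N_\times(\theta)$ explicitly as the average of $N^{K-1}$ ``cyclically shifted'' copies $Y^N_{\bm m}=N^{-1}\sum_n w(\theta,X_1^n,X_2^{(n+m_1\bmod N)+1},\dots,X_K^{(n+m_{K-1}\bmod N)+1})$, observes each $Y^N_{\bm m}$ has the same law as $\pi^N(\theta)$, and applies ordinary Jensen. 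The two arguments are equivalent ways of packaging the same symmetry; the paper's is slightly more concrete and avoids any discussion of conditional distributions, while yours connects more transparently to the MVUE/Rao--Blackwell viewpoint of Section~\ref{sec:motivation}.
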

\begin{proof}See Appendix~\ref{app:pmh}.
\end{proof}
Given the argument used in the proof, the results of \cite{Andrieu2016} (Theorem~10 in particular) imply much more than the variance bound in the corollary's statement. For instance, if the target is not concentrated on points, then the spectral gap of $(\theta^{m,N}_\times)_{m=1}^\infty$ is bounded below by that of $(\theta^{m,N})_{m=1}^\infty$. 
We finish the section by returning to our running example:
\begin{example}[A simple hierarchical model, re-revisited]\label{ex:hier2} Here, we follow~\cite[Section~5.1]{Schmon2021}. Consider once again the model in Example~\ref{ex:hier} and suppose we are interested only in the posterior's $\theta$-marginal $\pi(d\theta)$. Choosing 
$$\cal{M}(\theta,dx):=\prod_{k=1}^K\cal{N}(dx_k;0,\theta),$$
$w$ factorizes,
$$w_{GIMH}(\theta,x)=\frac{\pi(\theta,x)}{\cal{M}(\theta,x)}=p(\theta)\prod_{k=1}^K\cal{N}(y_k;x_k,1);$$
resulting in a evaluation cost of $\cal{O}(KN)$ for~(\ref{eq:acceptest},\ref{eq:acceptestpf}) and, regardless of which density estimates we use, a total cost of $\cal{O}(KMN)$ where $M$ denotes the number of steps we run the chain for. We return to this in the following section.
\end{example}
\subsection{Numerical comparison}\label{sec:numex}
Here, we apply the estimators discussed throughout Sections~\ref{sec:is}--\ref{sec:pMH} to the simple hierarchical model  introduced in Example~\ref{ex:hier} and we examine their performance. To benchmark the latter, we choose the prior to be conditionally conjugate to the model's likelihood: $p(d\theta)$ is the 
Inv-Gamma$(\alpha/2,\alpha\beta/2)$ distribution, in which case
\begin{align*}
X_k|y_k,\theta\sim \cal{N}\left(\frac{y_k}{\theta^{-1}+1},\frac{1}{\theta^{-1}+1}\right)\enskip\forall{k\in[K]},\quad
 \theta| y,X\sim \textrm{Inv-Gamma}\left(\frac{\alpha+K}{2},\frac{\alpha\beta+\sum_{k=1}^KX_k^2}{2}\right);
\end{align*}
and we can alternatively approximate the posterior, $\pi(d\theta,dx)$ in~\eqref{eq:picid}, using a Gibbs' sampler. Note that the above expressions are unnecessary for the evaluation of the estimators in Sections~\ref{sec:is}--\ref{sec:pMH}. To compare with standard methodology that also does not requires such expressions, we also approximate the posterior using Random Walk Metropolis (RWM) with the proposal variance tuned so that the mean acceptance probability (approximately) equals $25\%$. To keep the comparison honest, we run these two chains for $N^2$ steps and set $M=N$ for the estimators in Sections~\ref{sec:ppf}~and~\ref{sec:pMH}; in which case all estimators incur a similar $\cal{O}(KN^2)$ cost. We further fix $K:=100$, $\alpha:=1$, $\beta:=1$, and $N:=100$ and generate artificial observations $y_1,\dots,y_{100}$  by running~\eqref{eq:hierarchical} with $\theta:=1$.

Figure~\ref{fig:KDEs}   shows approximations to the posteriors's $\theta$-marginal $\pi(d\theta)$ obtained using a Gibbs sampler, RWM, IS (Section~\ref{sec:is}), IS$^2$ (Section~\ref{sec:ppf}), GIMH (Section~\ref{sec:pMH}), and the last three's product-form variants (PFIS, PFIS$^2$, and PFGIMH, respectively). In the cases of Gibbs, RWM, GIMH, and PFGIMH, we used a $20\%$ burn-in period and approximated the marginal with the empirical distribution of the $\theta$-components of the states visited by the chain. For GIMH and PFGIMH we also used a random walk proposal with its variance tuned so that the mean acceptance probability hovered around $25\%$. For IS, PFIS, IS$^2$, and PFIS$^2$, we used the proposals specified in Examples~\ref{ex:hier}~and~\ref{ex:hier1} and computed the approximations using 
\begin{align*}&\pi^{N^2}_{IS}(d\theta):=\frac{\sum_{n=1}^{N^2}w_{IS}(\theta^n,X^n)\delta_{\theta^n}}{\sum_{n=1}^{N^2}w_{IS}(\theta^n,X^n)},\quad 
&\pi^{N}_{PFIS}(d\theta):=\frac{\sum_{n=1}^N\left(\sum_{\bm{n}\in[N]^K}w_{IS}(\theta^n,X^{\bm{n}})\right)\delta_{\theta^n}}{\sum_{n=1}^N\sum_{\bm{n}\in[N]^K}w_{IS}(\theta^n,X^{\bm{n}})},\\
&\pi^{N,N}_{IS^2}(d\theta):=\frac{\sum_{m=1}^N\left(\sum_{n=1}^Nw_{IS^2}(X^{m,n})\right)\delta_{\theta^m}}{\sum_{m=1}^N\sum_{n=1}^Nw_{IS^2}(X^{m,n})},\quad
 &\pi^{N,N}_{PFIS^2}(d\theta):=\frac{\sum_{m=1}^{N}\left(\sum_{\bm{n}\in[N]^K}w_{IS^2}(X^{m,\bm{n}})\right)\delta_{\theta^m}}{\sum_{m=1}^{N}\sum_{\bm{n}\in[N]^K}w_{IS^2}(X^{m,\bm{n}})}.
\end{align*}
(Note that for IS, we are using $N^2$ samples instead of $N$ so that its cost is also $\cal{O}(KN^2)$.)

Our first observation is that the approximations produced by IS, IS$^2$, and GIMH are very poor. The first two exhibit severe weight degeneracy (in either case, a single particle had over $50\%$ of the probability mass and three had over $90\%$), something unsurprising given the target's moderately high dimension of $101$\footnote{One may wonder whether in the case of IS, the degeneracy could instead be due to our use of the proposal~\eqref{eq:ISprop} rather than the more natural choice~\eqref{eq:naturalprop}. It is not: the average W$_1$ distance and KS statistic (see Figure~\ref{fig:KDEs}'s caption for definitions) across $100$ replicates of the $\pi(d\theta)$'s approximation obtained using~\eqref{eq:naturalprop} and IS (with $N^2$ samples) were $32.7\%$ and $82.3\%$, respectively. In other words, a modest improvement over IS with proposal~\eqref{eq:ISprop} (compare with Table~\ref{table2}), but not  one sufficient to break the degeneracy: $83$  approximations (out of $100$) had at least $50\%$ of their mass concentrated in $2$ particles (out of $10,000$) and all but $7$ had over $80\%$ of their mass concentrated in $10$ particles.}. The third possesses a large spurious peak close to zero (with over $70\%$ of the mass) caused by large numbers of rejections in that vicinity. Replacing the i.i.d.\ estimators embedded within these algorithms with their product-form counterparts removes both the weight degeneracy and the spurious peak; and PFIS, PFIS$^2$, and PFGIMH return much improved approximations.   The best approximation is the one returned by the Gibbs sampler: an expected outcome given that the sampler's use of the conditional distributions makes it the estimator most `tailored' or `well-adapted' to the target. However, these distributions are not available for most models (precluding application of these samplers to such models) and   even just taking the, usually obvious, independence structure into account can make a substantial difference: the quality of the approximations returned by PFIS and PFIS$^2$ exceeds the quality of that returned by the common, or even default, choice of RWM. Note that this is the case even though the proposal variance in RWM was tuned, while that in the other two was simply set to $1$ (a reasonable choice given that $\theta=1$ was used to generate the data, but likely not the optimal one). In fact, for this simple model, it is easy to sensibly incorporate observations into the PFIS and PFIS$^2$ proposals (e.g.\ use $p(d\theta)\prod_{k=1}^K\cal{N}(dx_k;y_k,1)$ for PFIS and $p(d\theta)\prod_{k=1}^K\cal{N}(dx_k; y_k\theta[1+\theta]^{-1},\theta[1+\theta]^{-1})$ for PFIS$^2$) and potentially improve their performance.

\begin{figure*}[h!]
    \begin{center}
    \includegraphics[width=1\textwidth]{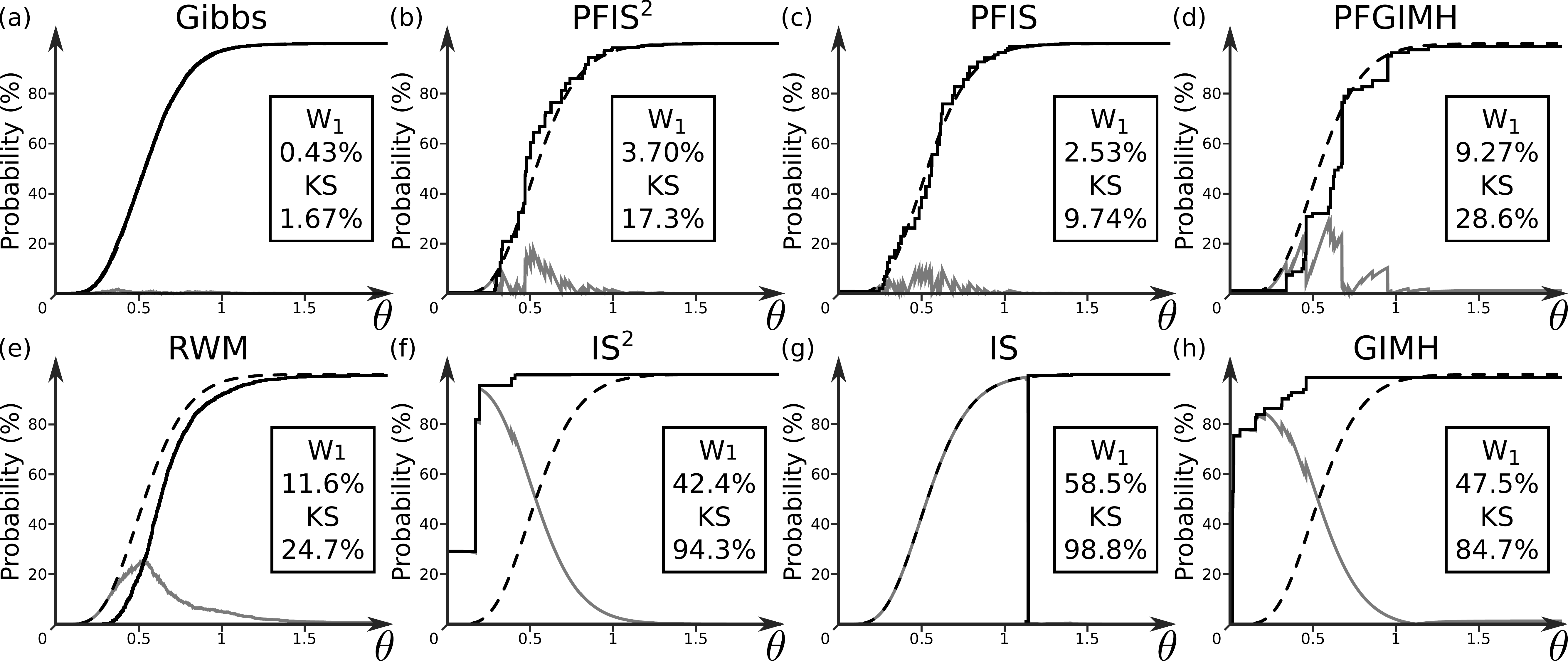}
    \vspace{-40pt}
    \end{center}
\caption{Empirical cumulative density functions for $\pi(d\theta)$ obtained using the eight approximations discussed in the text (black solid lines). As guides, we also plot a high-quality approximation $\pi_{REF}(d\theta)$ using~\eqref{eq:inverse-gamma} and quadrature, and the pointwise absolute difference between $\pi_{REF}$ and the eight approximations (grey lines). \textbf{Insets.} Wasserstein-1 distance (W$_1$) between $\pi_{REF}$ and the panel's approximation (area under the grey line, e.g.\ see \cite[p.64]{Shorack2009}) and corresponding Kolmogorov--Smirnov statistic (KS, maximum of the grey line).}\label{fig:KDEs}\vspace{-10pt}
\end{figure*}

To benchmark the approaches more thoroughly, we generated $R:=100$ replicates of the eight full posterior approximations and computed various error metrics (Tables~\ref{table2}~and~\ref{table22}). For the $\theta$-component, we used the high-quality reference approximation $\pi_{REF}$ described in Figure~\ref{fig:KDEs}'s caption to obtain the average (across repeats) W$_1$ distance and KS statistic (as described in the caption), and the average absolute error of the posterior mean and standard deviation estimates normalized by the true mean or standard deviation (i.e.\ $M_\theta^{-1}R^{-1}\sum_{r=1}^{R}\mmag{M^r_\theta-M_\theta}$ for the posterior mean estimates, where $M_\theta$ denotes the true mean  and $M^r_\theta$ the $r^{th}$ estimate thereof, and similarly for the standard deviation estimates). For the $x$-components, we instead used high-accuracy estimates for the component-wise means and standard deviations (obtained by running a Gibbs sampler  for $N^4=10^8$ steps) to compute the corresponding total absolute errors across replicates and components ($\sum_{k=1}^K\sum_{r=1}^{R}\mmag{M^{r}_{k}-M_{k}}$, where $M_k$ denotes the true mean  for the $k^{th}$ $x$-component and $M^r_k$ the $r^{th}$ estimate thereof, and similarly for the standard deviation estimates).

\begin{table}[!ht]
\begin{center}
\begin{tabular}{c|cccccccc}
&Gibbs&PFIS$^2$&{PFIS}&PFGIMH&{RWM}&IS$^2$&{IS}&GIMH\\\hline
W$_1$&$0.73\%$&$4.41\%$&$5.33\%$&$7.95\%$&$9.99\%$&$32.7\%$&$36.0\%$&$39.9\%$\\
KS&$1.93\%$&$17.9\%$&$19.2\%$&$24.9\%$&$23.3\%$&$82.3\%$&$79.9\%$&$71.8\%$\\
Mean error&$1.08\%$&$4.51\%$&$7.84\%$&$8.00\%$&$16.2\%$&$53.6\%$&$60.4\%$&$70.6\%$\\
Standard deviation error&$1.51\%$&$8.07\%$&$9.01\%$&$17.1\%$&$21.0\%$&$64.4\%$&$64.0\%$&$26.4\%$\\
\end{tabular}
\caption{Average-across repeats W$_1$ error and KS statistic for the approximations of $\pi(d\theta)$, and  average absolute errors for the corresponding mean and standard deviation estimates, obtained using each of the eight methods.}\vspace{-20pt}
\label{table2}
\end{center}
\end{table}

\begin{table}[!ht]
\begin{center}
\begin{tabular}{c|cccccc}
&Gibbs&PFIS$^2$&{PFIS}&{RWM}&IS$^2$&{IS}\\\hline
Mean&$56$&$169$&$511$&$1209$&$3295$&$5727$\\
Standard deviation&$40$&$124$&$332$&$663$&$2874$&$3340$\\
\end{tabular}
\caption{Total absolute error for the mean and standard deviation estimates of $\pi(dx)$'s univariate marginals. Note that no results are given for GIMH and PFGIMH since these algorithms directly target the $\theta$-marginal $\pi(d\theta)$.}\vspace{-20pt}
\label{table22}
\end{center}
\end{table}
Once again, the product-form estimators far outperformed their i.i.d.\ counterparts. Moreover, they perform just as well or better than RWM. PFIS$^2$'s estimates are  particularly accurate: a fact that does not surprise us given that its proposal has the same partially-factorized structure as the target, in this sense making it the best adjusted estimator to the problem. That is, best except for the Gibbs sampler that exploits the conditional distributions (encoding more information than this structure).  
We conclude with an interesting detail: PFIS$^2$ and PFIS perform similarly when approximating  the $\theta$-marginal (cf.\ Table~\ref{table2}), but PFIS$^2$ outperforms PFIS when approximating the latent variable marginals (cf.\ Table~\ref{table22}). This is perhaps not too surprising because, in the case of the $\theta$-marginal approximation, both PFIS$^2$ and PFIS  employ the same number $N$ of $\theta$-samples, while, in that of $k^{th}$ latent variable,  PFIS$^2$ uses $N^2$ $x_k$-samples but PFIS uses only $N$ such samples.
\section{Discussion}\label{sec:discussion}
The main message of this paper is that, when using Monte Carlo estimators to tackle problems possessing some sort of product structure, one should endeavour to exploit this structure and improve the estimators' performance. The resulting product-form estimators are not a panacea for the curse of dimensionality in Monte Carlo, but they are a useful  and sometimes overlooked tool in the practitioner's arsenal and make certain problems solvable when they otherwise would not be. More specifically,  whenever the target, or proposal, we are drawing samples from is product-form, these estimators achieve a smaller variance than their conventional counterparts. In our experience (e.g.\ Examples~\ref{ex:K}~and~\ref{ex:taylor}), the gap in variance grows at least exponentially with dimension whenever the integrand does not decompose into a sum of low-dimensional functions like in the trivial case~\eqref{eq:lintest}. For the reasons given  in Section~\ref{sec:statseff}, we expect the variance reduction to be further accentuated by targets that are `spread out' rather than highly peaked.

The gains in statistical efficiency come at a computational price: in the absence of exploitable structure in the test function, product-form estimators incur an $\cal{O}(N^K)$ cost limiting their applicability to $K<10$, while conventional estimators only carry an $\cal{O}(N)$ cost (although in practice the cost of obtaining reasonable estimates using the latter often scales poorly with $K$, with the effect hidden in the proportionality constant, e.g.\ Examples~\ref{ex:K}~and~\ref{ex:taylor}). 
Hence, for general test functions, product-form estimators are of most use when the variance reduction is particular pronounced or when samples are expensive to acquire (both estimators require drawing the same number $N$ of samples) or store (as, for example, when one employs physical random numbers and requires reproducibility \cite{owen2009}). In the latter case, product-form estimators enable us to extract the most possible from the samples we have gathered so far: by permuting the samples' components, the estimators artificially generate further samples. Of course, the more permutations we make, the more correlated our sample ensemble becomes and we get a diminishing returns effect that results in an $\cal{O}(N^{-1/2})$ rate of convergence instead of the $\cal{O}(N^{-K/2})$ rate we would achieve using $N^K$ independent samples. There is a middle ground here that remains unexplored: using $N<M<N^K$ permutations instead of all $N^K$ possible, so lowering the cost to $\cal{O}(M)$ at the expense of some of the variance reduction (see~\cite{LinZCC:2005,lindsten2017divide} for similar ideas in the Monte Carlo literature).   In particular, by choosing the $M$ permutations so that the correlations among them are minimized (e.g.\ the $M$ permutations with least overlap among their components), it might be possible to substantially reduce the cost without sacrificing too much of the variance reduction. 
Indeed, by setting the number $M$ of permutations to be such that $M$ evaluations of the test function incurs a cost comparable to that of generating the $N$ unpermuted tuples, one can ensure that the overall cost of the resulting estimator never greatly exceeds that of the conventional estimator. 
This type of approach has been studied in the sparse grid literature~\cite{gerstner1998numerical},
and is closely related to the theory of incomplete U-statistics~\cite[Chap.\ 4.3]{Lee1990}, an area in which there are ongoing efforts directed at designing good reduced-cost estimators (e.g.\ \cite{kong2020design}).

There are, however, settings in which product-form estimators should be applied without hesitation: if the integrand is a sum of products (SOP) of univariate functions, the cost comes down to $\cal{O}(N)$ without affecting the variance reduction (Section~\ref{sec:lincost}).
For instance, when estimating ELBO gradients to optimize mean-field approximations~\cite{Ranganath2014} of posteriors $e^{v}$ with SOP potentials $v$. More generally, if the test function is a sum of partially-factorized functions, the estimators' evaluation costs can often be substantially reduced (see also Section~\ref{sec:lincost}) so that the variance reduction far outweighs the more mild increases in cost. For instance, as we saw with the applications of importance sampling and its product-form variant in Section~\ref{sec:numex}.

For integrands lacking this sort of structure, and   at the expense of introducing some bias,  these types of cost reductions can sometimes be retained if one is able to find a good SOP approximation to the integrand (Example~\ref{ex:taylor}). How to construct these approximations for generic functions (or for function classes of interest in given applications) is an open question upon whose resolution the success of this type of approach hinges.  In reality, combining product-form estimators with SOP approximations amounts to nothing more than an approximate dimensionality reduction technique: we approximate a high-dimensional integral with a linear combination of products of low-dimension integrals, estimate each of the latter separately, and plug the estimates back into the linear combination to obtain an estimate of the original integral. It is certainly not without precedents: for instance, \cite{Rahman2004,Ma2009,Gershman2012,Braun2010} all propose, in rather different contexts, similar approximations except that the low-dimensional   integrals are computed using closed-form expressions or quadrature (for a very well-known example, see the delta method for moments~\cite{Oehlert1992}). In practice, the best option will likely involve a mix of these: use closed-form expressions where available, quadrature where possible, and Monte Carlo (or Quasi Monte Carlo) for everything else.

About the computational resources required to evaluate product-form estimators, and the allocation thereof, we ought to mention one interesting variant of the estimators that we omitted from the main text to keep the exposition simple. 
In particular, throughout we assumed that the same number of samples  are drawn from each marginal $\mu_1,\dots,\mu_K$ of the product-form target or proposal $\mu$. This need not be the case: straightforward extensions of our arguments show that the estimator
$$\mu^{N_1,\dots,N_K}_\times(\varphi):=\frac{1}{\prod_{k=1}^K N_k}\sum_{n_1=1}^{N_1}\dots\sum_{n_K=1}^{N_K}\varphi(X_1^{n_1},\dots,X_K^{n_K})$$
behaves much as~\eqref{eq:prodestK} does, even if a different number of samples $N_k$ are used per marginal $\mu_k$. This variant potentially allows us to concentrate our computational budget on `the most important dimensions', an idea that has found significant success in other areas of numerical integration (e.g.\ \cite{gerstner1998numerical,Gerstner2003,owen1998monte}). In our case, this could be done using the pertinent generalizations of the variance expressions in Theorem~\ref{thrm:classicalK}, which are identical  except that $N^{\mmag{A}}$ therein must be replaced by $\prod_{k\in A}N_k$ (these can be obtained by retracing the steps in the theorem's proof). In particular, one could  estimate the terms in these expressions and adjust the sample sizes so that the estimator variance is minimized, potentially in an iterative manner leading to  an adaptive scheme.

Combining product-form estimators with other Monte Carlo methodology expands their utility beyond product-form targets. We illustrated this in Section~\ref{sec:extend} by describing the three simplest and most readily accessible such combinations we could think of: their merger with  importance sampling  applicable to targets that are absolutely continuous with respect to product-form distributions (Section~\ref{sec:is}), that with  importance sampling squared applicable to targets that are absolutely continuous with respect to partially-factorized distributions (Section~\ref{sec:ppf}, see also~\cite{Tran2013}), and that with  pseudo-marginal MCMC applicable to targets with intractable densities (Section~\ref{sec:pMH}, see also~\cite{Schmon2021}). In all of these cases, we demonstrated theoretically that the resulting estimators are more statistically efficient than their standard counterparts (Corollaries~\ref{cor:is}--\ref{cor:pmh}). Many other extensions are possible. For instance, one can embed product-form estimators within random weight particle filters~\cite{rousset2006,fearnhead2008,fearnhead2010random}---and, more generally, algorithms reliant on unbiased estimation---much the same way we did for IS$^2$ and GIMH in Sections~\ref{sec:ppf}--\ref{sec:pMH}. For an example of a slightly different vein,  see Appendix~\ref{sec:mop} where we consider `mixture-of-product-form' estimators applicable to targets which are mixtures of product-form distributions and, by combining these with importance sampling, we obtain a product-form version of (stratified) mixture importance sampling estimators~\cite{Oh1993,Hesterberg1995} that is particularly appropriate for multi-modal targets. For further examples, see the divide-and-conquer SMC algorithm~\cite{lindsten2017divide,Kuntz2021} obtained by combining product-form estimators with SMC and Tensor Monte Carlo~\cite{Aitchison2019} obtained by merging the estimators with variational autoencoders.

When choosing among the resulting (and at times bewildering) constellation of estimators, we recommend following one simple principle:  pick estimators that somehow `resemble' or `mirror' the target. Good examples of this are well-parametrized Gibbs samplers which generate new samples using the target's exact conditional distributions and, consequently, often outperform other Monte Carlo algorithms (e.g.\ Section~\ref{sec:numex}). While for many targets these conditional distributions cannot be obtained (nor are good parametrizations known), their (conditional) independence structure is usually obvious (e.g.\ see \cite{Gelman2006,Gelman2006a,Koller2009,hoffman2013stochastic,Blei2003} and the many references therein) and can be mirrored using product-form estimators within one's methodology of choice. 
Indeed, in the case of the simple hierarchical model (Example~\ref{ex:hier}), it was the PFIS$^2$ estimator utilizing samples with exactly the same independence structure as the model's that performed best (besides the Gibbs sampler). Of course, this model's independence structure was particularly simple, and so were the resulting estimators. However, we believe that broadly the same considerations apply to  models with more complex structures and that product-form estimators can be adapted to such structures by following analogous steps.

To summarize, we believe that product-form estimators are of greatest use not on their own, but  embedded within more complicated Monte Carlo routines to tackle the aspects of the problem exhibiting product structure. There remains much work to be done in this direction.

\bibliographystyle{plainurl} 
\bibliography{../prod_biblio}       
\appendix

\section{Proof of Theorem~\ref{thrm:classicalK}}\label{app:classicalproof}
We begin with the proof of Lemma~\ref{lem:iidl2}:
\begin{proof}[Proof of Lemma~\ref{lem:iidl2}] We start with a simple identity: because $\sum_{i=0}^j(-1)^i\binom{j}{i}$ vanishes unless $j=0$,
\begin{align}
\sum_{B\subseteq A}(-1)^{\mmag{B}}&=\sum_{i=0}^{\mmag{A}}\sum_{B\subseteq A:\mmag{B}=i}(-1)^{\mmag{B}}=\sum_{i=0}^{\mmag{A}}(-1)^i\sum_{B\subseteq A:\mmag{B}=i}1=\sum_{i=0}^{\mmag{A}}(-1)^i\binom{\mmag{A}}{i}=1_{\{\mmag{A}=0\}}\nonumber\\
&=1_{\{A=\emptyset\}}\quad\forall B\subseteq A\subseteq [K].\label{eq:identity}
\end{align}

Next, we show that, for any non-empty $A\subseteq[K]$ and $\psi$ belonging to $L^2_{\mu_A}$, $\psi_A$ in~\eqref{eq:proderr} satisfies
\begin{equation}\label{eq:special}
\mu_B(\psi_A)=0\quad\forall \emptyset\neq B\subseteq A.
\end{equation}
Because $\mu_B(\psi_A)=\mu_{B\backslash\{k\}}(\mu_k(\psi_A))$ for any $k$ in a non-empty subset $B$ of $A$, we need only show that $\mu_k(\psi_A)=0$ for any $k$ in $A$. Fix any such $k$ and note that
\begin{align*}(-1)^{\mmag{A}}\psi_A=&\sum_{B\subseteq A\backslash\{k\}}(-1)^{-\mmag{B}}\mu_{A\backslash B}(\psi)+\sum_{B\subseteq A\backslash \{k\}}(-1)^{-\mmag{B\cup\{k\}}}\mu_{A\backslash (B\cup\{k\})}(\psi)\\
=&\sum_{B\subseteq A\backslash\{k\}}(-1)^{-\mmag{B}}\mu_{A\backslash B}(\psi)-\sum_{B\subseteq A\backslash \{k\}}(-1)^{-\mmag{B}}\mu_{A\backslash (B\cup\{k\})}(\psi).\end{align*}
We obtain  $\mu_k(\psi_A)=0$ by integrating both sides with respect to $\mu_k$, and~\eqref{eq:special} follows. 

Returning to~\eqref{eq:proderr}, assume, without loss of generality, that $A=\{1,\dots,\mmag{A}\}$, and note that
\begin{align*}
\Ebb{\left[\left(\prod_{k\in A}[\mu^N_{k}-\mu_k]\right)(\psi)\right]^2}&=\Ebb{\mu_A^N(\psi_A)^2}=\Ebb{\left(\frac{1}{N^{\mmag{A}}}\sum_{\bm{i}\in [N]^{\mmag{A}}}\psi_A(X^{\bm{i}})\right)^2}\\
&=\frac{1}{N^{2\mmag{A}}}\sum_{\bm{i}\in [N]^{\mmag{A}}}\sum_{\bm{j}\in [N]^{\mmag{A}}}\Ebb{\psi_A(X^{\bm{i}})\psi_A(X^{\bm{j}})},
\end{align*}
where $X^{\bm{i}}:=(X^{i_k}_k)_{k=1}^{\mmag{A}}$ and similarly for $X^{\bm{j}}$. 
Suppose that $i_k\neq j_k$ for some $k$ and let $\cal{F}^{i_k,j_k}$ denote the sigma algebra generated by $(X^{i_{l}}_{l},X^{j_{l}}_{l})_{l\neq k}$. Because the components are independent and both $X^{i_k}_k$ and $X^{j_k}_k$ have law $\mu_k$, 
\begin{align*}
\Ebb{\psi_A(X^{\bm{i}})\psi_A(X^{\bm{j}})}&=\Ebb{\Ebb{\psi_A(X^{\bm{i}})\psi_A(X^{\bm{j}})\middle|\cal{F}^{i_k,j_k}}}=\Ebb{\Ebb{\psi_A(X^{\bm{i}})\middle|\cal{F}^{i_k,j_k}}\Ebb{\psi_A(X^{\bm{j}})\middle|\cal{F}^{i_k,j_k}}}\\
&=\Ebb{\mu_k(\psi_A)\mu_k(\psi_A)}=0,
\end{align*}
where the last equality follows from~\eqref{eq:special}. Hence,
\begin{align}
\Ebb{\left[\left(\prod_{k\in A}[\mu^N_{k}-\mu_k]\right)(\psi)\right]^2}&=\frac{1}{N^{2\mmag{A}}}\sum_{\bm{i}\in[N]^{\mmag{A}}}\Ebb{\psi_A(X^{\bm{i}})^2}=\frac{1}{N^{2\mmag{A}}}\sum_{\bm{i}\in[N]^{\mmag{A}}}\mu_A(\psi_A^2)=\frac{\mu_A(\psi_A^2)}{N^{\mmag{A}}}.\label{eq:upsa2}
\end{align}
Because $A$ is non-empty by assumption and $(-1)^{\mmag{B}}=(-1)^{-\mmag{B}}$ for all $B\subseteq A$,~\eqref{eq:identity} implies that 
\begin{align*}
    \mu_A(\psi_A^2) = & \mu_A\left(\left[\psi_A-\mu_A(\psi)(-1)^{\mmag{A}}\sum_{B\subseteq A}(-1)^{-\mmag{B}}\right]^2\right)= \mu_A\left(\left[\sum_{B\subseteq A}(-1)^{\mmag{A}-\mmag{B}}[\mu_{A\backslash B}(\psi)-\mu_A(\psi)]\right]^2\right)\\
    =&\sum_{B\subseteq A}\sum_{B'\subseteq A}(-1)^{\mmag{B}+\mmag{B'}}\mu_A([\mu_{A\backslash B}(\psi)-\mu_A(\psi)][\mu_{A\backslash B'}(\psi)-\mu_A(\psi)]).
\end{align*} 
But,
\begin{align*}
&\mu_A([\mu_{A\backslash B}(\psi)-\mu_A(\psi)][\mu_{A\backslash B'}(\psi)-\mu_A(\psi)])\\
&\qquad\qquad\qquad\qquad\qquad\qquad=\mu_{B\cap B'}(\mu_{B\backslash B'}([\mu_{A\backslash B}(\psi)-\mu_A(\psi)])\mu_{B'\backslash B}([\mu_{A\backslash B'}(\psi)-\mu_A(\psi)]))\\%
&\qquad\qquad\qquad\qquad\qquad\qquad=\mu_{B\cap B'}([\mu_{A\backslash (B\cap B')}(\psi)-\mu_A(\psi)]^2),
\end{align*}
whence,
\begin{align}\mu_A(\psi_A^2)=&\sum_{B\subseteq A}\sum_{B'\subseteq A}(-1)^{\mmag{B}+\mmag{B}'}\mu_{B\cap B'}([\mu_{A\backslash (B\cap B')}(\psi)-\mu_A(\psi)]^2)\nonumber\\
=&\sum_{C\subseteq A}\mu_{C}( [\mu_{A\backslash C}(\varphi)-\mu_A(\varphi)]^2)\left(\sum_{B,B'\subseteq A:B\cap B'=C}(-1)^{\mmag{B}+\mmag{B'}}\right)\label{eq:dmwua9dnwaud}.\end{align}
However, our starting identity implies that
\begin{align*}
\sum_{B,B'\subseteq A:B\cap B'=C}(-1)^{\mmag{B}+\mmag{B'}}&=\sum_{D\subseteq{A\backslash C}}\sum_{D'\in \subseteq{A\backslash (C\cup D)}}(-1)^{\mmag{C\cup D}+\mmag{C\cup D'}}=\sum_{D\subseteq {A\backslash C}}\sum_{D'\subseteq{A\backslash (C\cup D)}}(-1)^{\mmag{D}+\mmag{D'}}\\
&=\sum_{D\subseteq {A\backslash C}}(-1)^{\mmag{D}}\left(\sum_{D'\subseteq{A\backslash (C\cup D)}}(-1)^{\mmag{D'}}\right)=\sum_{D\subseteq {A\backslash C}}(-1)^{\mmag{D}}1_{\{A\backslash (C\cup D)=\emptyset\}}\\
&=(-1)^{\mmag{A\backslash C}}=(-1)^{\mmag{A}-\mmag{C}},
\end{align*}
and the lemma follows from~\eqref{eq:dmwua9dnwaud}.
\end{proof}
We are now ready to tackle the proof of Theorem~\ref{thrm:classicalK}. We do it in steps:
\begin{proof}[Step 1: unbiasedness]\let\qed\relax Because $X^{\bm{n}}\sim \mu$ for all $\bm{n}$ in $[N]^K$, this follows directly from the estimator's definition in~\eqref{eq:prodestK}.\end{proof}
\begin{proof}[Step 2: square integrability of $\mu_{A^c}(\varphi)$]\let\qed\relax That, for any subset $A$ of $[K]$, $\mu_{A^c}(\varphi)$ is square $\mu_A$-integrable follows from our assumption that $\varphi$ in square $\mu$-integrable and Jensen's inequality:
$$\mu_A(\mu_{A^c}(\varphi)^2)\leq \mu_A(\mu_{A^c}(\varphi^2))=\mu(\varphi^2)<\infty\quad\forall A\subseteq [K].$$
\end{proof}
\begin{proof}[Step 3: variance expressions]\let\qed\relax By~(\ref{eq:decomp},\ref{eq:proderr}),
\begin{align}
\text{Var}(\mu^N_\times(\varphi))&=\Ebb{[\mu^N_\times(\varphi)-\mu(\varphi)]^2}=\Ebb{\left[\sum_{\emptyset\neq A\subseteq[K]}\left(\prod_{k\in A}(\mu^N_k-\mu_k)\right)(\mu_{A^c}(\varphi))\right]^2}\nonumber\\
&=\Ebb{\left[\sum_{\emptyset\neq A\subseteq[K]}\mu^N_A(\psi_A)\right]^2}
=\sum_{\emptyset\neq A\subseteq[K]}\sum_{\emptyset\neq A'\subseteq[K]}\Ebb{\mu^N_A(\psi_A)\mu^N_{A'}(\psi_{A'})},\label{eq:fmewa9fhwfbwaf}
\end{align}
where $\psi_A$ is as in~\eqref{eq:proderr} with $\psi:=\mu_{A^c}(\varphi)$. Fix any non-empty subsets $A,A'$ of $[K]$ such that $A\backslash A'$ is non-empty, set $\cal{F}_{A'}$ to be the sigma algebra generated by all $(X_{l}^n)_{n=1}^N$ with $l$ in $A'$, and note that
\begin{align*}
\Ebb{\mu^N_A(\psi_A)\mu^N_{A'}(\psi_{A'})}&=\Ebb{\Ebb{\mu^N_A(\psi_A)\mu^N_{A'}(\psi_{A'})|\cal{F}_{A'}}}=\Ebb{\Ebb{\mu^N_A(\psi_A)|\cal{F}_{A'}}\mu^N_{A'}(\psi_{A'})}.
\end{align*}
But, once again assuming without loss of generality that $A=\{1,\dots,\mmag{A}\}$ and applying~\eqref{eq:special},
\begin{align*}
\Ebb{\mu^N_A(\psi_A)|\cal{F}_{A'}}&=\frac{1}{N^{\mmag{A}}}\sum_{\bm{i}\in[N]^{\mmag{A}}}\Ebb{\psi_A(X^{\bm{i}})|\cal{F}_{A'}}=\frac{1}{N^{\mmag{A}}}\sum_{\bm{i}\in[N]^{\mmag{A}}}\mu_{A\backslash A'}(\psi_A)((X_{l}^{i_l})_{l\in A'}))=0
\end{align*}
almost surely, and we find that $\Ebb{\mu^N_A(\psi_A)\mu^N_{A'}(\psi_{A'})}=0$. Because an analogous argument shows that this is the case if $A'\backslash A$ is non-empty,~\eqref{eq:fmewa9fhwfbwaf} reduces to
\begin{align*}\text{Var}(\mu^N_\times(\varphi))&=\sum_{\emptyset\neq A\subseteq[K]}\Ebb{\mu^N_A(\psi_A)^2}=\sum_{\emptyset\neq A\subseteq[K]}\Ebb{\left[\left(\prod_{k\in A}[\mu^N_{k}-\mu_k]\right)(\mu_{A^c}(\varphi))\right]^2},\end{align*}
and the variance expressions in~\eqref{eq:vark} follow from Lemma~\ref{lem:iidl2}.
\end{proof}

\begin{proof}[Step 4: consistency and asymptotic normality]\let\qed\relax From~\eqref{eq:decomp}, it follows that, for all $N>0$,
\begin{equation}\label{eq:decomp3}\mu^N_\times(\varphi)-\mu(\varphi)=\sum_{k=1}^K[\mu^N_k(\mu_{\{k\}^c}(\varphi))-\mu(\varphi)]+R_N\quad \forall N>0,\end{equation}
where $R_N:=\sum_{A\subseteq [K]:\mmag{A}>1}\left(\prod_{k\in A}[\mu^N_{k}-\mu_k]\right)(\mu_{A^c}(\varphi))$. Given the  square integrability in Step 2 and that $\mu(\varphi)=\mu_k(\mu_{\{k\}^c}(\varphi))$,   the classical law of large numbers and central limit theorem   imply that
\begin{align*}
&\lim_{N\to\infty}\mu^N_k(\mu_{\{k\}^c}(\varphi))-\mu(\varphi)=0\enskip\text{almost surely},\\
& N^{1/2}[\mu^N_k(\mu_{\{k\}^c}(\varphi))-\mu(\varphi)]\Rightarrow\cal{N}(0,\sigma_{k}^2(\varphi))\enskip\text{as}\enskip N\to\infty,
\end{align*}
for all $k$ in $[K]$. Because $(X_1^n)_{n=1}^\infty$, $\dots$, $(X_K^n)_{n=1}^\infty$ are independent sequences, so are 
$$(\mu_1^N(\mu_{\{1\}^c}(\varphi))-\mu(\varphi))_{N=1}^\infty,\, \dots,\, (\mu_K^N(\mu_{\{K\}^c}(\varphi))-\mu(\varphi))_{N=1}^\infty,$$
and the continuous mapping theorem yields
\begin{align*}&\sum_{k=1}^K[\mu_k^N(\mu_{\{k\}^c}(\varphi))-\mu(\varphi)]\to 0\enskip\text{almost surely},\\
& \sum_{k=1}^K N^{1/2}[\mu_k^N(\mu_{\{k\}^c}(\varphi))-\mu(\varphi)]\Rightarrow \cal{N}(0,\sigma_{\times}^2(\varphi)),\end{align*}
as $N\to\infty$. Hence, if we can show that the remainder term $R_N$ tends to zero fast enough, i.e.\
\begin{align}
R_N\to0\enskip\text{almost surely},\quad N^{1/2}R_N\Rightarrow0,\quad\text{as }N\to\infty,\label{eq:fast2}
\end{align}
then (\ref{eq:lln},\ref{eq:clt}) follow from~\eqref{eq:decomp3} and Slutsky's theorem. To do so, note that the same argument as in Step 3 shows that
\begin{align*}
\Ebb{R_N^2}&=\Ebb{\left[\sum_{ A\subseteq[K]:\mmag{A}>1}\mu^N_A(\psi_A)\right]^2}=\sum_{A\subseteq [K]:\mmag{A}>1}\frac{1}{N^{\mmag{A}}}\sum_{B\subseteq A}(-1)^{\mmag{A}-\mmag{B}}\sigma_{A,B}^2(\mu_{A^c}(\varphi))\quad\forall N>0.
\end{align*}
Markov's inequality then implies that
\begin{align*}&\Pbb{\{|R_N|\geq \varepsilon\}}\leq \frac{\Ebb{R_N^2}}{\varepsilon^2}=\sum_{A\subseteq [K]:\mmag{A}>1}\frac{1}{N^{\mmag{A}}\varepsilon^2}\sum_{B\subseteq A}(-1)^{\mmag{A}-\mmag{B}}\sigma_{A,B}^2(\mu_{A^c}(\varphi))\quad\forall \varepsilon>0,\enskip N>0.\end{align*}
Summing over $N$ and applying a Borel-Cantelli argument, we obtain the first limit in \eqref{eq:fast2}. For the second, set $\varepsilon:=N^{-1/2}\delta$, for any $\delta>0$, and take the limit $N\to\infty$.
\end{proof}
\section{Proofs of Theorem~\ref{thrm:mvu} and its corollary}\label{app:mvu}
Throughout this appendix we use colons to indicate ranges, with $p:q = p,p+1,\ldots,q$ for integers $p \leq q$, and use ranges in indices to compactly describe tuples. For instance, $x^{1:N}$ or $x_{1:K}^{1:N}$ refer to a tuple in $S^N$, $x_k^{1:N}$ to one in $S_k^N$, $x_{1:K}^n$ to one in $S$, etc.
The key to Theorem's~\ref{thrm:mvu} proof is the following.
\begin{lemma}\label{lem:sym}
Suppose that $f$ is a measurable real-valued  function on $(S^N,\cal{S}^N)$ satisfying
$$\left(\prod_{n=1}^N \mu\right)(f)=0$$
for all product-form $\mu$ on $(S,\cal{S})=(S_1\times\dots \times S_K,\cal{S}_1\times\dots\times\cal{S}_K)$ satisfying $\mu(\mmag{\varphi})<\infty$ for some given measurable $\varphi:S\to\r$. It is symmetric---meaning that $f$ remains unchanged by permutations of its arguments: 
\begin{align}&f(x_1^{\tau_1(1)},\dots,x_1^{\tau_1(N)},\dots,x_K^{\tau_K(1)},\dots,x_K^{\tau_K(N)})=f(x_1^{1},\dots,x_1^{N},\dots,x_K^{1},\dots,x_K^{N})\label{eq:fneya8fney8a},\end{align}
\sloppy for all $x_1^{1:N}$ in $S_1^N$, $\dots$, $x_K^{1:N}$  in $S_K^N$, and permutations $\tau_1,\dots,\tau_K$  of $[N]$---if and only if $f$ is zero everywhere. 
\end{lemma}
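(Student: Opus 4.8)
The \emph{if} direction is immediate, so the plan is to establish the nontrivial direction: a symmetric $f$ with vanishing product-measure integrals is identically zero. The first step is to record what symmetry buys us. Invariance under independent permutations $\tau_1,\dots,\tau_K$ of the $N$ arguments within each of the $K$ coordinates means that $f$ depends on its argument $(x_k^n)_{k\in[K],n\in[N]}$ only through the $K$ unordered multisets $\{x_k^1,\dots,x_k^N\}$, one per coordinate $k$. I will write $g$ for the induced function of these multisets, so that evaluating $f$ at a point is the same as evaluating $g$ at the corresponding collection of multisets.

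Next I would fix an arbitrary point $\xi=(\xi_k^n)_{k,n}\in S^N$ and aim to show $f(\xi)=0$. To this end I specialise the hypothesis to finitely supported product-form measures: for each $k$ let $a_k^1,\dots,a_k^{m_k}$ be the distinct values among $\xi_k^1,\dots,\xi_k^N$ and set $\mu_k:=\sum_{i=1}^{m_k}p_k^i\delta_{a_k^i}$ with weights $p_k^i\ge 0$ summing to one; the product $\mu:=\mu_1\times\dots\times\mu_K$ is product-form and, being finitely supported with $\varphi$ real-valued, trivially satisfies $\mu(|\varphi|)<\infty$. Since all $NK$ coordinates are independent under $\prod_{n=1}^N\mu$, expanding the hypothesis $\left(\prod_{n=1}^N\mu\right)(f)=0$ and grouping configurations by their per-coordinate count vectors $\mathbf c_k=(c_k^1,\dots,c_k^{m_k})$ (with $\sum_i c_k^i=N$) yields a polynomial identity in the weights,
\begin{equation*}
\sum_{\mathbf c_1,\dots,\mathbf c_K}\left[\prod_{k=1}^K\binom{N}{\mathbf c_k}\right]g(\mathbf c_1,\dots,\mathbf c_K)\prod_{k=1}^K\prod_{i=1}^{m_k}(p_k^i)^{c_k^i}=0,
\end{equation*}
where $\binom{N}{\mathbf c_k}:=N!/\prod_i c_k^i!$ and I have used symmetry to replace $f$ by its common value $g(\mathbf c_1,\dots,\mathbf c_K)$ over all configurations realising the given counts (these being $\prod_k\binom{N}{\mathbf c_k}$ in number).

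The crux is to conclude that every coefficient vanishes, and this is the step I expect to require the most care. The obstacle is that the identity holds only on the product of probability simplices $\{p_k^i\ge 0,\ \sum_i p_k^i=1\}$, not on all of weight space, so the monomials are not a priori linearly independent. I would circumvent this by observing that, for each fixed $k$, the displayed polynomial is homogeneous of degree $N$ in $(p_k^1,\dots,p_k^{m_k})$; rescaling an arbitrary strictly positive weight vector to the simplex and invoking this multi-homogeneity shows the polynomial vanishes on the whole open positive orthant, hence is identically zero and all its coefficients are zero. As the multinomial coefficients $\binom{N}{\mathbf c_k}$ are strictly positive, this forces $g(\mathbf c_1,\dots,\mathbf c_K)=0$ for every admissible family of count vectors. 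Taking each $\mathbf c_k$ to be the multiplicity vector of $\xi_k^1,\dots,\xi_k^N$ then gives $f(\xi)=g(\mathbf c_1,\dots,\mathbf c_K)=0$, and since $\xi$ was arbitrary we conclude $f\equiv 0$. Note that symmetry is precisely what collapses each monomial's coefficient from a sum of values of $f$ into a single positive multiple of one value of $f$, which is why the statement genuinely needs it.
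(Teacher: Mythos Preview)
Your argument is correct and follows essentially the same route as the paper's proof: specialise to finitely supported product measures, expand the vanishing integral as a polynomial in the weights, use block-wise homogeneity of degree $N$ to pass from the product of simplices to the open positive orthant, conclude the polynomial is identically zero, and read off the relevant coefficient using the symmetry of $f$. The only cosmetic differences are that the paper takes all $N$ points $x_k^{1:N}$ (possibly with repeats) as support rather than the distinct values, and extracts a single coefficient at the end rather than grouping by count vectors from the outset; neither affects the substance.
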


\begin{proof}The reverse direction is trivial. For the forward one, pick any integer $N$,  $x_{1}^{1:N}$ in $S_1^N,$ $\dots,$ $x_{K}^{1:N}$ in $S_K^N$, $K$ probability distributions $w_{1}^{1:N},\dots,w_{K}^{1:N}$ on $[N]$,  and let
$$\mu:=\prod_{k=1}^K \sum_{l_k^n=1}^N w_{k}^{l_k}\delta_{x_{k}^{l_k}}.$$
By the premise, 
\begin{align}
0=&\left(\prod_{n=1}^N \mu\right)(f)=\left(\prod_{n=1}^N\prod_{k=1}^K \sum_{l_k^n=1}^N w_{k}^{l_k^n}\delta_{x_{k}^{l_k^n}}\right)(f)=\left(\prod_{k=1}^K\prod_{n=1}^N \sum_{l_{k}^n=1}^N w_{k}^{l_k^n}\delta_{x_{k}^{l_k^n}}\right)(f)\nonumber\\
=&\left(\prod_{k=1}^K\sum_{l_k^{1:N}\in[N]^N}w_{k}^{l_k^1}\dots w_{k}^{l_k^N}\delta_{x_k^{l_k^1}}\times\dots\times \delta_{x_k^{l_k^N}}\right)(f)\nonumber\\
=&\sum_{l_{1:K}^{1:N}\in[N]^{KN}}w_{1}^{l_1^1}\dots w_{1}^{l_1^N}\dots w_{K}^{l_K^1}\dots w_{K}^{l_K^N} f(x^{l^1_1}_1,\dots,x^{l^N_1}_1,\dots,x^{l^1_K}_K,\dots,x^{l^N_K}_K)=:g(w_{1:K}^{1:N}).\label{eq:mge7r8hagyega}
\end{align}
The above holds for all $w_{1}^{1:N},\dots,w_{K}^{1:N}$ in the unit simplex and it follows that $g$ is zero on the positive orthant $\r^{NK}_{>0}$: for any  $z_{1:K}^{1:N}$ in $\r^{NK}_{>0}$, set 
$$w_k^{1:N}:=\frac{z_k^{1:N}}{s_k}\quad\text{where} \quad s_k:=\sum_{l=1}^Nz_k^l,\quad \forall k\in [K],$$
to find that
\begin{align*}
g(z_{1:K}^{1:N})&=g(s_1w_1^{1:N},\dots, s_Kw_K^{1:N})=(s_1\dots s_K)^Ng(w_{1:K}^{1:N})=0.
\end{align*}
Given that $g$ is a polynomial, it must be zero everywhere (to argue this, use induction and the fact that a non-zero univariate polynomial of degree $d$ has at most $d$ roots). In other words, all of $g$'s coefficients are zero. 

By definition, $g$ exclusively features monomials of the type $[w_1^1]^{d_1^1}\dots[w_1^N]^{d_1^N}\dots[w_K^1]^{d_K^1}\dots[w_K^N]^{d_K^N}$ with exponents satisfying 
\begin{equation}\label{eq:fne87nfea78hfanwew}\sum_{l=1}^Nd_k^l=N\quad \forall k\in[K]\end{equation} For any such exponents, the monomial's coefficient $c$ is given by
$$c=\sum_{l^{1:N}_{1:K}}f(x^{l^1_1}_1,\dots,x^{l^N_1}_1,\dots,x^{l^1_K}_K,\dots,x^{l^N_K}_K),$$
where the sum is taken over all indices $l_1^1,\dots,l_1^N,\dots,l_K^1,\dots,l_K^N$ in $[N]$ satisfying
\begin{equation}\label{eq:fne87nfea78hfanwew2}
\mmag{\{n\in[N]:l_k^n=l\}}=d_k^l\quad\forall l\in[N],\enskip k\in[K].
\end{equation}
Because we obtain all such index vectors by picking any one of them, say $(l_1^1,\dots,l_1^N,\dots,l_K^1,\dots,l_K^N)$, and permuting its entries, we find that the coefficient $c$ equals
$$\sum_{\tau_1,\dots,\tau_K}f(x^{l^{\tau_1(1)}_1}_1,\dots,x^{l^{\tau_1(N)}_1}_1,\dots,x^{l^{\tau_K(1)}_K}_K,\dots,x^{l^{\tau_K(N)}_K}_K),$$
where the sum is taken over all permutations $\tau_1,\dots,\tau_K$ of $[N]$. Given that $c=0$, combining the above with \eqref{eq:fneya8fney8a} we find that 
$$f(x^{l^1_1}_1,\dots,x^{l^N_1}_1,\dots,x^{l^1_K}_K,\dots,x^{l^N_K}_K)=0.$$
Because~(\ref{eq:fne87nfea78hfanwew},\ref{eq:fne87nfea78hfanwew2}) hold if $l_k^n:=n$ for all $n$ in $[N]$ and $k$ in $[K]$, the above implies that
$$f(x^{1}_1,\dots,x^{N}_1,\dots,x^{1}_K,\dots,x^{N}_K)=0.$$
and the lemma follows because $x^{1:N}_{1:K}$ was arbitrary.
\end{proof}
Theorem~\ref{thrm:mvu} now follows easily:
\begin{proof}[Proof of  Theorem~\ref{thrm:mvu}]Let
\begin{align*}&g(x_{1:K}^{1:N}):=\frac{1}{(N!)^K}\sum_{\tau_1,\dots,\tau_K}f(x_1^{\tau_1(1)},\dots,x_1^{\tau_1(N)},\dots,x_K^{\tau_K(1)},\dots,x_K^{\tau_K(N)})\end{align*}
for all $x_1^{1:N}$ in $S_1^N$, $\dots$, $x_K^{1:N}$  in $S_K^N$, where the sum is over all permutations $\tau_1,\dots,\tau_K$  of $[N]$. Clearly, $g$ is symmetric (in the sense of Lemma~\ref{lem:sym}) and 
\begin{align*}\left(\prod_{n=1}^N \mu\right)(g)&=\frac{1}{(N!)^K}\sum_{\tau_1,\dots,\tau_K}\left(\prod_{n=1}^N \mu\right)(f)=\frac{1}{(N!)^K}\sum_{\tau_1,\dots,\tau_K}\mu(\varphi)=\frac{(N!)^K}{(N!)^K}\mu(\varphi)=\mu(\varphi)\end{align*}
for all product-form $\mu$ on $(S,\cal{S})$. Because, furthermore, $\mu^N_\times(\varphi)=h(X^{1:N}_{1:K})$ where  $
h(x^{1:N}_{1:K}):=N^{-K}\sum_{\bm{n}\in[N]^K}\varphi(x^{\bm{n}})$ 
is symmetric and satisfies $\left(\prod_{n=1}^N\mu\right)(h)=\Ebb{\mu^N_\times(\varphi)}=\mu(\varphi)$,  Lemma~\ref{lem:sym} implies that $g(X^{1:N}_{1:K})=\mu^N_\times(\varphi)$. The theorem then follows from the Cauchy-Schwarz inequality and the fact $X^{1:N}_{1:K}$'s law remains unchanged by component-wise swaps:
\begin{align*}
&(N!)^{2K}\text{Var}(\mu_\times^N(\varphi))=(N!)^{2K}\text{Var}(g(X^{1:N}_{1:K}))\\
&=\text{Var}\Bigg(\sum_{\tau_1,\dots,\tau_K}f(X_1^{\tau_1(1)},\dots,X_1^{\tau_1(N)}, \dots,X_K^{\tau_K(1)},\dots,X_K^{\tau_K(N)})\Bigg)\\
&=\sum_{\tau_1,\dots,\tau_K}\sum_{\tau_1',\dots,\tau_K'}\text{Cov}( f(X_1^{\tau_1(1)},\dots,X_1^{\tau_1(N)},\dots,X_K^{\tau_K(1)},\dots,X_K^{\tau_K(N)}),\\
&\qquad\qquad\qquad\qquad  f(X_1^{\tau_1'(1)},\dots,X_1^{\tau_1'(N)},\dots,X_K^{\tau_K'(1)},\dots,X_K^{\tau_K'(N)}))\\
&\leq\sum_{\tau_1,\dots,\tau_K}\sum_{\tau_1',\dots,\tau_K'}\text{Var}(f(X_1^{\tau_1(1)},\dots,X_1^{\tau_1(N)},\dots,X_K^{\tau_K(1)},\dots,X_K^{\tau_K(N)}))^{1/2}\\
&\qquad\qquad\quad \times\text{Var}(f(X_1^{\tau_1'(1)},\dots,X_1^{\tau_1'(N)},\dots,X_K^{\tau_K'(1)},\dots,X_K^{\tau_K'(N)}))^{1/2}\\
&\leq \sum_{\tau_1,\dots,\tau_K}\sum_{\tau_1',\dots,\tau_K'}\text{Var}(f(X^{1:N}_{1:K}))=(N!)^{2K}\text{Var}(f(X^{1:N}_{1:K})).
\end{align*}
\end{proof}
Corollary~\ref{cor:varbound} follows almost immediately from Theorem~\ref{thrm:mvu} and Jensen's inequality:
\begin{proof}[Proof of Corollary~\ref{cor:varbound}]
Given that $\mu^N(\varphi)$ is an unbiased estimator of $\mu(\varphi)$ for all distributions $\mu$ satisfying $\mu(\mmag{\varphi})<\infty$, the first bound is a direct consequence of Theorem~\ref{thrm:mvu}. For the asymptotic variance bound, note that
\begin{align*}
\sigma_{1}^2(\varphi)&=\mu_1(\mu_{\{1\}^c}(\varphi-\mu_1(\varphi))^2)=\mu_1(\mu_2(\psi-\mu_1(\psi))^2),\\
\sigma_{2}^2(\varphi)&=\mu_2(\mu_{\{2\}^c}(\varphi-\mu_2(\varphi))^2)=\mu_2(\mu_1(\psi-\mu_2(\psi))^2).
\end{align*}
where $\psi:=\mu_{[3:K]}(\varphi)$ and $[3:K]:=\{3,\dots,K\}$. Because Jensen's inequality implies that $$\mu_2(\mu_1(\psi-\mu_2(\psi))^2)\leq \mu_2(\mu_1([\psi-\mu_2(\psi)]^2)),$$it follows that
\begin{align}
\sigma_{1}^2(\varphi)+\sigma_{2}^2(\varphi)
&\leq \mu_1(\mu_2(\psi-\mu_1(\psi))^2)+\mu_2(\mu_1([\psi-\mu_2(\psi)]^2))\nonumber\\
&=\mu_1([\mu_2(\psi)-\mu_{[2]}(\psi)]^2)+\mu_1(\mu_2([\psi-\mu_2(\psi)]^2))\nonumber\\
&=\mu_1(\mu_2(\psi)^2)-\mu_{[2]}(\psi)^2+\mu_1(\mu_2(\psi^2)-\mu_2(\psi)^2)\nonumber\\
&=\mu_{[2]}(\psi^2)-\mu_{[2]}(\psi)^2=\mu_{[2]}([\psi-\mu_{[2]}(\psi)]^2).\nonumber
\end{align}
Setting now $\psi:=\mu_{[4:K]}(\varphi)$, we have that
\begin{align*}\sigma_{1}^2(\varphi)+\sigma_{2}^2(\varphi)\leq\mu_{[2]}(\mu_3(\psi-\mu_{[2]}(\psi))^2),\quad \sigma_{3}^2(\varphi)=\mu_{3}(\mu_{[2]}(\psi-\mu_3(\psi))^2).\end{align*}
Hence, applying Jensen's inequality again, we obtain
$$\sigma_{1}^2(\varphi)+\sigma_{2}^2(\varphi)+\sigma_{3}^2(\varphi)\leq \mu_{[3]}(\mu_{[4:K]}(\varphi-\mu_{[3]}(\varphi))^2).$$
Iterating this argument gives us the desired bound:
\begin{align*}
\sigma^2_\times(\varphi)=\sum_{k=1}^K\sigma_{k}^2(\varphi)\leq& \mu_{[K]}(\mu_{\emptyset}(\varphi-\mu_{[K]}(\varphi))^2)=\mu([\varphi-\mu(\varphi)]^2)=\sigma^2(\varphi).
\end{align*}
\end{proof}
\section{Proof of Theorem~\ref{thrm:classicalK2} and its corollary}\label{app:classicalK2}
Theorem~\ref{thrm:classicalK2}'s proof is straightforward:
\begin{proof}[Proof of Theorem~\ref{thrm:classicalK2}]
By definition, $\mu^{M,N}_\times(\varphi):=M^{-1}\sum_{m=1}^MZ_m^N(\varphi)$, where $Z_m^N:=N^{-K}\sum_{\bm{n}\in[N]^K}\varphi(\theta^m,X^{m,\bm{n}})$, and the unbiasedness follows:
\begin{align*}
&\Ebb{Z_m^N|\theta^m}=\frac{1}{N^K}\sum_{\bm{n}\in[N]^K}\Ebb{\varphi(\theta^m,X^{m,\bm{n}})|\theta^m}=\frac{1}{N^K}\sum_{\bm{n}\in[N]^K}(\cal{M}\varphi)(\theta^m)=(\cal{M}\varphi)(\theta^m)\quad \text{a.s.},\\
&\Rightarrow\Ebb{Z_m^N}=\Ebb{(\cal{M}\varphi)(\theta^m)}=\mu_0(\cal{M}\varphi)=\mu(\varphi).
\end{align*}
Because $Z_m^N$ is a function of $(\theta^m,X^{m,1},\dots,X^{m,N})$ that does not depend on $m$ and the sequence $(\theta^m,X^{m,1},\dots,X^{m,N})_{m=1}^M$ is i.i.d. and drawn from $\mu_0\otimes\cal{M}^N$, $(Z_m^N)_{m=1}^M$ forms an i.i.d.\ sequence and the consistency follows from the law of large numbers:
\begin{align*}
\lim_{M\to\infty}\mu^{M,N}_\times(\varphi)=\lim_{M\to\infty}\frac{1}{M}\sum_{m=1}^MZ_m^N=\Ebb{Z_m^N}=\mu(\varphi)\quad\textrm{almost surely}.
\end{align*}
Similarly, because Jensen's inequality implies that
$$\Ebb{(Z_m^N)^2}\leq \frac{1}{N^K}\Ebb{\sum_{\bm{n}\in[N]^K}\varphi(\theta^m,X^{m,\bm{n}})^2}=\mu(\varphi^2),$$
the central limit theorem for sums of i.i.d.\ square-integrable random variables tells us that, if $\varphi$ belongs to $L^2_\mu$, then~\eqref{eq:ppfclt} holds with $\sigma^2_{\times,N}(\varphi)=\textrm{Var}(Z_1^N)$. The expression for the asymptotic (in $M$) variance then follows from the law of total variance:
\begin{align*}
\textrm{Var}(Z_m^N)&=\Ebb{\text{Var}(Z_m^N|\theta^m)}+\text{Var}(\Ebb{Z_m^N|\theta^m})=\Ebb{V_{x,\times}^N(\theta^m)}+\text{Var}((\cal{M}\varphi)(\theta^m))\\
&=\mu_0(V_{x,\times}^N)+\mu_0([\cal{M}\varphi-\mu(\varphi)]^2)\quad\forall m\in [M],
\end{align*}
where, for each $\theta$ in $\Theta$, $V_{x,\times}^N(\theta)$ denotes   the RHS of~\eqref{eq:vark} with  $\cal{M}(\theta,dx)$ replacing $\mu(dx)$ therein. Lastly, using once again the independence of $Z_1^{N},\dots,Z_M^{N}$, we obtain the variance expressions:
\begin{align}\label{eq:mfeu8ahfdsadsaby8abegfyag}
\text{Var}(\mu_{\times}^{M,N}(\varphi))=\frac{1}{M^2}\sum_{m=1}^M\textrm{Var}(Z_m^N)=\frac{\mu_0(V_{x,\times}^N)+\mu_0([\cal{M}\varphi)-\mu(\varphi)]^2)}{M}.
\end{align}
\end{proof}
Corollary~\ref{cor:ppf} follows easily from Theorem~\ref{thrm:classicalK2} and our variance bounds for product-form estimators:
\begin{proof}[Proof of Corollary~\ref{cor:ppf}]Note that $\mu^{M, N}(\varphi)$ is a particularly simple partially product-form estimator. So Theorem~\ref{thrm:classicalK2} tells us that it is asymptotically (in $M$) normal with asymptotic variance $\sigma^2_N(\varphi):=\mu_0([\cal{M}\varphi-\mu(\varphi)]^2)+N^{-1}\mu_0(\cal{M}[\varphi-\cal{M}\varphi]^2)$ and that its variance equals $\sigma^2_{N}(\varphi)/M$. Given that $\text{Var}(\mu^{N,M}_\times(\varphi))=\sigma^2_{\times,N}(\varphi)/M$, we need only show that $\sigma^2_{\times,N}(\varphi)\leq \sigma^2_{N}(\varphi)$. However, by~\eqref{eq:mfeu8ahfdsadsaby8abegfyag},
\begin{align*}\sigma^2_{\times,N}(\varphi)- \sigma^2_{N}(\varphi)=\mu_0(V_{x,\times}^N)-\frac{\mu_0(\cal{M}[\varphi-\cal{M}\varphi]^2)}{N}=\mu_0\left(V_{x,\times}^N-\frac{\cal{M}[\varphi-\cal{M}\varphi]^2}{N}\right),\end{align*}
Theorem~\ref{thrm:classicalK} and Corollary~\ref{cor:varbound} imply that $V_{x,\times}^N$ is (pointwise) bounded above by $\cal{M}[\varphi-\cal{M}\varphi]^2$, and the result follows.
\end{proof}
\section{Proof of Theorem~\ref{thrm:mvuppf}}\label{app:ppfmvu}
In this appendix we use colon notation analogous to that introduced in Appendix~\ref{app:mvu}. This proof is a conceptually straightforward (although notationally arduous) extension of that given for Theorem~\ref{thrm:mvu} in Appendix~\ref{app:mvu}, and we only sketch it. In short, one needs to exploit the two symmetries present in the problem---i.e.\
\begin{enumerate}
\item the law of $(\theta^1,X^{1,1:N}),\dots,(\theta^M,X^{M,1:N})$ is invariant to permutations of the labels $1,...,M$,
\item for each $m$, the law of $X^{m,1},\dots,X^{m,N}$ is invariant to component-wise swaps of the entries of these tuples,
\end{enumerate}
---in much the same way as the second of these symmetries was exploited in Appendix~\ref{app:mvu}.
First off, Lemma~\ref{lem:sym} generalizes as follows.
\begin{lemma}\label{lem:symppf}
Suppose that $f$ is a measurable real-valued  function on $(\Theta\times S^N)^M$ satisfying
$$(\mu_0\otimes\cal{M}^N)^M(f)=0$$
for all partially factorized $\mu=\mu_0\otimes \cal{M}$ on $\Theta\times S$ satisfying~\eqref{eq:continuous} and $\mu(\mmag{\varphi})<\infty$ for some given measurable $\varphi:\Theta\times S\to\r$. It is symmetric in the sense of 1-2 above if and only if $f(\theta^{1:M},x^{1:M,1:N}_{1:K})=0$ for all $\theta^{1:M}$ in $\Theta^{M}$ and $x^{1:M,1:N}_{1:K}$ in $S^{NM}$ such that $\theta^i\neq\theta^j$ for all $i\neq j$.
\end{lemma}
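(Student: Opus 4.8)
The plan is to mirror the two-layer structure of Appendix~\ref{app:mvu}'s proof of Lemma~\ref{lem:sym}: peel off the inner symmetry (the component-wise swaps within each block, item~2) using the discrete-measure/polynomial machinery already developed there, and then treat the outer symmetry (the relabelling of the $M$ blocks, item~1) together with the atomless parameter distribution $\mu_0$ by a localization argument. The reverse implication is immediate and I would dispatch it first: if $f$ vanishes on every tuple with $\theta^i\neq\theta^j$ for $i\neq j$, then both sides of each symmetry relation in 1--2 are evaluated at such tuples (relabelling the blocks and swapping components preserve distinctness of the $\theta^m$), so the relations hold there trivially. The substance is the forward direction.

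For the inner layer I would fix prescribed distinct points $\bar\theta^1,\dots,\bar\theta^M$ in $\Theta$ and target points $\bar x_k^{m,n}\in S_k$, and feed the integral hypothesis kernels of the form $\cal{M}_k(\theta,\cdot):=\sum_{n=1}^N w_k^{m,n}\,\delta_{\bar x_k^{m,n}}$ on a measurable set containing $\bar\theta^m$, paired with an arbitrary atomless $\mu_0$. Integrating out the $x$-variables turns $(\mu_0\otimes\cal{M}^N)^M(f)=0$ into a polynomial identity in the weights $w_k^{m,n}$ whose coefficients are $\mu_0^M$-integrals over the $\theta$-blocks of symmetric-in-$x$ combinations of $f$. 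Exactly as in Lemma~\ref{lem:sym}, rescaling the weights shows the polynomial vanishes on the positive orthant, so every coefficient vanishes; the inner symmetry~2 then collapses each coefficient to a single evaluation of $f$, leaving identities $\int_{\Theta^M}[\,f\text{ at the prescribed }x\text{-points}\,]\,d\mu_0^M=0$ valid for every atomless $\mu_0$.

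The genuinely new difficulty, and where the hypotheses that $\cal{T}$ contains all singletons and that~\eqref{eq:continuous} holds must both be spent, is the outer layer: these identities pin down $\theta^{1:M}\mapsto f(\theta^{1:M},\cdot)$ only through its integrals against all atomless product measures $\mu_0^M$, and such integrals do not determine a merely measurable integrand at a single prescribed point. To isolate the tuple $(\bar\theta^1,\dots,\bar\theta^M)$ I would arrange the kernels so that their discrete $x$-structure is constant on disjoint measurable sets $A_1,\dots,A_M$ with $\bar\theta^m\in A_m$ (the $\bar\theta^m$ being distinct, and singletons measurable), each chosen to carry positive mass under a suitable atomless $\mu_0$, and then drive $\mu_0$ through a net of atomless measures concentrating on $A_1,\dots,A_M$ while using the block-relabelling symmetry~1 to symmetrize over which $\theta^m$ lands in which $A_m$. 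The role of~\eqref{eq:continuous}, equivalently~\eqref{eq:continuous1}, is to ensure that along this net the off-diagonal event $\{\theta^i\neq\theta^j\ \forall i\neq j\}$ retains full $\mu_0^M$-mass, so that the localized integrals report $f$ on the distinct-tuple region and never on the diagonal, where (cf.\ Remark~\ref{rem:ppfcaveat}) no information can be extracted.

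I expect this localization to be the main obstacle: the $x$-coordinates are handled pointwise by the polynomial argument without difficulty, so all of the work is in upgrading the $\theta$-integral conditions to a statement holding \emph{at} the prescribed distinct tuple rather than merely for $\mu_0^M$-almost every distinct tuple. Making the concentrating net, the disjoint-neighbourhood construction afforded by the singleton assumption, and the label symmetry~1 combine to single out each individual distinct $\theta$-tuple is the delicate point; once it is in place, letting $\bar\theta^{1:M}$ and $\bar x^{1:M,1:N}_{1:K}$ range over all admissible choices yields the claim, and Theorem~\ref{thrm:mvuppf} then follows by symmetrizing an arbitrary unbiased $f$ over both symmetries and applying Lemma~\ref{lem:symppf} to the difference with $\mu^{M,N}_\times(\varphi)$, exactly as in Appendix~\ref{app:mvu}.
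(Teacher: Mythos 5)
Your inner layer reproduces the paper's machinery faithfully, but your outer layer diverges from the paper at exactly the point you flag as delicate, and there the proposal breaks. The paper does \emph{not} localize with atomless measures at all: it takes $\mu_0:=\sum_{r=1}^M w_0^r\delta_{\theta^r}$, purely atomic on the prescribed distinct points, and builds the kernels as measurable step functions, $\cal{M}_k(\theta,dx_k):=\sum_{l=1}^N w_k^{l}(\theta)\delta_{x_k^{l}(\theta)}(dx_k)$ with $x_k^{1:N}(\theta):=\sum_{r=1}^M x_k^{r,1:N}1_{\{\theta^r\}}(\theta)$ and likewise for the weights --- this is where the singleton assumption is spent (measurability of these step functions, which are well defined because the $\theta^r$ are distinct), not on disjoint neighbourhoods. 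The hypothesis then yields a \emph{single} polynomial identity in all the weights $(w_0^{1:M},w_{1:K}^{1:M,1:N})$ jointly; rescaling kills the polynomial on the positive orthant, the coefficient of one monomial is extracted, and \emph{both} symmetries 1--2 equate all terms of that coefficient, giving $f(\theta^{1:M},x^{1:M,1:N}_{1:K})=0$ at the prescribed tuple in one stroke. There is no concentrating net and no almost-everywhere intermediate step.

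The gap in your route is not just that the localization is hard --- it is impossible under the atomless-only reading of the hypothesis that you adopt. Take $f$ to be the indicator of the event $\{\theta^1,\dots,\theta^M\}=\{\bar\theta^1,\dots,\bar\theta^M\}$ for a fixed set of $M$ distinct points (no $x$-dependence). This $f$ is measurable (singletons are measurable), symmetric in the sense of 1--2, and satisfies $(\mu_0\otimes\cal{M}^N)^M(f)=0$ for \emph{every} atomless $\mu_0$ and every kernel $\cal{M}$, since the event is a finite union of $\mu_0^M$-null sets; yet $f=1$ at the distinct tuple $\bar\theta^{1:M}$. Hence no net of atomless measures, however cleverly concentrated on $A_1,\dots,A_M$ and symmetrized over labels, can upgrade the integral identities to the pointwise conclusion: the step you defer is unfillable, and at best you would obtain an almost-everywhere version of the lemma (which, incidentally, would still suffice for the variance comparison in Theorem~\ref{thrm:mvuppf}, but is strictly weaker than the stated claim). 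The paper's proof implicitly requires the integral hypothesis for \emph{atomic} $\mu_0$ as well, notwithstanding the \eqref{eq:continuous} clause in the lemma's statement (which matters only for the reverse implication and Remark~\ref{rem:ppfcaveat}); by taking that clause at face value you denied yourself the one tool --- Dirac masses at the prescribed $\theta^r$ --- that can evaluate a merely measurable $f$ at a point.
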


\begin{proof}Similar to the proof of Lemma~\ref{lem:sym}: because, as explained in Remark~\ref{rem:ppfcaveat},~\eqref{eq:continuous} implies that no $\theta^m$s coincide, the reverse direction is immediate. For the forward direction,  we need to show that, for any given $\theta^{1:M}$ in $\Theta^M$ and $x^{1:M,1:N}_{1:K}$ in $S^{MN}$ with $\theta^i\neq\theta^j$ for all $i\neq j$,
\begin{equation}\label{eq:fwmua9ifnwuanaf}
f(\theta^{1:M},x^{1:M,1:N}_{1:K})=0.
\end{equation}
To do so, pick any $w_0^{1:M}$ in $\cal{P}([M])$ and $w_1^{1,1:N},\dots,w_1^{M,1:N},\dots,w_K^{1,1:N},\dots,w_K^{M,1:N}$ in $\cal{P}([N])$, where $\cal{P}([M])$ and $\cal{P}([N])$ denote $(M-1)$- and $(N-1)$-dimensional probability simplexes (with associated Borel sigma-algebras).  Because $\cal{T}$ contains all singletons, it is straightforward to check that the functions   $x_{1}^{1:N}:\Theta\to S_1^N$, $\dots$, $x_{K}^{1:N}:\Theta\to S_K^N$ and $w_{1}^{1:N}:\Theta\to\cal{P}([N])$, $\dots$, $w_{K}^{1:N}:\Theta\to\cal{P}([N])$ defined by
$$x_k^{1:N}(\theta):=\sum_{r=1}^Mx_k^{r,1:N}1_{\{\theta^r\}}(\theta),\quad w_k^{1:N}(\theta)=\sum_{r=1}^Mw_k^{r,1:N}1_{\{\theta^r\}}(\theta),\quad\forall \theta\in\Theta,\enskip k\in[K],$$
are measurable. Because we have chosen $\theta^{1:M}$ so that none coincide, the functions satisfy%
$$x_k^{1:N}(\theta^r)=x_k^{r,1:N},\quad w_k^{1:N}(\theta^r)=w_k^{r,1:N},\quad \forall r\in[M],\enskip k\in [K].$$
Next, define
$$\mu_0(d\theta):=\sum_{r=1}^M w_0^{r}\delta_{\theta^{r}}(d\theta),\qquad \cal{M}_k(\theta,dx_k):=\sum_{l_k=1}^N w_{k}^{l_k}(\theta)\delta_{x_{k}^{l_k}(\theta)}(dx_k)\quad\forall k\in[K];$$
in which case~\eqref{eq:mge7r8hagyega} becomes 
\begin{align*}
0=&(\mu_0\otimes \cal{M}^N)^M(f)=\left(\mu_0\otimes\left(\prod_{n=1}^N\prod_{k=1}^K \sum_{l_k^n=1}^N w_{k}^{l_k^n}(\theta)\delta_{x_{k}^{l_k^n}(\theta)}\right)\right)^M(f)\\
=&\dots=\left(\mu_0\otimes\left(\sum_{l_{1:K}^{1:N}\in[N]^{KN}}\prod_{n=1}^N\prod_{k=1}^K w_{k}^{l_k^n}(\theta)\delta_{x^{l^n_k}_k(\theta)}\right)\right)^M(f)\\
=&\left(\sum_{r=1}^M\sum_{l_{1:K}^{1:N}\in[N]^{KN}}w_0^r\delta_{\theta^r}\times\prod_{n=1}^N\prod_{k=1}^K w_{k}^{r,l_k^n}\delta_{x^{r,l^n_k}_k}\right)^M(f)\\
=&\sum_{r_{1:M}\in[M]^M}\sum_{l_{1:K}^{1:N}\in[N]^{KN}}\Bigg[\left(\prod_{m=1}^Mw_0^{r_m}\prod_{n=1}^N\prod_{k=1}^K w_{k}^{r_m,l_k^n}\right)\\
&\qquad\qquad\qquad\times f((\theta^{r_m},x^{r_m,l^1_1}_1,\dots,x^{r_m,l^N_1}_1,\dots,x^{r_m,l^1_K}_K,\dots,x^{r_m,l^N_K}_K)_{m=1}^M)\Bigg]
=:g(w_0^{1:M},w_{1:K}^{1:M,1:N}),
\end{align*}
which holds for all  $w_0^{1:M}$ in  $\cal{P}([M])$ and $w_{1}^{1,1:N},\dots,w_{K}^{1,1:N}$, $\dots$, $w_{1}^{M,1:N},\dots,w_{K}^{M,1:N}$ in  $\cal{P}([N])$. The same type of argument as before then shows that $g$ is zero on the positive orthant $\r^{M(1+NK)}_{>0}$:  for any $z_0^{1:M}$ in $\r^M$ and  $z_{1:K}^{1:M,1:N}$ in $\r^{MNK}_{>0}$, set 
$$w_0^r:= \frac{z_0^{r}(s_1^r\dots s_K^r)^N}{s_0}\enskip\text{and}\enskip w_k^{r,1:N}:=\frac{z_k^{r,1:N}}{s_k^r},\quad\text{where} \quad  s_0:=\sum_{r=1}^Mz_0^r(s_1^r\dots s_K^r)^N\enskip\text{and}\enskip s_k^r:=\sum_{l=1}^Nz_k^{r,l},$$
in which case
\begin{align*}
\left(\prod_{m=1}^Mz_0^{r_m}\prod_{n=1}^N\prod_{k=1}^K z_{k}^{r_m,l_k^n}\right)=\left(\prod_{m=1}^Mz_0^{r_m}(s_1^{r_m}\dots s_K^{r_m})^N\prod_{n=1}^N\prod_{k=1}^K w_{k}^{r_m,l_k^n}\right)=s_0^M\left(\prod_{m=1}^Mw_0^{r_m}\prod_{n=1}^N\prod_{k=1}^K w_{k}^{r_m,l_k^n}\right);
\end{align*}
whence $g(z_0^{1:M},z_{1:K}^{1:M,1:N})=s_0^Mg(w_0^{1:M},w_{1:K}^{1:M,1:N})=0$. Just as before, it follows that $g$ must be zero everywhere. This time around, $g$ features monomials of the type 
\begin{equation}\label{eq:monppf}\prod_{r=1}^M[w_0^r]^{d_0^r}\prod_{l=1}^N\prod_{k=1}^K[w_k^{r,l}]^{d_k^{r,l}}\end{equation}
with
\begin{equation}\label{eq:degppf}\sum_{r=1}^Md_0^r=M,\quad\sum_{r=1}^M\sum_{l=1}^Nd_k^{r,l}=MN \enskip\forall k\in[K].\end{equation}
The coefficient $c$ of~\eqref{eq:monppf} in $g$ is
\begin{equation}\label{eq:mgeau9gnusadanga}c=\sum_{r_{1:M},l^{1:M,1:N}_{1:K}}f((\theta^{r_m},x^{r_m,l^1_1}_1,\dots,x^{r_m,l^N_1}_1,\dots,x^{r_m,l^1_K}_K,\dots,x^{r_m,l^N_K}_K)_{m=1}^M),\end{equation}
where the sum is taken over all indices $r_{1:M}$ in $[M]^M$ and $l^{1:N}_{1:K}$ in $[N]^{NK}$ satisfying
\begin{align*}
\mmag{\{m\in[M]:r_m=r\}}=d_0^r,\quad \mmag{\{(m,n)\in[M]\times [N]:(r_m,l_k^n)=(r,l)\}}=d_k^{r,l},
\end{align*}
for all $r$ in $[M]$, $l$ in $[N]$, and $k$ in $[K]$. The symmetries in $f$ then imply that all terms in~\eqref{eq:mgeau9gnusadanga}'s right-hand side are the same, so we find that 
$$f((\theta^{r_m},x^{r_m,l^1_1}_1,\dots,x^{r_m,l^N_1}_1,\dots,x^{r_m,l^1_K}_K,\dots,x^{r_m,l^N_K}_K)_{m=1}^M)=0.$$
Because $r_m:=m$ for all $m$ in $[M]$ and $l_k^n:=n$ for all $n$ in $[N]$ and $k$ in $[K]$ satisfy~\eqref{eq:degppf}, the above implies \eqref{eq:fwmua9ifnwuanaf}.
\end{proof}
The rest of the argument entails an  application of the Cauchy-Schwartz inequality analogous to that in Appendix~\ref{app:mvu}. (Only that, this time, we `symmetrize' a generic estimator $f$ using
\begin{align*}\frac{1}{M!(N!)^{MK}}\sum_{\upsilon,\tau_{1:K}^{1:M}}f((\theta^{\upsilon(m)},x_1^{\upsilon(m),\tau_1^m(1)},\dots,x_1^{\upsilon(m),\tau_1^m(N)},\dots,x_K^{\upsilon(m),\tau_K^m(1)},\dots,x_K^{\upsilon(m),\tau_K^m(N)})_{m=1}^M),
\end{align*}
where the sum is taken over all permutations $\upsilon$ of $[M]$ and $\tau_1^1,\dots,\tau_K^1,\dots,\tau_1^M,\dots,\tau_K^M$  of $[N]$.)

\section{Proof of Corollary~\ref{cor:pmh}}\label{app:pmh}

This corollary follows from Theorem~10 and Remark 12 in \cite{Andrieu2016} and the fact that, for all $\theta$ in $\Theta$ and $N>0$, $\pi^N_\times(\theta)$ is bounded above by  $\pi^N(\theta)$ in the convex order:
\begin{equation}\label{eq:convexorder}
\Ebb{f(\pi^N_\times(\theta))}\leq \Ebb{f(\pi^N(\theta))}
\end{equation}
for all convex functions $f:\r\to\r$. To argue the above, fix any such $f$, $\theta$, and $N$, and note that
\begin{align*}\pi^N_\times(\theta)&=\frac{1}{N^K}\sum_{\bm{n}\in[N]^K}w(\theta,X^{n_1}_1,X^{n_2}_2,\dots,X^{n_K}_K)\\
&=\frac{1}{N^{K-1}}\sum_{\bm{m}\in[N]^{K-1}}\underbrace{\frac{1}{N}\sum_{n=1}^Nw(\theta,X^{n}_1,X^{(n+m_1\text{ mod }N)+1}_2,\dots,X^{(n+m_{K-1}\text{ mod }N)+1}_K)}_{=:Y^N_{\bm{m}}}.
\end{align*}
Because, for any $\bm{m}$ in $[N]^{K-1}$,  $(X^{m_1}_2,\dots,X^{m_{K-1}}_K)$ has law $\pi(dx_2,\dots,dx_K)$ and is independent of $(X_1^1,\dots,X_1^N)$, $(Y^N_{\bm{m}})_{\bm{m}\in[N]^{K-1}}$ is a collection of identically distributed random variables.
Moreover, by definition $\pi^N(\theta)=Y^N_{(N-1,\dots, N-1)}$, so $(Y^N_{\bm{m}})_{\bm{m}\in[N]^{K-1}}$ all share the same distribution as $\pi^N(\theta)$ and~\eqref{eq:convexorder} follows from Jensen's inequality:
\begin{align*}
\Ebb{f(\pi^N_\times(\theta))}&=\Ebb{f\left(\frac{1}{N^{K-1}}\sum_{\bm{n'}\in[N]^{K-1}}Y^N_{\bm{m}}\right)}\leq \frac{1}{N^{K-1}}\sum_{\bm{m}\in[N]^{K-1}}\Ebb{f(Y^N_{\bm{m}})}\\
&=\frac{1}{N^{K-1}}\sum_{\bm{m}\in[N]^{K-1}}\Ebb{f(\pi^N(\theta))}=\Ebb{f(\pi^N(\theta))}.
\end{align*}
\section{Estimators for targets that are mixtures of product-form distributions}\label{sec:mop}Suppose that our target $\mu$ is not a product-form distribution but a mixture of several:
\begin{equation}\label{eq:moptarget}\mu:=\sum_{i=1}^I\theta_i\mu^i,\end{equation}
where the mixture weights $\theta_1,\ldots,\theta_I>0$ satisfy $\sum_{i=1}^I\theta_i=1$ and, for each $i$ in $[I]$, $\mu^i$ is the product $\prod_{k=1}^{K_i}\mu^i_{k}$  of distributions $\mu_1^i,\ldots,\mu_{K_i}^i$ respectively defined on
$$(S_{k}^i,\cal{S}_{k}^i):=\left(\prod_{k'\in \cal{K}_k^i}S_{k'},\prod_{k\in \cal{K}_1^i}\cal{S}_{k'}\right)\quad\forall k=1,\dots,K_i,$$%
for some given partitions $\{\cal{K}_1^1,\dots,\cal{K}_{K_1}^1\},\dots,\{\cal{K}_1^I,\dots,$ $\cal{K}_{K_I}^I\}$  of $[K]$. Notice that, to further broaden the applicability of the ensuing estimators, we allow for mixture components that are products over some but not all dimensions (in the case where every mixture component does factorize fully, $K_i=K$ and $\cal{K}_k^i=\{k\}$ for each $i$ in $[I]$ and $k$ in $[K]$). 
Fix a square $\mu$-integrable test function $\varphi$ and suppose we wish to compute $\mu(\varphi)$. It is well-known~\cite{Oh1993,Hesterberg1995} that the basic Monte Carlo estimator, 
$$\mu^N(\varphi):=\frac{1}{N}\sum_{n=1}^N\varphi(X^n)$$ 
where $X^1,\dots,X^N$ denote i.i.d.\ samples drawn from $\mu$, is outperformed by its stratified variant:
\begin{equation}\label{eq:stratified}
\mu^N_{s}(\varphi):=\sum_{i=1}^I\frac{\theta_i}{N_i}\sum_{n=1}^{N_i}\varphi(X^{i,n})=:\sum_{i=1}^I\theta_i\mu^{i,N_i}(\varphi),\end{equation}
where $(X^{1,1},\dots, X^{1,N_1}),\dots,(X^{I,1},\dots, X^{I,N_I})$ are $I$ independent sequences of i.i.d.\ samples respectively drawn from $\mu^1,\dots,\mu^I$ and  $N_1=\theta_1 N,\dots,N_I=\theta_I N$ for some $N>0$ (out of convenience, we are assuming that these are integers). In particular, $\mu^N_{s}(\varphi)$ is a consistent, unbiased, and asymptotically normal estimator for $\mu(\varphi)$ whose variance is bounded above by that of $\mu^N(\varphi)$:
\begin{align*}\text{Var}(\mu^N_{s}(\varphi))&=\sum_{i=1}^I\theta_i^2\text{Var}(\mu^{i,N_{i}}(\varphi))=\sum_{i=1}^I\theta_i^2\frac{\mu^i([\varphi-\mu^i(\varphi)]^2)}{N_i}=\frac{\sum_{i=1}^I\theta_i\mu^i([\varphi-\mu^i(\varphi)]^2)}{N}\\
&=\frac{\sum_{i=1}^I\theta_i\mu^i(\varphi^2)-\sum_{i=1}^I\theta_i\mu^i(\varphi)^2}{N}\leq \frac{\mu(\varphi^2)-\mu(\varphi)^2}{N}
=\frac{\sigma^2(\varphi)}{N}=\text{Var}(\mu^N(\varphi))\quad\forall N>0.\end{align*}
Given our assumption that the distributions in the mixture are product-form, we easily improve on $\mu^N_s(\varphi)$ using the product-form analogues of $\mu^{1,N_1}(\varphi),\dots, \mu^{I,N_I}(\varphi)$: 
\begin{equation}
\label{eq:mopest}\mu^N_{s,\times}(\varphi):=\sum_{i=1}^I\frac{\theta_i}{N_i^{K_i}}\sum_{\bm{n^i}\in[N_i]^{K_i}}\varphi(X^{i,\bm{n^i}})=:\sum_{i=1}^I\theta_i\mu^{i,N_i}_\times(\varphi),\end{equation}
is more statistically efficient than $\mu^N_s(\varphi)$. In particular:
\begin{corollary}\label{cor:MOP}If $\varphi$ is $\mu$-integrable, then $\mu^N_{s,\times}(\varphi)$ in~\eqref{eq:mopest} is an unbiased estimator for $\mu(\varphi)$. If $\varphi$ belongs to $L^2_\mu$ and $N_1,\dots,N_I$ are integers proportional to $N$ (i.e.\ $N_1=\alpha_1 N,\dots,N_I=\alpha_I N$ for some $\alpha_1,\dots,\alpha_I>0$), then $\mu^N_{s,\times}(\varphi)$  is strongly consistent and asymptotically normal with variance
\begin{align*}
  &\text{Var}(\mu^N_{s,\times}(\varphi))=\sum_{i=1}^I\theta_i^2\text{Var}(\mu^{i,N_i}_\times(\varphi))=\sum_{i=1}^I\theta_i^2\left(\sum_{\emptyset\neq A\subseteq [K_i]}\frac{\sum_{B\subseteq A}(-1)^{\mmag{A}-\mmag{B}}[\sigma_{A,B}^i(\mu_{A^c}^i(\varphi))]^2}{\alpha_i^{\mmag{A}}N^{\mmag{A}}}\right)
\end{align*}
for all $N>0$, and asymptotic variance 
$$\sigma^2_{s,\times}(\varphi)=\sum_{i=1}^I\frac{[\theta_i\sigma^i_\times(\varphi)]^2}{\alpha_i},$$
where, for all $\psi$ in $ L_{\mu^i_A}^2$, $B\subseteq A\subseteq[K_i]$, and $i$ in $[I]$,
\begin{align*}[\sigma_{A,B}^i(\psi)]^2:&=\mu_A^i([\mu_{A\backslash B}^i(\psi)-\mu_A^i(\psi)]^2),\\
[\sigma^i_\times(\varphi)]^2:&=\sum_{k=1}^{K_i}\mu^i_k([\mu^i_{[K_i]\backslash\{k\}}(\varphi)-\mu^i(\varphi)]^2).\end{align*}
%
If, in particular, $\alpha_1=\theta_1,\dots,\alpha_I=\theta_I$, then the variances of $\mu^N_{s,\times}$ are bounded above by those of  $\mu_s^N$:  
\begin{align*}
\text{Var}(\mu^N_{s,\times}(\varphi))&=\sum_{i=1}^I\theta_i^2\text{Var}(\mu^{i,N_i}_\times(\varphi))\leq \sum_{i=1}^I\theta_i^2\text{Var}(\mu^{i,N_i}(\varphi))=\text{Var}(\mu^N_{s}(\varphi))\quad\forall N>0,\\
\sigma^2_{s,\times}(\varphi)&=\sum_{i=1}^I\theta_i[\sigma^i_\times(\varphi)]^2\leq\sum_{i=1}^I\theta_i^2\mu^i([\varphi-\mu^i(\varphi)]^2)=\lim_{N\to\infty} N\text{Var}(\mu^N_{s}(\varphi)).
\end{align*}
\end{corollary}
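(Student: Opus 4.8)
The plan is to treat $\mu^N_{s,\times}(\varphi)=\sum_{i=1}^I\theta_i\mu^{i,N_i}_\times(\varphi)$ as a fixed $\theta$-weighted sum of $I$ \emph{independent} product-form estimators and to reduce every assertion to the corresponding statement in Theorem~\ref{thrm:classicalK} or Corollary~\ref{cor:varbound} applied componentwise to each $\mu^i=\prod_{k=1}^{K_i}\mu^i_k$. The preliminary observation is that integrability transfers to each component: since $\theta_i>0$ and $\mu=\sum_i\theta_i\mu^i$, we have $\mu^i(\mmag{\varphi})\leq\theta_i^{-1}\mu(\mmag{\varphi})$ and $\mu^i(\varphi^2)\leq\theta_i^{-1}\mu(\varphi^2)$, so $\varphi$ is $\mu^i$-integrable (resp.\ belongs to $L^2_{\mu^i}$) whenever it is $\mu$-integrable (resp.\ belongs to $L^2_\mu$). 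In the latter case Theorem~\ref{thrm:classicalK} gives $\mu^i_{A^c}(\varphi)\in L^2_{\mu^i_A}$ for each $A\subseteq[K_i]$, so the quantities $[\sigma^i_{A,B}(\cdot)]^2$ and $[\sigma^i_\times(\varphi)]^2$ appearing in the statement are well-defined.

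With this in hand, unbiasedness, consistency, and the finite-sample variance formula are immediate. Unbiasedness reads $\Ebb{\mu^N_{s,\times}(\varphi)}=\sum_i\theta_i\Ebb{\mu^{i,N_i}_\times(\varphi)}=\sum_i\theta_i\mu^i(\varphi)=\mu(\varphi)$ by the unbiasedness in Theorem~\ref{thrm:classicalK}. Strong consistency follows by letting $N\to\infty$, so that each $N_i=\alpha_iN\to\infty$, applying the almost-sure limit~\eqref{eq:lln} to each of the finitely many summands, and summing. Because the sample sequences $X^{1,\cdot},\dots,X^{I,\cdot}$ are drawn independently, the estimators $\mu^{i,N_i}_\times(\varphi)$ are independent across $i$, whence $\text{Var}(\mu^N_{s,\times}(\varphi))=\sum_i\theta_i^2\text{Var}(\mu^{i,N_i}_\times(\varphi))$; substituting the variance expression~\eqref{eq:vark} for each component and using $N_i^{\mmag{A}}=\alpha_i^{\mmag{A}}N^{\mmag{A}}$ yields exactly the displayed formula.

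The one step requiring genuine care is the asymptotic normality, since the $I$ separate central limit theorems must be recombined at a single $\sqrt{N}$ scale. I would write $N^{1/2}[\mu^N_{s,\times}(\varphi)-\mu(\varphi)]=\sum_{i=1}^I\theta_i\alpha_i^{-1/2}\,N_i^{1/2}[\mu^{i,N_i}_\times(\varphi)-\mu^i(\varphi)]$, using $N^{1/2}=\alpha_i^{-1/2}N_i^{1/2}$. By~\eqref{eq:clt} each factor $N_i^{1/2}[\mu^{i,N_i}_\times(\varphi)-\mu^i(\varphi)]$ converges in distribution to $\cal{N}(0,[\sigma^i_\times(\varphi)]^2)$; because the $I$ sequences are independent, the joint law of the vector of these factors converges to a vector of independent Gaussians (characteristic functions factorize), and the continuous mapping theorem applied to the linear map $(z_1,\dots,z_I)\mapsto\sum_i\theta_i\alpha_i^{-1/2}z_i$ gives convergence to $\cal{N}(0,\sum_i\theta_i^2\alpha_i^{-1}[\sigma^i_\times(\varphi)]^2)=\cal{N}(0,\sigma^2_{s,\times}(\varphi))$, matching the stated asymptotic variance. (The minor bookkeeping that $\alpha_iN$ be an integer is dispatched by the integrality assumption already made in the text.)

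Finally, the two variance bounds in the case $\alpha_i=\theta_i$ follow termwise from Corollary~\ref{cor:varbound} applied to each $\mu^i$: that corollary gives $\text{Var}(\mu^{i,N_i}_\times(\varphi))\leq\text{Var}(\mu^{i,N_i}(\varphi))$ and $[\sigma^i_\times(\varphi)]^2\leq\mu^i([\varphi-\mu^i(\varphi)]^2)$. Summing the first against the weights $\theta_i^2$ reproduces $\text{Var}(\mu^N_s(\varphi))$; summing the second against $\theta_i$ (the value of $\theta_i^2/\alpha_i$ when $\alpha_i=\theta_i$) gives the asymptotic bound, whose right-hand side equals $\lim_{N\to\infty}N\,\text{Var}(\mu^N_s(\varphi))$ by the variance computation recorded just above the corollary. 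I expect no real obstacle anywhere; the only place demanding attention is the rescaling-and-independence argument in the asymptotic normality step, everything else being a direct componentwise transcription of the product-form results.
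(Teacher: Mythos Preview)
Your proposal is correct and follows essentially the same approach as the paper, which simply says the result ``follows from Theorem~\ref{thrm:classicalK}, Corollary~\ref{cor:varbound}, and the continuous mapping theorem.'' You have merely spelled out the details that the paper leaves implicit: the componentwise transfer of integrability, the independence across $i$ giving the variance decomposition, the $\sqrt{N}=\alpha_i^{-1/2}\sqrt{N_i}$ rescaling combined with joint weak convergence of independent sequences and the continuous mapping theorem for the CLT, and the termwise application of Corollary~\ref{cor:varbound} for the bounds.
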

\begin{proof}Follows from Theorem~\ref{thrm:classicalK}, Corollary~\ref{cor:varbound}, and the continuous mapping theorem.
\end{proof}
Employing a Lagrange multiplier as in~\cite[p.153]{Asmussen2007}, we find that  $N_i^*\propto \theta_i\sigma^i_\times(\varphi)$ for all $i$ in $[I]$ 
achieve the smallest possible asymptotic variance ($\sum_{i=1}^I\theta_i\sigma^i_\times(\varphi)$) for fixed $N=\sum_{i=1}^I N_i$. 
Of course, $\sigma_\times^1(\varphi)$, $\dots$, $\sigma_\times^I(\varphi)$ are unknown in practice and must be estimated, which would lead to an adaptive scheme similar to those for $\mu^N_s(\varphi)$ in~\cite{Raghavan1998,Etore2010,Douc2007}. 
\end{document}